\pdfoutput=1
\documentclass[10pt]{article}

\evensidemargin=0pt \oddsidemargin=0pt \marginparsep=0pt
\textwidth=6.5in \topmargin=-37pt 
\textheight=9.0in
\usepackage{palatino}
\usepackage{float}

\usepackage{url}
\usepackage{amsmath,amsthm,amssymb,amsbsy}
\usepackage{mathdots}
\usepackage{paralist}
\usepackage{xcolor}
\usepackage{color}
\usepackage{graphicx}
\usepackage{algorithm,algpseudocode}
\usepackage{comment}

\usepackage{fancyhdr}
\usepackage{cite}
\usepackage{cleveref}
\usepackage{enumerate}


\newtheorem{lemma}{Lemma}
\newtheorem{cor}{Corollary}

\newtheorem{theorem}{Theorem}

\theoremstyle{remark}

\newcommand{\R}{\mathbb{R}}

\newcommand{\C}{\mathbb{C}}

\newcommand{\Z}{\mathbb{Z}}
\newcommand{\N}{\mathbb{N}}





\newcommand{\vct}[1]{\boldsymbol{#1}}
\newcommand{\mtx}[1]{\boldsymbol{#1}}


\newcommand{\inner}[1]{\left<#1\right>}



\newcommand{\trace}{\operatorname{trace}}

\newcommand{\rank}{\operatorname{rank}}

\newcommand{\Spec}{\operatorname{Spec}}
%

\newcommand{\set}[1]{\mathcal{#1}}




\DeclareMathOperator*{\maximize}{\text{maximize}}


\newcommand{\floor}[1]{\left\lfloor #1 \right\rfloor}
\newcommand{\ceil}[1]{\left\lceil #1 \right\rceil}
\newcommand{\eps}{\epsilon}
\renewcommand{\j}{\mathbf{i}}


\newcommand{\calB}{\mathcal{B}}

\newcommand{\calI}{\mathcal{I}}

\newcommand{\calT}{\mathcal{T}}

\newcommand{\calBc}{\mathcal{B}}
\newcommand{\calTc}{\mathcal{T}}

\newcommand{\vs}{\vct{s}}

\newcommand{\mA}{\mtx{A}}
\newcommand{\mB}{\mtx{B}}
\newcommand{\mC}{\mtx{C}}
\newcommand{\mD}{\mtx{D}}

\newcommand{\mP}{\mtx{P}}

\newcommand{\mU}{\mtx{U}}
\newcommand{\mV}{\mtx{V}}
\newcommand{\mW}{\mtx{W}}
\newcommand{\mX}{\mtx{X}}
\newcommand{\mY}{\mtx{Y}}

\newcommand{\mId}{{\bf I}}

\newcommand{\setI}{\set{I}}

\setcounter{MaxMatrixCols}{20}

\pagestyle{plain}

\graphicspath{{./figs/}}

\newlength{\imgwidth}
\setlength{\imgwidth}{3.125in}

\newboolean{twoColVersion}
\setboolean{twoColVersion}{false}
\newcommand{\twoCol}[2]{\ifthenelse{\boolean{twoColVersion}} {#1} {#2} }

\newcommand\blfootnote[1]{
  \begingroup
  \renewcommand\thefootnote{}\footnote{#1}
  \addtocounter{footnote}{-1}
  \endgroup
}

\begin{document}
\title{Improved bounds for the eigenvalues of prolate spheroidal wave functions and discrete prolate spheroidal sequences}

\vspace{2mm}
\author{Santhosh Karnik, Justin Romberg, Mark A. Davenport}

\maketitle

\begin{abstract}
The discrete prolate spheroidal sequences (DPSSs) are a set of orthonormal sequences in $\ell_2(\Z)$ which are strictly bandlimited to a frequency band $[-W,W]$ and maximally concentrated in a time interval $\{0,\ldots,N-1\}$. The timelimited DPSSs (sometimes referred to as the Slepian basis) are an orthonormal set of vectors in $\mathbb{C}^N$ whose discrete time Fourier transform (DTFT) is maximally concentrated in a frequency band $[-W,W]$. Due to these properties, DPSSs have a wide variety of signal processing applications. The DPSSs are the eigensequences of a timelimit-then-bandlimit operator and the Slepian basis vectors are the eigenvectors of the so-called prolate matrix. The eigenvalues in both cases are the same, and they exhibit a particular clustering behavior -- slightly fewer than $2NW$ eigenvalues are very close to $1$, slightly fewer than $N-2NW$ eigenvalues are very close to $0$, and very few eigenvalues are not near $1$ or $0$. This eigenvalue behavior is critical in many of the applications in which DPSSs are used. There are many asymptotic characterizations of the number of eigenvalues not near $0$ or $1$. In contrast, there are very few non-asymptotic results, and these don't fully characterize the clustering behavior of the DPSS eigenvalues. In this work, we establish two novel non-asymptotic bounds on the number of DPSS eigenvalues between $\epsilon$ and $1-\epsilon$. Also, we obtain bounds detailing how close the first $\approx 2NW$ eigenvalues are to $1$ and how close the last $\approx N-2NW$ eigenvalues are to $0$. Furthermore, we extend these results to the eigenvalues of the prolate spheroidal wave functions (PSWFs), which are the continuous-time version of the DPSSs. Finally, we present numerical experiments demonstrating the quality of our non-asymptotic bounds on the number of DPSS eigenvalues between $\epsilon$ and $1-\epsilon$.
\end{abstract}

\blfootnote{S. Karnik, J. Romberg, and M. A. Davenport are with the School of Electrical and Computer Engineering, Georgia Institute of Technology, Atlanta, GA, 30332 USA (e-mail: skarnik1337@gatech.edu, jrom@ece.gatech.edu, mdav@gatech.edu). This work was supported by a grant from Lockheed Martin and a gift from the Alfred P. Sloan Foundation.}

\section{Introduction}
\label{sec:Intro}
A fundamental fact of Fourier analysis is that no non-zero signal can be simultaneously bandlimited and timelimited. Thus, a compactly supported non-zero function cannot have a compactly supported Fourier transform, and a non-zero function whose Fourier transform is compactly supported cannot itself be compactly supported. Between 1960 and 1978, Landau, Pollak, and Slepian published a series of seminal papers \cite{SlepianPollakI, LandauPollakII, LandauPollakIII, SlepianIV, SlepianV} exploring to what extent a bandlimited signal can be timelimited and to what extent a timelimited signal can be bandlimited. They formulate both of these questions as eigenproblems involving bandlimiting and timelimiting operators. In the continuous-time case, the eigenfunctions which are bandlimited to the frequency band $[-\Omega,\Omega]$ and are maximally concentrated in the time interval $[-\tfrac{T}{2},\tfrac{T}{2}]$ are known as the prolate spheroidal wave functions (PSWFs). In the discrete-time case, the eigensequences which are bandlimited to frequencies $[-W,W]$ and are maximally concentrated in the time indices $\{0,1,\ldots,N-1\}$ are known as the discrete prolate spheroidal sequences (DPSSs).

By truncating the DPSSs, one can form the Slepian basis vectors, which are an efficient basis for representing a window of samples from bandlimited signals \cite{DavenportWakin12,ZhuWakin17,KarnikFST}. As such, Slepian basis vectors can be used in a variety of applications. Some classic applications include prediction of bandlimited signals based on past samples \cite{SlepianV} and Thomson's multitaper method for spectral analysis \cite{Thomson82}. More recent applications include time-variant channel estimation \cite{Zemen05,Zemen07}, wideband
compressive radio receivers \cite{Davenport10}, compressed sensing of analog signals \cite{DavenportWakin12}, target detection \cite{Yin12,ZhuWakin15}, and a fast method \cite{Matthysen16} for computing Fourier extension series coefficients \cite{Huybrechs10, Adcock14}. 

The eigenvalues associated with the PSWFs as well as the DPSSs (which we will refer to in this paper as the PSWF eigenvalues and the DPSS eigenvalues respectively) exhibit a particular clustering behavior. All of these eigenvalues are strictly between $0$ and $1$. Furthermore, most of these eigenvalues are either very close to $1$ or very close to $0$, and very few of these eigenvalues are not near $1$ or $0$. This clustering behavior plays a critical role in many applications. In \cite{Abreu17}, it is shown that the bias of Thomson's multitaper spectral estimator depends on the sum of the leading DPSS eigenvalues. In \cite{DavenportWakin12}, it is shown that the number of DPSS eigenvalues not near $0$ determines the effective dimension of a vector of samples from a bandlimited signal. Also, the sum of the trailing DPSS eigenvalues bounds the error in approximating a vector of samples from a bandlimited signal by a linear combination of the leading DPSSs. In \cite{Matthysen16,KarnikFST,KarnikSampTA19}, the fact that only a small number of DPSS eigenvalues are not near $1$ or $0$ is exploited to perform fast computations. As such, the behavior of the DPSS eigenvalues is of considerable interest. There are many results (both asymptotic and non-asymptotic) regarding the eigenvalues associated with the PSWFs \cite{SlepianPollakI,LandauPollakIII,LandauWidom80,Hogan11,Osipov13,Israel15,Bonami17,Bonami18}. However, there are far fewer results regarding the DPSS eigenvalues. Furthermore, the existing non-asymptotic results don't fully capture the behavior of the DPSS eigenvalues or the PSWF eigenvalues. 

The main contribution of this work is establishing a novel non-asymptotic bound on the number of DPSS eigenvalues which are not close to $1$ or $0$, as well as non-asymptotic bounds on the DPSS eigenvalues themselves. We show that for any threshold $\eps \in (0,\tfrac{1}{2})$, the number of DPSS eigenvalues which lie in the interval $(\eps,1-\eps)$ is bounded above by a quantity that scales like $O(\log(NW)\log\tfrac{1}{\eps})$. This improved bound is similar to the known asymptotic result and is a significant improvement over existing non-asymptotic bounds which scale like $O(\log(NW)/\eps)$ or $O(\log N \log \tfrac{1}{\eps})$. Furthermore, the fact that our bound depends on the time-bandwidth product $NW$ as opposed to the time duration $N$ allows us to obtain non-asymptotic bounds on the PSWF eigenvalues, which are also similar to the known asymptotic result and a substantial improvement over existing non-asymptotic results.

The rest of this work is organized as follows. In Section~\ref{sec:Preliminaries}, we define DPSSs, Slepian basis vectors, and PSWFs, as well as review their basic properties. In Sections~\ref{sec:PrevDPSSBounds} and \ref{sec:PrevPSWFBounds}, we will outline existing results on the DPSS eigenvalues and the PSWF eigenvalues respectively. In Section~\ref{sec:NewResults}, we state our new results. In Section~\ref{sec:MainProof}, we prove two novel non-asymptotic bounds on the number of DPSS eigenvalues that are not within $\eps$ of $1$ or $0$. In Section~\ref{sec:DPSSEigenvalueBounds}, we use the two new bounds to derive non-asymptotic bounds on the DPSS eigenvalues themselves, as well as the sum of the leading and trailing DPSS eigenvalues. In Section~\ref{sec:PSWFProofs}, we apply the results on DPSS eigenvalues to prove our results on PSWF eigenvalues. Finally, we conclude the paper in Section~\ref{sec:NumericalResults} with some numerical results to demonstrate the quality of our non-asymptotic bounds on the number of DPSS eigenvalues in $(\eps,1-\eps)$.

\section{Mathematical Preliminaries}
\label{sec:Preliminaries}
\subsection{Discrete prolate spheroidal sequences and Slepian basis vectors}
\label{sec:DPSS}
We begin by defining the discrete prolate spheroidal sequences (DPSSs) and Slepian basis vectors and reviewing some of their key properties. For a discrete signal $x \in \ell_2(\Z)$, we define its discrete time Fourier transform (DTFT) $\widehat{x} \in L_2([-\tfrac{1}{2},\tfrac{1}{2}])$ by $$\widehat{x}(f) = \sum_{n = -\infty}^{\infty}x[n]e^{-\j 2\pi fn} \quad \text{for} \quad f \in [-\tfrac{1}{2},\tfrac{1}{2}],$$ where we use the notation $\j = \sqrt{-1}$. The inverse DTFT is given by $$x[n] = \int_{-1/2}^{1/2}\widehat{x}(f)e^{\j 2\pi fn}\,df \quad \text{for} \quad n \in \Z.$$ With these definitions, any $x,x' \in \ell_2(\Z)$ satisfy the Parseval-Plancherel identity $\inner{x,x'}_{\ell_2(\Z)} = \inner{\widehat{x},\widehat{x}'}_{L_2([-1/2,1/2])}$. For any $N \in \N$, we say that $x \in \ell_2(\Z)$ is timelimited to $n \in \{0,\ldots,N-1\}$ if $x[n] = 0$ for $n \in \Z \setminus \{0,\ldots,N-1\}$. Also, for any $W \in (0,\tfrac{1}{2})$, we say that $x \in \ell_2(\Z)$ is bandlimited to $|f| \le W$ if $\widehat{x}(f) = 0$ for $|f| > W$. 

We can now ask the question ``what signal bandlimited to $|f| \le W$ has a maximum concentration of energy over the time indices $n \in \{0,\ldots,N-1\}$?'', i.e., $$\maximize_{x \in \ell_2(\Z)} \sum_{n = 0}^{N-1}|x[n]|^2 \quad \text{subject to} \quad \|x\|_{\ell_2(\Z)}^2 = 1 \quad \text{and} \quad \widehat{x}(f) = 0 \ \text{for} \ |f| > W.$$ 

To help answer this question, we define two self-adjoint operators. For a given $N \in \N$ we define a timelimiting operator $\calT_N : \ell_2(\Z) \to \ell_2(\Z)$ by $$(\calT_N x)[n] = \begin{cases}x[n] & \text{if} \ n \in \{0,\ldots,N-1\} \\ 0 & \text{if} \ n \in \Z\setminus\{0,\ldots,N-1\} \end{cases}.$$ For a given bandwidth parameter $W \in (0,\tfrac{1}{2})$, we define a bandlimiting operator $\calB_W : \ell_2(\Z) \to \ell_2(\Z)$ by $$(\calB_W x)[n] = \sum_{m = -\infty}^{\infty}\dfrac{\sin[2\pi W(m-n)]}{\pi(m-n)}x[m] \quad \text{for} \quad n \in \Z.$$ Note that the DTFT of $\calB_W x$ satisfies $\widehat{\calB_W x}(f) = \widehat{x}(f)$ for $|f| \le W$ and $\widehat{\calB_W x}(f) = 0$ for $|f| > W$. 

For bandlimited signals $x \in \ell_2(\Z)$, we can write $$\sum_{n = 0}^{N-1}|x[n]|^2 = \inner{x,\calT_Nx}_{\ell_2(\Z)} = \inner{\calB_Wx,\calT_N\calB_Wx}_{\ell_2(\Z)} = \inner{x,\calB_W\calT_N\calB_Wx}_{\ell_2(\Z)}.$$ Subject to the constraint $\|x\|_{\ell_2(\Z)}^2 = 1$, this is maximized by the eigensequence of $\calB_W\calT_N\calB_W$ corresponding to the largest eigenvalue. Slepian defined the discrete prolate spheroidal sequences (DPSSs) $s_0,\ldots,s_{N-1} \in \ell_2(\Z)$ as the $N$ orthonormal eigensequences of $\calB_W\calT_N\calB_W$ corresponding to non-zero eigenvalues. The corresponding eigenvalues $1 > \lambda_0 > \lambda_1 > \cdots > \lambda_{N-1} > 0$ are referred to as the DPSS eigenvalues and are sorted in descending order. Note that the notation $s_k$ and $\lambda_k$ hides the dependence on $N$ and $W$. When it is necessary to make this dependence explicit, we will use the expanded notation $s_k(N,W)$ and $\lambda_k(N,W)$ respectively. 

Slepian \cite{SlepianV} showed that these eigenvalues are all distinct and strictly between $0$ and $1$. In addition to $s_0$ being the bandlimited sequence in $\ell_2(\Z)$ with a maximal concentration of energy in $\{0,\ldots,N-1\}$, it is also true that for each $k = 1,\ldots,N-1$, $s_k$ is the bandlimited sequence in $\ell_2(\Z)$ with a maximal concentration of energy in $\{0,\ldots,N-1\}$ subject to the additional constraint of being orthogonal to $s_0,\ldots,s_{k-1}$. Furthermore $\lambda_k$ is equal to the amount of energy $s_k$ has in the time interval $\{0,\ldots,N-1\}$.

We can also ask the question ``what signal timelimited to $n \in \{0,\ldots,N-1\}$ has a maximum concentration of energy in the frequency band $|f| \le W$?'', i.e. $$\maximize_{x \in \ell_2(\Z)}\int_{-W}^{W}\left|\widehat{x}(f)\right|^2\,df \quad \text{subject to} \quad \|x\|_{\ell_2(\Z)}^2 = 1 \quad \text{and} \quad x[n] = 0 \ \text{for} \ n \in \Z\setminus\{0,\ldots,N-1\}.$$

For timelimited signals $x \in \ell_2(\Z)$, we can write $$\int_{-W}^{W}\left|\widehat{x}(f)\right|^2\,df = \inner{\widehat{x},\widehat{\calB_Wx}}_{L_2([-1/2,1/2])} = \inner{x,\calB_Wx}_{\ell_2(\Z)} = \inner{\calT_Nx,\calB_W\calT_Nx}_{\ell_2(\Z)} = \inner{x,\calT_N\calB_W\calT_Nx}_{\ell_2(\Z)}.$$ Since $\calT_N\calB_W\calT_N$ is self-adjoint, the sequence $x \in \ell_2(\Z)$ which solves the above maximization problem is the eigensequence of the operator $\calT_N\calB_W\calT_N$ corresponding to the largest eigenvalue.

Clearly, the range of $\calT_N\calB_W\calT_N$ and the orthogonal complement of the kernel of $\calT_N\calB_W\calT_N$ is the $N$-dimesional space of timelimited signals. Hence, we can reduce this eigenproblem on $\ell_2(\Z)$ to an eigenproblem on $\R^N$. With respect to the Euclidean basis for the space of timelimited signals, the matrix representation of $\calT_N\calB_W\calT_N$ is given by 
\begin{equation}
\mB[m,n] = \dfrac{\sin[2\pi W(m-n)]}{\pi(m-n)} \quad \text{for} \quad m,n = 0,\ldots,N-1. 
\label{eq:Prolate}
\end{equation} 
This matrix $\mB \in \R^{N \times N}$ is known in the literature as the prolate matrix \cite{Varah93,Bojanczyk95}. The Slepian basis vectors $\vs_0,\ldots,\vs_{N-1} \in \R^N$ are the orthonormal eigenvectors of $\mB$, where again the eigenvalues $1 > \lambda_0 > \lambda_1 > \cdots > \lambda_{N-1} > 0$ are sorted in descending order. Note that the eigenvalues of $\mB$ are the same as the eigenvalues of $\calT_N\calB_W\calT_N$, which are the same as the eigenvalues of $\calB_W\calT_N\calB_W$. Hence, we can reuse the notation $\lambda_k$ for $k = 0,\ldots,N-1$ to denote the eigenvalues of $\mB$. The eigensequences $s'_0,\ldots,s'_{N-1} \in \ell_2(\Z)$ of $\calT_N\calB_W\calT_N$ are then given by $s'_k[n] = \vs_k[n]$ for $n \in \{0,\ldots,N-1\}$ and $s'_k[n] = 0$ for $n \in \Z\setminus\{0,\ldots,N-1\}$. Note that in addition to $s'_0$ being the timelimited sequence in $\ell_2(\Z)$ whose DTFT has a maximal concentration of energy in $[-W,W]$, it is also true that for each $k = 1,\ldots,N-1$, $s'_k$ is a timelimited sequence in $\ell_2(\Z)$  whose DTFT has a maximal concentration of energy in $[-W,W]$ subject to the additional constraint of being orthogonal to $s'_0,\ldots,s'_{k-1}$. Furthermore, the eigenvalue $\lambda_k$ is equal to the amount of energy that the DTFT of $s'_k$ has in the frequency band $[-W,W]$.

\subsection{Prolate spheroidal wave functions}
\label{sec:PSWF}
The concentration problems in the previous subsection can also be posed in the continuous case. For continuous signals in $L_2(\R)$, we define the continuous time Fourier transform and inverse Fourier transform by $$\widehat{y}(\omega) = \int_{-\infty}^{\infty}y(t)e^{-\j \omega t}\,dt \quad \text{for} \quad \omega \in \R \quad \text{and} \quad y(t) = \dfrac{1}{2\pi}\int_{-\infty}^{\infty}\widehat{y}(\omega)e^{\j \omega t}\,d\omega \quad \text{for} \quad t \in \R.$$ With these definitions, any $y,y' \in L_2(\R)$ satisfy the Parseval-Plancherel identity $\inner{y,y'}_{L_2(\R)} = \tfrac{1}{2\pi}\inner{\widehat{y},\widehat{y}'}_{L_2(\R)}$. 

Again, for a given bandlimit $\Omega > 0$ and duration $T > 0$, we can ask ``what signal bandlimited to $|\omega| \le \Omega$ has a maximum concentration of energy in the time interval $t \in [-\tfrac{T}{2},\tfrac{T}{2}]$?'', i.e. $$\maximize_{y \in L_2(\R)}\int_{-T/2}^{T/2}\left|y(t)\right|^2\,dt \quad \text{subject to} \quad \|y\|_{L_2(\R)}^2 = 1 \quad \text{and} \quad \widehat{y}(\omega) = 0 \ \text{for} \ |\omega| > \Omega.$$ 

Just as was done with the discrete case, we can define a self-adjoint timelimiting operator $\calTc_T : L_2(\R) \to L_2(\R)$ and a self-adjoint bandlimiting operator $\calBc_{\Omega} : L_2(\R) \to L_2(\R)$ by $$(\calTc_Ty)(t) = \begin{cases} y(t) & \text{if} \ |t| \le \tfrac{T}{2} \\ 0 & \text{if} \ |t| > \tfrac{T}{2}\end{cases} \quad \text{and} \quad (\calBc_{\Omega}y)(t) = \int_{-\infty}^{\infty}\dfrac{\sin[\Omega(t-t')]}{\pi(t-t')}y(t')\,dt' \quad \text{for} \quad t \in \R,$$ and write $$\int_{-T/2}^{T/2}\left|y(t)\right|^2\,dt = \inner{y,\calTc_{T}y}_{L_2(\R)} = \inner{\calBc_{\Omega}y,\calTc_{T}\calBc_{\Omega}y}_{L_2(\R)} = \inner{y,\calBc_{\Omega}\calTc_T\calBc_{\Omega}y}_{L_2(\R)}$$ for any bandlimited signal $y \in L_2(\R)$. 

The result is that the solution to this concentration problem is top eigenfunction of the operator $\calBc_{\Omega}\calTc_T\calBc_{\Omega}$. The orthonormal eigenfunctions $\psi_0, \psi_1, \ldots \in L_2(\R)$ of the self-adjoint operator $\calBc_{\Omega}\calTc_T\calBc_{\Omega}$ are known as the prolate spheroidal wave functions (PSWFs), and the corresponding eigenvalues $1 > \widetilde{\lambda}_0 > \widetilde{\lambda}_1 >  \cdots > 0$ are known as the PSWF eigenvalues, and are sorted in decreasing order.

It is easy to check that the timelimited PSWFs $\calTc_T\psi_k$ for $k \in \N$ are the eigenfunctions of the operator $\calTc_T\calBc_{\Omega}\calTc_T$ with the same corresponding eigenvalues $\widetilde{\lambda}_k$. The action of the operator $\calTc_T\calBc_{\Omega}\calTc_T$ on a signal $y \in L_2(\R)$ is given by $$(\calTc_T\calBc_{\Omega}\calTc_Ty)(t) = \int_{-T/2}^{T/2}\dfrac{\sin[\Omega(t-t')]}{\pi(t-t')}y(t')\,dt' \quad \text{for} \quad t \in [-\tfrac{T}{2},\tfrac{T}{2}].$$ With a simple change of variable, it can be shown that the eigenvalues of the above kernel integral operator only depend on the product $\Omega T$. When it is necessary to denote this dependence, we use the notation $\widetilde{\lambda}_k(c)$ to denote $\widetilde{\lambda}_k$ for values of $\Omega, T > 0$ which satisfy $\Omega T/2 = c$. 

As in the discrete case, $\psi_0$ is the signal bandlimited to $|\omega| \le \Omega$ whose energy is maximally concentrated in the time interval $t \in [-\tfrac{T}{2},\tfrac{T}{2}]$, and for each integer $k \ge 1$, $\psi_k$ is the signal bandlimited to $|\omega| \le \Omega$ whose energy is maximally concentrated in the time interval $t \in [-\tfrac{T}{2},\tfrac{T}{2}]$ subject to the additional constraint of being orthogonal to $\psi_0,\ldots,\psi_{k-1}$. Furthermore, the eigenvalue $\widetilde{\lambda}_k$ is the equal to the energy of $\psi_k$ in the time interval $t \in [-\tfrac{T}{2},\tfrac{T}{2}]$. Also, $\calTc_T\psi_0$ is the signal timelimited to $t \in [-\tfrac{T}{2},\tfrac{T}{2}]$ whose Fourier transform is maximally concentrated in the frequency band $|\omega| \le \Omega$, and for each integer $k \ge 1$, $\calTc_T\psi_k$ is the signal timelimited to $t \in [-\tfrac{T}{2},\tfrac{T}{2}]$ whose Fourier transform is maximally concentrated in the frequency band $|\omega| \le \Omega$ subject to the additional constraint of being orthogonal to $\calTc_T\psi_0,\ldots,\calTc_T\psi_{k-1}$. Furthermore, the eigenvalue $\widetilde{\lambda}_k$ is the equal to the fraction of energy that $\calTc_T\psi_k$ has in the frequency band $|\omega| \le \Omega$.

\section{Eigenvalue Behavior}
\label{sec:Results}

\subsection{Prior results on DPSS eigenvalues}
\label{sec:PrevDPSSBounds}
Showing that the DPSS eigenvalues are strictly between $0$ and $1$ is a trivial consequence of the facts that $\lambda_k = \left(\int_{-W}^{W}|\widehat{s}_k(f)|^2\,df\right)/\left(\int_{-1/2}^{1/2}|\widehat{s}_k(f)|^2\,df\right)$ and that $\widehat{\vs}_k(f)$ is a non-zero analytic function. It is also easy to check that the sum of all the DPSS eigenvalues is $\sum_{k = 0}^{N-1}\lambda_k = \trace(\mB) = 2NW$. What is perhaps more interesting is that the DPSS eigenvalues obey a particular clustering behavior. For any $\eps \in (0,\tfrac{1}{2})$, slightly fewer than $2NW$ eigenvalues lie in $[1-\eps,1)$, slightly fewer than $N-2NW$ eigenvalues lie in $(0,\eps]$, and very few eigenvalues lie in the transition region $(\eps,1-\eps)$. In Figure 1, we demonstrate this phenomenon by plotting the DPSS eigenvalues for $N = 1000$ and $W = \tfrac{1}{8}$ (so $2NW = 250$). The first $244$ eigenvalues lie in $[0.999,1)$ and the last $744$ eigenvalues lie in $(0,0.001]$. Only $12$ eigenvalues lie between $0.001$ and $0.999$. 

\begin{figure}
\label{fig:EigenvaluePlot}
\centering
\includegraphics[scale=0.35]{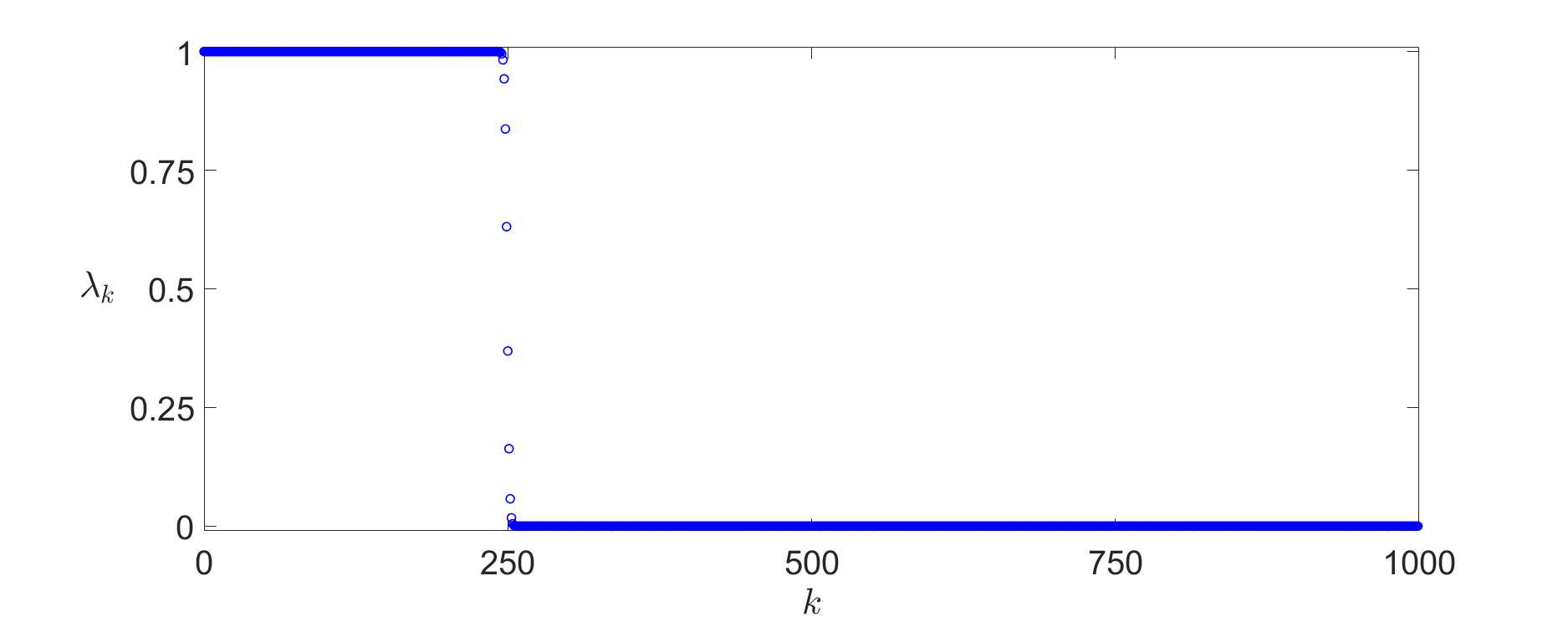}
\caption{A plot of the DPSS Eigenvalues $\{\lambda_k\}_{k = 0}^{N-1}$ for $N = 1000$ and $W = \tfrac{1}{8}$. These eigenvalues satisfy $\lambda_{243} \approx 0.9997$ and $\lambda_{256} \approx 0.0003$. Only $12$ of the $1000$ DPSS eigenvalues lie in $(0.001,0.999)$.}
\end{figure}

Experimentally, we can see that the width of this transition region behaves like $\#\{k : \eps < \lambda_k < 1-\eps\} = O(\log(NW) \log(\tfrac{1}{\eps}))$. This can be seen in Figures~\ref{fig:GapsizeVsN} and \ref{fig:GapsizeVsW} in Section~\ref{sec:NumericalResults}. Our main contribution will be to demonstrate this analytically, but before we do, we will briefly review some of the prior results along these lines.

We begin with the original results from Slepian \cite{SlepianV}. For any fixed $W \in (0,\tfrac{1}{2})$ and $b \in \R$, $$\lambda_{\floor{2NW+(b/\pi)\log N}} \sim \dfrac{1}{1+e^{b\pi}} \quad \text{as} \quad N \to \infty.$$ From this result, it is easy to show that for any fixed $W \in (0,\tfrac{1}{2})$ and $\eps \in (0,\tfrac{1}{2})$, $$\#\{k : \eps < \lambda_k < 1-\eps\} \sim \dfrac{2}{\pi^2}\log N \log\left(\dfrac{1}{\eps}-1\right) \quad \text{as} \quad N \to \infty.$$ This asymptotic bound on the width of the transition region correctly captures the logarithmic dependence on both $N$ and $\eps$, but not the dependence on $W$. Slepian also stated that if $0.2 < \lambda_k < 0.8$, then $$\lambda_k \approx \left[1+\exp\left(-\dfrac{\pi^2(2NW-k-\tfrac{1}{2})}{\log[8N\sin(2\pi W)]+\gamma}\right)\right]^{-1}$$ is a good approximation to $\lambda_k$ where $\gamma \approx 0.5772$ is the Euler-Mascheroni constant. This would suggest that $$\#\{k : \eps < \lambda_k < 1-\eps\} \approx \dfrac{2}{\pi^2}\log\left[8e^{\gamma}N\sin(2\pi W)\right]\log\left(\dfrac{1}{\eps}-1\right)$$ for $\eps \in (0.2,0.5)$. This correctly captures the logarithmic dependence on $N$, $W$, and $\eps$, but only holds for large values of $\eps$.

Very few papers provide non-asymptotic bounds regarding the width of the transition region $\#\{k : \eps < \lambda_k < 1-\eps\}$. Zhu and Wakin \cite{ZhuWakin15} showed that 
\begin{equation}
\#\{k : \eps \le \lambda_k \le 1-\eps\} \le \dfrac{\tfrac{2}{\pi^2}\log(N-1) + \tfrac{2}{\pi^2}\tfrac{2N-1}{N-1}}{\eps(1-\eps)}
\label{eq:ZhuWakin}
\end{equation}
for all integers $N \ge 2$, $W \in (0,\tfrac{1}{2})$, and $\eps \in (0,\tfrac{1}{2})$. This non-asymptotic bound correctly highlights the logarithmic dependence on $N$, but fails to capture the dependence on $W$. Also, the dependence on $\eps$ is $O(\tfrac{1}{\eps})$ as opposed to $O(\log \tfrac{1}{\eps})$. When $\eps$ is small, this bound is considerably worse than a $O(\log \tfrac{1}{\eps})$ bound. For example, when $N = 1000$, $W = \tfrac{1}{8}$, and $\eps = 10^{-3}$, this result becomes $\#\{k : \eps < \lambda_k < 1-\eps\} \le 1806$. More generally, when $\eps < \tfrac{2\log(N-1)}{\pi^2N}$, this bound is worse than the trivial bound of $\#\{k : \eps < \lambda_k < 1-\eps\} \le N$. 

Recently, Boulsane, Bourguiba, and Karoui \cite{Boulsane20} improved this bound to 
\begin{equation}
\#\{k : \eps \le \lambda_k \le 1-\eps\} \le \dfrac{\tfrac{1}{\pi^2}\log(2NW) + 0.45 - \tfrac{2}{3}W^2 + \tfrac{\sin^2(2\pi NW)}{6\pi^2N^2}}{\eps(1-\eps)}
\label{eq:Boulsane}
\end{equation}
for all integers $N \ge 1$, $W \in (0,\tfrac{1}{2})$, and $\eps \in (0,\tfrac{1}{2})$. For a fixed $W \in (0,\tfrac{1}{2})$ and large $N$, this bound is roughly half of (\ref{eq:ZhuWakin}). Also, this bound correctly captures the logarithmic dependence on $2NW$ as opposed to just $N$. However, this bound still has a dependence on $\eps$ that is $O(\tfrac{1}{\eps})$. Again, when $N = 1000$, $W = \tfrac{1}{8}$, and $\eps = 10^{-3}$, this result becomes $\#\{k : \eps < \lambda_k < 1-\eps\} \le 1000$. More generally, when $\eps < \tfrac{\log(2NW)}{\pi^2N}$, this bound is worse than the trivial bound of $\#\{k : \eps < \lambda_k < 1-\eps\} \le N$. Boulsane et. al. also used to a result on PSWF eigenvalues by Bonami, Jamming, and Karoui \cite{Bonami18} to prove that the DPSS eigenvalues satisfy 
\begin{equation}
\lambda_k \le 2\exp\left[-\eta\dfrac{k-2NW}{\log(\pi NW)+5}\right] \quad \text{for} \quad 2NW+\log(\pi NW)+6 \le k \le \pi NW, 
\label{eq:Boulsane1}
\end{equation} 
where $\eta = 0.069$, and 
\begin{equation}
\lambda_k \le 2\exp\left[-(2k+1)\log\left(\dfrac{2k+2}{e\pi NW}\right)\right] \quad \text{for} \quad 2 \le \dfrac{e\pi}{2}NW \le k \le N-1.
\label{eq:Boulsane2}
\end{equation} 
However, with no similar lower bounds on the DPSS eigenvalues $\lambda_k$ for $k < 2NW$, they were unable to obtain a bound on $\#\{k : \eps < \lambda_k < 1-\eps\}$ which has a logarithmic dependence on $\eps$. 

In \cite{KarnikFST}, the authors of this paper along with Zhu and Wakin proved that 
\begin{equation}
\#\{k : \eps < \lambda_k < 1-\eps\} \le \left(\dfrac{8}{\pi^2}\log(8N)+12\right)\log\left(\dfrac{15}{\eps}\right)
\label{eq:FST}
\end{equation}
for all $N \in \N$, $W \in (0,\tfrac{1}{2})$, and $\eps \in (0,\tfrac{1}{2})$.  This bound correctly captures the logarithmic dependence on both $N$ and $\eps$, but not the dependence on $W$. Also, the leading constant $\tfrac{8}{\pi^2}$ is four times larger than that of the asymptotic results by Slepian. For comparison with the previous two non-asymptotic bounds, when $N = 1000$, $W = \tfrac{1}{8}$, and $\eps = 10^{-3}$, then this result becomes $\#\{k : \eps < \lambda_k < 1-\eps\} \le 185$, which is still much larger than the actual value of $\#\{k : \eps < \lambda_k < 1-\eps\} = 12$. 

\subsection{Prior results on PSWF eigenvalues}
\label{sec:PrevPSWFBounds}
The PSWF eigenvalues $\widetilde{\lambda}_k$ have a similar behavior as the DPSS eigenvalues. The PSWF eigenvalues are also all strictly between $0$ and $1$, and they have a sum of $\sum_{k = 0}^{\infty}\widetilde{\lambda}_k = \tfrac{2c}{\pi}$, which is the analogous time-bandwidth product in the continuous case. Furthermore, for any $\eps \in (0,\tfrac{1}{2})$, slightly fewer than $\tfrac{2c}{\pi}$ eigenvalues lie in $[1-\eps,1)$, very few eigenvalues lie in $(\eps,1-\eps)$, and the rest lie in $(0,\eps]$. 

Landau and Widom \cite{LandauWidom80} showed that for any fixed $\eps \in (0,\tfrac{1}{2})$, $$\#\left\{k : \eps < \widetilde{\lambda}_k < 1-\eps\right\} = \dfrac{2}{\pi^2}\log(c)\log\left(\dfrac{1}{\eps}-1\right)+o(\log(c)) \quad \text{as} \quad c \to \infty.$$ This asymptotic result is similar in form to Slepian's result from \cite{SlepianV}, except the first logarithm contains the time-bandwidth product instead of the time duration. Also, this result specifies that the error term scales like $o(\log(c))$. 

Osipov \cite{Osipov13} showed that for any numbers $c > 22$ and $3 < \delta < \tfrac{\pi c}{16}$, the PSWF eigenvalues satisfy $$\widetilde{\lambda}_k < \dfrac{7056^2c^3}{2\pi}\exp\left[-2\delta\left(1-\dfrac{\delta}{2\pi c}\right)\right] \quad \text{for all} \quad k \ge \dfrac{2c}{\pi}+\dfrac{2}{\pi^2}\delta\log\left(\dfrac{4\pi ec}{\delta}\right).$$ With an appropriate choice of $\delta$, this result implies that $$\#\left\{k : \widetilde{\lambda}_k > \eps\right\} \le \dfrac{2c}{\pi}+\dfrac{32}{31\pi^2}\log\left(\dfrac{7056^2c^3}{2\pi \eps}\right)\log\left(\dfrac{31\pi ec}{4\log\left(\tfrac{7056^2c^3}{2\pi \eps}\right)}\right) \quad \text{for} \quad \dfrac{7056^2c^3}{2\pi}\exp\left(-\dfrac{31\pi c}{256}\right) < \eps < \dfrac{1}{2}.$$ Since Landau \cite{Landau93} showed that $\widetilde{\lambda}_{\floor{2c/\pi}-1} \ge \tfrac{1}{2}$, the above result by Osipov shows that $\#\left\{k : \eps < \widetilde{\lambda}_k \le \tfrac{1}{2}\right\} \le O(\log^2(c)+\log(c)\log(\tfrac{1}{\eps}))$. This only bounds roughly half of the transition region. Also, the bound has a suboptimal dependence on $c$, and the constants are rather large.

Israel \cite{Israel15} showed that for any $\alpha \in (0,\tfrac{1}{2}]$, there exists a constant $C_{\alpha} \ge 1$ such that $$\#\left\{k : \eps < \widetilde{\lambda}_k < 1-\eps\right\} \le C_{\alpha}\log^{1+\alpha}\left(\dfrac{\log\left(\tfrac{2c}{\pi}\right)}{\eps}\right)\log\left(\dfrac{2c}{\pi\eps}\right)$$ for all $c \ge \pi$ and $\eps \in (0,\tfrac{1}{2})$. When compared to the bound by Osipov, this bound has an improved dependence on $c$, but a worse dependence on $\eps$. Also, the constant $C_{\alpha}$ is not explicitly given. 

Bonami, Jamming, and Karoui \cite{Bonami18} established the following bounds on PSWF eigenvalues 
\\
$$\widetilde{\lambda}_k \ge 1-\dfrac{7}{\sqrt{c}}\dfrac{(2c)^k}{k!}e^{-c} \quad \text{for} \quad c > 0 \ \text{and} \ 0 \le k < \dfrac{2c}{2.7}$$

$$\widetilde{\lambda}_k \le \dfrac{1}{2}\exp\left[-\eta\dfrac{k-\tfrac{2c}{\pi}}{\log(c)+5}\right] \quad \text{for} \quad c \ge 22 \ \text{and} \ \dfrac{2c}{\pi}+\log(c)+6 \le k \le c$$

$$\widetilde{\lambda}_k \le \exp\left[-(2k+1)\log\left(\dfrac{2k+2}{ec}\right)\right] \quad \text{for} \quad c > 0 \ \text{and} \ k \ge \max\left(\dfrac{ec}{2},2\right),$$ 
\\
where $\eta = 0.069$. The first bound shows that $\widetilde{\lambda}_k$ approaches $1$ as $k$ decreases at a faster than exponential rate.  The second bound shows that $\widetilde{\lambda}_k$ decreases exponentially over a bounded range of values of $k$ that are slightly greater than the time-bandwidth product $\tfrac{2c}{\pi}$. The third bound shows that $\widetilde{\lambda}_k$ approaches $0$ a faster than exponential rate once $k > \max(\tfrac{ec}{2},2)$. Unfortunately, one can check that for any $c > 0$ and any integer $0.43c \le k < \tfrac{2c}{2.7}$, the first bound is negative, and thus, uninformative. Thus, these bounds give no information about $\widetilde{\lambda}_k$ for $k \in [0.43c,\tfrac{2c}{\pi}+\log(c)+6) \cup (c,\tfrac{ec}{2})$, which is a total of $O(c)$ values of $k$.

\subsection{New results}
\label{sec:NewResults}
In this paper, we will establish the following two non-asymptotic bounds on the number of DPSS eigenvalues in the transition region $(\eps,1-\eps)$.

\begin{theorem}
\label{thm:BoundWithoutW}
For any $N \in \N$, $W \in (0,\tfrac{1}{2})$, and $\eps \in (0,\tfrac{1}{2})$,  $$\#\{k : \eps < \lambda_k < 1-\eps \} \le 2\ceil{\dfrac{1}{\pi^2}\log(4N)\log\left(\dfrac{4}{\eps(1-\eps)}\right)}.$$
\end{theorem}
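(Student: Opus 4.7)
The plan is to combine a sharp trace bound on $\mB(\mI-\mB)$ with a polynomial amplification step that converts a $\tfrac{1}{\eps(1-\eps)}$-type factor into a $\log\bigl(\tfrac{1}{\eps(1-\eps)}\bigr)$-type factor.

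First, I would establish a base trace inequality of the form
$$\trace[\mB(\mI-\mB)] = \sum_{k=0}^{N-1}\lambda_k(1-\lambda_k) \le \frac{c_1}{\pi^2}\log(4N)$$
for a small absolute constant $c_1$. The cleanest route uses the identity $\trace[\mB(\mI-\mB)] = \trace[\mB] - \trace[\mB^2] = 2NW - \|\mB\|_F^2$. Expanding $\|\mB\|_F^2$ as a double sum of $|\sin[2\pi W(m-n)]/[\pi(m-n)]|^2$ and writing it in terms of the shift $j = m-n$, Parseval's identity $\sum_{j\in\Z} K(j)^2 = 2W$ (for $K(j) = \sin(2\pi Wj)/(\pi j)$) reduces the trace to $N\sum_{|j|\ge N}K(j)^2 + \sum_{1\le |j|\le N-1}|j|K(j)^2$. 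Using $K(j)^2 \le 1/(\pi j)^2$ and comparing against $\sum 1/j^2 = \pi^2/6$ then yields the claimed logarithmic bound, independent of $W$.

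Second, since $\lambda_k(1-\lambda_k) \le 1/4$, this bootstraps to moment bounds $\sum_k [\lambda_k(1-\lambda_k)]^j \le \tfrac{c_1 \cdot 4^{-(j-1)}}{\pi^2}\log(4N)$ for every integer $j \ge 1$. A direct Markov application gives only the weaker $\#\{k:\eps<\lambda_k<1-\eps\} \le \tfrac{c_1\log(4N)}{\pi^2\eps(1-\eps)}$, so the $\tfrac{1}{\eps(1-\eps)}$-dependence must be amplified to $\log\tfrac{1}{\eps(1-\eps)}$. For this I would construct a polynomial $P_M$ of degree $O(M)$ satisfying $P_M \ge 1$ on $(\eps,1-\eps)$, $P_M \ge 0$ on $[0,1]$, and $\trace[P_M(\mB)] \le \tfrac{c_2 M}{\pi^2}\log(4N)$ (linear in $M$), by combining the moment bounds across many values of $j$ simultaneously. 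Choosing $M = \ceil{\tfrac{1}{\pi^2}\log(4N)\log(\tfrac{4}{\eps(1-\eps)})}$ and separately treating the two halves $(\eps,1/2]$ and $(1/2,1-\eps)$---natural in view of the $W \leftrightarrow \tfrac{1}{2}-W$ symmetry that maps $\lambda_k$ to $1-\lambda_k$---then yields the stated bound, with the factor of $2$ arising from the two halves.

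The principal obstacle is the construction of the amplifying polynomial $P_M$. A naive monomial choice $P_M(\lambda) = c_M[4\lambda(1-\lambda)]^M$ fails: its lower bound on the transition region is $[4\eps(1-\eps)]^M$, which decays at the same rate as the $M$-th moment bound, so the resulting count depends polynomially rather than logarithmically in $1/\eps$. A successful $P_M$ must exploit the entire family of moment bounds simultaneously---for instance via a telescoping sum across dyadic scales of $\lambda(1-\lambda)\in[0,1/4]$, or a Chebyshev-type polynomial on this image interval---so that the factor $M$ rather than a power $[4\eps(1-\eps)]^{-M}$ appears.
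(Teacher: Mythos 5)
Your first step---the trace identity $\sum_k\lambda_k(1-\lambda_k) = \trace(\mB-\mB^2) = 2NW - \|\mB\|_F^2$ and the $O(\log N)$ bound that follows by Parseval---is sound, and essentially reproduces the Zhu--Wakin estimate that the paper quotes as equation~(\ref{eq:ZhuWakin}). The trouble is that the amplification step is not merely difficult but impossible with the data you have. Set $\mu_k := \lambda_k(1-\lambda_k)$ and $T_1 := \tfrac{c_1}{\pi^2}\log(4N)$. The ``bootstrapped'' moment bounds $\sum_k\mu_k^j \le (1/4)^{j-1}T_1$ follow formally from $\mu_k \in [0,\tfrac14]$ and $\sum_k\mu_k \le T_1$, so they carry no information beyond the $j=1$ case. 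In particular, a hypothetical eigenvalue profile with $\lfloor 4T_1\rfloor$ eigenvalues exactly at $\lambda=\tfrac12$ (so $\mu_k=\tfrac14$) and the remainder at $\{0,1\}$ saturates every one of your moment inequalities and has $\#\{k:\eps<\lambda_k<1-\eps\}\approx 4T_1$ for \emph{every} $\eps\in(0,\tfrac12)$. Since that profile is feasible under your moment constraints, no inference from them---Markov, amplifying polynomial, telescoping, Chebyshev---can yield a bound below $4T_1\approx\tfrac{8}{\pi^2}\log(4N)$ (the sharp constant from step 1 is $c_1\approx 2$). But the theorem asserts, for $\eps$ near $\tfrac12$, a bound $\approx\tfrac{8\log 2}{\pi^2}\log(4N) < \tfrac{8}{\pi^2}\log(4N)$. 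Your route therefore cannot close.

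The paper's proof is structurally different: it establishes exponential decay of the \emph{individual} eigenvalues of $\mB-\mB^2$, which is strictly more information than any collection of trace moments derived from $T_1$. It writes $\mB-\mB^2 = \lim_{L\to\infty}\mX_L^*\mX_L$ where $\mX_L\in\R^{2L\times N}$ is the sinc matrix with rows indexed by $\{-L,\dots,-1\}\cup\{N,\dots,N+L-1\}$; it observes via the sine addition formula that $\mX_L$ satisfies a Sylvester displacement equation $\mC_L\mX_L-\mX_L\mD=\mU_L\mV^*$ with $\mC_L,\mD$ the diagonal row- and column-index matrices and rank-$2$ right-hand side; and it invokes the Beckermann--Townsend / Zolotarev-number bound (Theorem~\ref{thm:UnboundedSpec}): because $\Spec(\mC_L)\subset(-\infty,-1]\cup[N,\infty)$ is well separated from $\Spec(\mD)\subset[0,N-1]$, every singular value satisfies $\sigma_{2k+1}(\mX_L)\le 2\exp(-\pi^2k/\log(16N^2))$, and hence $\mu_{2k+1}(\mB-\mB^2)$ decays exponentially in $k$. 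The rational-approximation content of the Zolotarev bound (degree-$k$ rational functions separate the two spectra far more effectively than degree-$k$ polynomials) is precisely what your purely polynomial amplification scheme lacks and what the moment data you constructed cannot recover.
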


\begin{theorem}
\label{thm:BoundWithW}
For any $N \in \N$, $W \in (0,\tfrac{1}{2})$, and $\eps \in (0,\tfrac{1}{2})$,  $$\#\{k : \eps < \lambda_k < 1-\eps\} \le \dfrac{2}{\pi^2}\log(100NW+25)\log\left(\dfrac{5}{\eps(1-\eps)}\right) + 7.$$
\end{theorem}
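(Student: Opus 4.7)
The plan is to combine a sharp non-asymptotic bound on $\trace(\mB(\mI - \mB))$ exhibiting the desired $\log(NW)$ scaling with a polynomial-amplification argument that converts a Markov-type $1/\eps$ dependence into a $\log(1/(\eps(1-\eps)))$ dependence.

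First, I would directly compute $\trace(\mB(\mI - \mB)) = 2NW - \trace(\mB^2)$. Expanding $\trace(\mB^2) = \sum_{|j|\le N-1}(N-|j|)\bigl(\tfrac{\sin(2\pi Wj)}{\pi j}\bigr)^2$ and invoking Parseval's identity $\sum_{j\in\Z}\bigl(\tfrac{\sin(2\pi Wj)}{\pi j}\bigr)^2 = 2W$ to cancel the leading $2NW$ contribution reduces the problem to evaluating $\tfrac{2}{\pi^2}\sum_{j=1}^{N-1}\tfrac{\sin^2(2\pi Wj)}{j}$ plus a small boundary correction. Using the Fourier-series identity $\sum_{j\ge 1}\tfrac{\cos(xj)}{j} = -\log|2\sin(x/2)|$ valid on $(0, 2\pi)$, the residual sum evaluates to $\tfrac{1}{\pi^2}\log(NW) + O(1)$, giving an explicit non-asymptotic bound $\trace(\mB(\mI - \mB)) \le \tfrac{1}{\pi^2}\log(cNW) + c'$. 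This immediately yields, via Markov, the Boulsane-type estimate $N_\eps \le (\tfrac{1}{\pi^2}\log(cNW) + c')/(\eps(1-\eps))$.

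Second, to sharpen the $1/\eps$ dependence into $\log(1/(\eps(1-\eps)))$ while preserving the $\log(NW)$ scaling, I would apply a polynomial-amplification step. Choose a polynomial $p$ of degree $d = O(\log(1/\eps))$ satisfying $p(0) = 0$ and $p(1) = 1$, designed so that $p((\eps, 1-\eps)) \subseteq [\alpha, 1-\alpha]$ for a universal constant $\alpha \in (0, 1/2)$; then $p(\lambda_k)(1 - p(\lambda_k)) \ge \alpha(1-\alpha)$ for every transition eigenvalue. Since $p(0) = 0$ and $p(1) = 1$, the product $p(x)(1 - p(x))$ factors as $x(1-x)\,r(x)$ for a polynomial $r$ of degree $2d - 2$, and
$$
N_\eps \cdot \alpha(1-\alpha) \le \trace\bigl(p(\mB)(\mI - p(\mB))\bigr) = \trace\bigl(\mB(\mI - \mB)\,r(\mB)\bigr) \le \|r\|_{\infty,[0,1]} \cdot \trace(\mB(\mI - \mB)).
$$
Provided $\|r\|_{\infty,[0,1]} = O(\log(1/\eps))$, combining with Step~1 gives $N_\eps = O(\log(NW)\log(1/\eps))$, and careful constant-tracking throughout both steps should match the advertised factor $\tfrac{2}{\pi^2}\log(100NW+25)\log(5/(\eps(1-\eps))) + 7$.

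The main obstacle is constructing $p$ satisfying both the mapping property and the $\|r\|_\infty$ bound. The basic tension is that a monotone polynomial $p$ with $p(0) = 0$ and $p(\eps) \ge \alpha$ must have slope $\gtrsim \alpha/\eps$ near the origin, which by Markov's inequality forces either $\deg p \gtrsim 1/\sqrt{\eps}$ or $\|r\|_\infty \gtrsim 1/\eps$---yielding no improvement over Boulsane. Overcoming this would require either a non-monotone $p$ built from iterated compositions (for example, alternating $x \mapsto x(2-x)$ and its ``dual'' $x \mapsto x^2$, with $\|r\|_\infty$ analyzed recursively through the factorization), or handling the intervals $(\eps, 1/2]$ and $[1/2, 1-\eps)$ separately via the prolate symmetry $\mI - \mB_W \sim \mB_{1/2 - W}$ and a dyadic decomposition of each. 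I expect the dyadic/symmetry strategy to be the more tractable route, and it also naturally explains the factor of $2$ in the theorem (one logarithmic factor per half).
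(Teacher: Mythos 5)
Your Step~1 (showing $\trace(\mB(\mI-\mB)) = O(\log(NW))$) is sound and does capture the correct scaling in $NW$; in fact this trace calculation is essentially the Boulsane--Bourguiba--Karoui bound in disguise, and similar sinc-sum identities appear in the construction of $\mX_L$ in the paper. However, Step~2 (polynomial amplification) has a gap that cannot be closed, and you have partially identified it yourself but underestimated its severity. The obstacle is not a trade-off between $\deg p$ and $\|r\|_\infty$; the quantity $\|r\|_{\infty,[0,1]}$ is \emph{unconditionally} of size $\gtrsim 1/\eps$. Indeed, if $p$ is continuous with $p((\eps,1-\eps))\subseteq[\alpha,1-\alpha]$, then $p(\eps)\in[\alpha,1-\alpha]$ by continuity, so
$$
r(\eps) \;=\; \frac{p(\eps)\bigl(1-p(\eps)\bigr)}{\eps(1-\eps)} \;\ge\; \frac{\alpha(1-\alpha)}{\eps(1-\eps)} \;\gtrsim\; \frac{1}{\eps},
$$
regardless of $\deg p$, regardless of monotonicity, and regardless of whether $p$ is built from iterated compositions. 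Consequently the chain $N_\eps\,\alpha(1-\alpha) \le \|r\|_\infty\cdot\trace(\mB(\mI-\mB))$ can never beat the Markov-type $1/\eps$ dependence.

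The deeper reason a trace bound alone is insufficient: $\trace(\mB(\mI-\mB))=O(\log(NW))$ is consistent both with geometric decay of the eigenvalues $\mu_k(\mB-\mB^2)$, which would give the desired $O(\log(NW)\log(1/\eps))$ count above level $\eps(1-\eps)$, \emph{and} with much slower (e.g.\ harmonic) decay, which would give $\Theta(1/\eps)$. No functional of the form $\trace(h(\mB))$ with $\|h\|_\infty$-type control can separate these cases, because it only sees the sum and not the distribution. The dyadic/symmetry variant you sketch runs into the same wall: the slices near level $\eps$ contribute the least to the trace, so the trace gives the weakest control precisely where you need the strongest. The paper resolves this by a fundamentally different mechanism: it exhibits $\mB-\mB^2$ as a limit of Gram matrices $\mX_L^*\mX_L$ where $\mX_L$ has rank-$2$ \emph{displacement} structure, and then bounds the individual singular values $\sigma_{k}(\mX_L)$ directly --- geometrically in $k$ --- by combining a Zolotarev-number estimate (Theorem~\ref{thm:UnboundedSpec}) for the displacement part with a Chebyshev-interpolation bound (Lemma~\ref{lem:SincDerivatives}, Theorem~\ref{thm:ChebyshevInterp}) for the near-boundary blocks $\mX^{(1)}_L,\mX^{(2)}_L$. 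This controls the entire spectrum of $\mB-\mB^2$, not merely its trace, and that is what delivers the $\log(1/\eps)$ factor. Without some comparably fine spectral information, your approach cannot reach the stated bound.
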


Both Theorem~\ref{thm:BoundWithoutW} and Theorem~\ref{thm:BoundWithW} capture the logarithmic dependence of the width of the transition region on $N$ and $\eps$. Also, both bounds have a leading constant of $\tfrac{2}{\pi^2}$, which is consistent with the asymptotic result by Slepian. Furthermore, Theorem~\ref{thm:BoundWithW} also captures the logarithmic dependence on $W$. We choose to include Theorem~\ref{thm:BoundWithoutW} since the proof is much simpler and since the bound in Theorem~\ref{thm:BoundWithoutW} is better than the bound in Theorem~\ref{thm:BoundWithW} when $W \ge \tfrac{1}{25}$. Again for comparison with the existing non-asymptotic bounds, when $N = 1000$, $W = \tfrac{1}{8}$, and $\eps = 10^{-3}$, the bound in Theorem~\ref{thm:BoundWithoutW} becomes $\#\{k : \eps < \lambda_k < 1-\eps\} \le 14$ and the bound in Theorem~\ref{thm:BoundWithW} becomes $\#\{k : \eps < \lambda_k < 1-\eps\} \le 23$. Both of these bounds are much closer to the actual value $\#\{k : \eps < \lambda_k < 1-\eps\} = 12$ than any of the existing non-asymptotic bounds.

With the non-asymptotic bounds in Theorem~\ref{thm:BoundWithoutW} and Theorem~\ref{thm:BoundWithW}, the following bounds on the eigenvalues themselves are almost immediate. 
\begin{cor}
\label{cor:DPSSEigBounds}
For any $N \in \N$, $W \in (0,\tfrac{1}{2})$, we have 
$$\lambda_k \ge 1-\min\left\{8\exp\left[-\dfrac{\floor{2NW}-k-2}{\tfrac{2}{\pi^2}\log(4N)}\right],10\exp\left[-\dfrac{\floor{2NW}-k-7}{\tfrac{2}{\pi^2}\log(100NW+25)}\right]\right\} \quad \text{for} \quad 0 \le k \le \floor{2NW}-1$$
and
$$\lambda_k \le \min\left\{8\exp\left[-\dfrac{k-\ceil{2NW}-1}{\tfrac{2}{\pi^2}\log(4N)}\right],10\exp\left[-\dfrac{k-\ceil{2NW}-6}{\tfrac{2}{\pi^2}\log(100NW+25)}\right]\right\} \quad \text{for} \quad \ceil{2NW} \le k \le N-1.$$
\end{cor}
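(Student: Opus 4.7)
The plan is to derive the upper bound on $\lambda_k$ from Theorems~\ref{thm:BoundWithoutW} and~\ref{thm:BoundWithW} together with the trace identity $\sum_j\lambda_j = \trace(\mB) = 2NW$, and then to deduce the lower bound from the upper bound via a duality between $\mB(N,W)$ and $\mB(N,\tfrac{1}{2}-W)$.

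For the upper bound, I would fix $k$ with $\lceil 2NW\rceil \le k \le N-1$, fix $\epsilon \in (0,\tfrac{1}{2})$, and suppose $\lambda_k > \epsilon$. Monotonicity of the $\lambda_j$ then gives
$$k+1 \;\le\; \#\{j : \lambda_j > \epsilon\} \;=\; \#\{j : \lambda_j \ge 1-\epsilon\} \;+\; \#\{j : \epsilon < \lambda_j < 1-\epsilon\}.$$
The second summand is controlled directly by Theorem~\ref{thm:BoundWithoutW} (respectively, Theorem~\ref{thm:BoundWithW}). For the first, Markov's inequality applied to the trace identity gives $\#\{j:\lambda_j \ge 1-\epsilon\} \le 2NW/(1-\epsilon)$, and since this count is an integer it is at most $\lceil 2NW\rceil$ once $\epsilon$ is sufficiently small. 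Plugging in the Theorem~\ref{thm:BoundWithoutW} bound and using $\log(4/(\epsilon(1-\epsilon))) \le \log(8/\epsilon)$ for $\epsilon\le\tfrac{1}{2}$ yields $k+1 \le \lceil 2NW\rceil + 2 + \tfrac{2}{\pi^2}\log(4N)\log(8/\epsilon)$, which rearranges, upon letting $\epsilon\downarrow\lambda_k$, into the first entry of the minimum. The analogous computation with Theorem~\ref{thm:BoundWithW} (whose additive constant is $7$ rather than $2$ and whose logarithmic argument is $5/(\epsilon(1-\epsilon))$) produces the second entry with the shifted constants $10$ and $-6$.

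For the lower bound, I would invoke the identity $\mB(N,\tfrac{1}{2}-W) = \mI - \mD\,\mB(N,W)\,\mD$ with $\mD := \diag((-1)^m)_{m=0}^{N-1}$, obtained from the trigonometric identity $\sin[2\pi(\tfrac{1}{2}-W)(m-n)] = -(-1)^{m-n}\sin[2\pi W(m-n)]$ for integers $m,n$. Since $\mD$ is orthogonal, the spectrum of $\mB(N,\tfrac{1}{2}-W)$ is $\{1-\lambda_j(N,W)\}_j$, and matching the descending orderings on both sides gives $\lambda_k(N,W) = 1 - \lambda_{N-1-k}(N,\tfrac{1}{2}-W)$. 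Applying the just-proved upper bound to $\lambda_{N-1-k}(N,\tfrac{1}{2}-W)$, with $\lceil 2N(\tfrac{1}{2}-W)\rceil = N - \lfloor 2NW\rfloor$, then translates directly into the claimed lower bound; for the $\log(100NW+25)$ variant one additionally uses that the transition count in Theorem~\ref{thm:BoundWithW} is invariant under $W\leftrightarrow \tfrac{1}{2}-W$ (by the same duality), so the $W$-dependent constant may be written with the original $W$.

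The main obstacle is bookkeeping the regime where $\epsilon$ is not small enough for the integer refinement $\#\{j:\lambda_j \ge 1-\epsilon\} \le \lceil 2NW\rceil$ to hold directly. In that regime, however, the claimed right-hand side is itself bounded below by $1/(\lceil 2NW\rceil+1)$, so the inequality can be verified either trivially (when the right-hand side exceeds $1$) or from the coarser estimate $\#\{j:\lambda_j\ge 1-\epsilon\}\le 4NW$ valid for $\epsilon\le\tfrac{1}{2}$; the leading constants $8$ and $10$ are chosen precisely to absorb these error terms.
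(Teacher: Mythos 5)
Your counting argument is structurally the same as the paper's: for the upper bound on $\lambda_k$ with $k \ge \ceil{2NW}$, assume $\lambda_k > \eps$, note that $\{0,\ldots,k\}$ splits into indices with $\lambda_j \ge 1-\eps$ and indices in the transition region, apply Theorem~\ref{thm:BoundWithoutW}/\ref{thm:BoundWithW} to the second group, and rearrange. The duality $\lambda_k(N,W) = 1 - \lambda_{N-1-k}(N,\tfrac12-W)$ (via $\mB(N,\tfrac12-W) = \mD(\mI-\mB(N,W))\mD^*$) is also valid and is indeed recorded in the paper; using it to deduce the lower bound from the upper bound is a reasonable variant of what the paper does (the paper instead runs a symmetric contradiction argument directly, using $\lambda_{\floor{2NW}-1}\ge\tfrac12$).

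The genuine gap is in how you control $\#\{j : \lambda_j \ge 1-\eps\}$. The paper invokes Lemma~\ref{lem:MiddleEigs} (from \cite{ZhuWakin17}): $\lambda_{\ceil{2NW}} \le \tfrac12$, which gives $\#\{j : \lambda_j \ge 1-\eps\} \le \ceil{2NW}$ for \emph{every} $\eps < \tfrac12$, and also guarantees $\eps < \tfrac12$ under the contradiction hypothesis so that the transition-region theorem applies. You try to replace this with Markov's inequality on the trace, $\#\{j : \lambda_j \ge 1-\eps\} \le 2NW/(1-\eps)$. That bound is only close to $\ceil{2NW}$ when $\eps \lesssim 1/(2NW)$; for $\eps$ near $\tfrac12$ it is $\approx 4NW$, an excess of order $NW$ over $\ceil{2NW}$. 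Your proposed fallback, bounding the count by $4NW$, therefore replaces $\ceil{2NW}$ by $4NW$ in the rearranged inequality, turning the exponent $k - \ceil{2NW} - 1$ into $k - 4NW - 1$; this is a loss of roughly $2NW$ in the exponent, not an additive constant, and cannot be absorbed by enlarging $8$ or $10$. There is also a second, more basic issue: to pass from ``for every $\eps \in (0,\tfrac12)$ with $\eps < \lambda_k$ we have $\eps \le B$'' to ``$\lambda_k \le B$'' requires knowing $\lambda_k \le \tfrac12$; otherwise the supremum over admissible $\eps$ is capped at $\tfrac12$ and you only learn $B \ge \tfrac12$. Markov alone gives $\lambda_k \le \tfrac12$ only for $k \ge 4NW$, not for all $k \ge \ceil{2NW}$. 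Both problems are resolved exactly by Lemma~\ref{lem:MiddleEigs}, which is not derivable from the trace identity; you would need to import it (or an equivalent monotonicity fact about $\lambda_{\ceil{2NW}}$) for the proof to close. The same dependence propagates into your duality-based lower bound, since it reduces to the upper bound for bandwidth $\tfrac12-W$.
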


To the best of our knowledge, the first bound in Corollary~\ref{cor:DPSSEigBounds} represents the first non-asymptotic lower bound on the DPSS eigenvalues $\lambda_k$ for $k \le \floor{2NW}-1$. 

The second bound in Corollary~\ref{cor:DPSSEigBounds} is similar in form to (\ref{eq:Boulsane1}), except this result has an exponential decay rate of $\tfrac{\pi^2}{2\log(100 NW+25)}$ instead of $\tfrac{0.069}{\log(\pi NW)+5}$. It is not hard to check that when $NW \ge 0.07$, the exponential decay rate from the second bound is at least $71$ times larger than the exponential decay rate in (\ref{eq:Boulsane1}). 

The second bound in Corollary~\ref{cor:DPSSEigBounds} does not capture the faster than exponential decay rate of $\lambda_k$ for $k \ge \tfrac{e\pi}{2}NW$ that is demonstrated by (\ref{eq:Boulsane2}). However, we note that for $NW \ge 15$, the second bound in Corollary~\ref{cor:DPSSEigBounds} will yield an upper bound for $\lambda_{\ceil{e\pi NW/2}}$ that is less than the single precision machine epsilon of $2^{-23} \approx 1.2\times 10^{-7}$. Also, for $NW \ge 31$, the second bound in Corollary~\ref{cor:DPSSEigBounds} will yield an upper bound for $\lambda_{\ceil{e\pi NW/2}}$ that is below the double precision machine epsilon of $2^{-52} \approx 2.2\times 10^{-16}$. So these bounds are still useful. 

From the eigenvalue bounds in Corollary~\ref{cor:DPSSEigBounds}, we can obtain the following bounds on the sums of the leading and trailing DPSS eigenvalues.
\begin{cor}
\label{cor:DPSSEigSumBounds}
For any $N \in \N$, $W \in (0,\tfrac{1}{2})$, we have 
$$\sum_{k = 0}^{K-1}(1-\lambda_k) \le \min\left\{\dfrac{16}{\pi^2}\log(4N)\exp\left[-\dfrac{\floor{2NW}-K-2}{\tfrac{2}{\pi^2}\log(4N)}\right], \dfrac{20}{\pi^2}\log(100NW+25)\exp\left[-\dfrac{\floor{2NW}-K-7}{\tfrac{2}{\pi^2}\log(100NW+25)}\right]\right\}$$ for $1 \le K \le \floor{2NW}$, and
$$\sum_{k = K}^{N-1}\lambda_k \le \min\left\{\dfrac{16}{\pi^2}\log(4N)\exp\left[-\dfrac{K-\ceil{2NW}-2}{\tfrac{2}{\pi^2}\log(4N)}\right], \dfrac{20}{\pi^2}\log(100NW+25)\exp\left[-\dfrac{K-\ceil{2NW}-7}{\tfrac{2}{\pi^2}\log(100NW+25)}\right]\right\}$$ for $\ceil{2NW} \le K \le N-1$.
\end{cor}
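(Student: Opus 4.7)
The plan is to derive both bounds by a direct termwise application of the pointwise eigenvalue estimates from Corollary~\ref{cor:DPSSEigBounds}, followed by a geometric series sum. Each of the two min-expressions in the target gives two separate pointwise bounds, and summing each one yields the corresponding half of the min in Corollary~\ref{cor:DPSSEigSumBounds}; so it suffices to prove four inequalities of essentially the same form. Throughout, let $\alpha = \tfrac{2}{\pi^2}\log(4N)$ for the ``$\log(4N)$'' variant and $\beta = \tfrac{2}{\pi^2}\log(100NW+25)$ for the ``$\log(100NW+25)$'' variant.

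The key analytic ingredient is the elementary inequality $e^x \ge 1+x$, which implies $e^{1/\alpha}-1 \ge 1/\alpha$, hence
\[
\sum_{j=0}^{\infty} e^{-j/\alpha} \;=\; \frac{1}{1-e^{-1/\alpha}} \;=\; \frac{e^{1/\alpha}}{e^{1/\alpha}-1} \;\le\; \alpha\, e^{1/\alpha},
\]
and similarly with $\alpha$ replaced by $\beta$. This is what absorbs a single-unit shift in the exponent and produces the leading factor of $\alpha$ (resp.~$\beta$) in the final estimate.

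For the leading sum, apply the first bound of Corollary~\ref{cor:DPSSEigBounds} termwise for $k=0,\ldots,K-1$ (all valid since $K-1\le \floor{2NW}-1$), factor out the $k=K-1$ term, and reindex by $j=K-1-k$:
\[
\sum_{k=0}^{K-1}(1-\lambda_k) \;\le\; 8\exp\!\left[-\tfrac{\floor{2NW}-K-1}{\alpha}\right]\sum_{j=0}^{K-1} e^{-j/\alpha} \;\le\; 8\alpha\, e^{1/\alpha}\exp\!\left[-\tfrac{\floor{2NW}-K-1}{\alpha}\right] \;=\; \tfrac{16}{\pi^2}\log(4N)\exp\!\left[-\tfrac{\floor{2NW}-K-2}{\alpha}\right],
\]
where the $e^{1/\alpha}$ factor exactly accounts for the ``$-2$'' vs.\ ``$-1$'' in the exponent. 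Replacing $(8,\alpha)$ with $(10,\beta)$ and the shift $-1$ with $-6$ gives the second variant and yields the $\tfrac{20}{\pi^2}\log(100NW+25)$ bound with shift $-7$ in the exponent.

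The trailing sum is completely analogous: invoke the corresponding bounds of Corollary~\ref{cor:DPSSEigBounds} for $k=K,\ldots,N-1$ (valid since $K\ge\ceil{2NW}$), factor out the $k=K$ term, and reindex by $j=k-K$, so that the finite geometric sum $\sum_{j=0}^{N-1-K} e^{-j/\alpha}$ appears and is again bounded by $\alpha\, e^{1/\alpha}$. This produces $\tfrac{16}{\pi^2}\log(4N)\exp[-(K-\ceil{2NW}-2)/\alpha]$ from the shift ``$-1$'' pointwise bound and $\tfrac{20}{\pi^2}\log(100NW+25)\exp[-(K-\ceil{2NW}-7)/\beta]$ from the shift ``$-6$'' pointwise bound. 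There is no real obstacle here; the whole argument is a one-line geometric sum applied four times, with the only mildly delicate step being the observation that $\alpha e^{1/\alpha}$ (rather than $\alpha$) is the correct bound for $(1-e^{-1/\alpha})^{-1}$, which is what allows the shift in the exponent to advance by exactly one unit from the pointwise estimate to the summed estimate.
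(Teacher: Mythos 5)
Your proof is correct and takes essentially the same route as the paper: termwise application of the pointwise bounds from Corollary~\ref{cor:DPSSEigBounds}, a geometric-series sum, and the elementary estimate $1/(1-e^{-1/\alpha}) \le \alpha\,e^{1/\alpha}$ (which is the same as the paper's $e^{-x}/(1-e^{-x})\le 1/x$). The only cosmetic difference is that the paper extends the sum to $\pm\infty$ before summing while you factor out the boundary term first; both are equivalent, and you also correctly use the factor $10$ throughout the second variant where the paper's displayed derivation has a harmless typographical ``$8$'' in two intermediate lines.
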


Abreu and Romero \cite{Abreu17} have shown that bias of Thomson's multitaper spectral estimate \cite{Thomson82} (using $K$ tapers) is bounded by the sum of two terms, one which is $\lesssim W^2$ and another which is $\lesssim \tfrac{1}{K}\sum_{k = 0}^{K-1}(1-\lambda_k)$. The variance of this estimate is $\lesssim \tfrac{1}{K}$. By choosing the number of tapers to be $K = 2NW - O(\log(NW)\log(\tfrac{\log(NW)}{\delta}))$ for some small $\delta > 0$ instead of the typically used $\floor{2NW}-1$, one can significantly reduce the bias while only moderately increasing the variance. 

Davenport and Wakin \cite{DavenportWakin12} have shown that sampled sinusoids with frequency $|f| \le W$ can be represented by a linear combination of the first $K$ Slepian basis vectors with an average representation error of $\tfrac{1}{2W}\sum_{k = K}^{N-1}\lambda_k$. They present asymptotic bounds on this error which hold when $K = 2NW(1+\rho)$ Slepian basis vectors are used for some fixed constant $\rho > 0$. The second part of Corollary~\ref{cor:DPSSEigSumBounds} shows that we can obtain a small average representation error by using only $K = 2NW + O(\log(NW)\log(\tfrac{\log(NW)}{\delta}))$ Slepian basis vectors for some small $\delta > 0$.

Note that all of the above bounds which depend on $W$ can be easily refined if $W$ is close to $\tfrac{1}{2}$. Let $\mB' \in \R^{N \times N}$ be the prolate matrix with bandwidth parameter $\tfrac{1}{2}-W$, i.e. $\mB'[m,n] = \tfrac{\sin[2\pi(1/2-W)(m-n)]}{\pi(m-n)}$. One can check that $\mB' = \mD(\mId-\mB)\mD^*$ where $\mD \in \R^{N \times N}$ is a unitary diagonal matrix with entries $\mD[n,n] = (-1)^n$. Hence, the eigenvalues of $\mB'$ and $\mB$ are related by $\lambda_k(N,\tfrac{1}{2}-W) = 1-\lambda_{N-1-k}(N,W)$ for $k = 0,\ldots,N-1$. Therefore, for any $N \in \N$ and $\eps \in (0,\tfrac{1}{2})$, the width of the transition region is the same for bandwidths of $W$ and $\tfrac{1}{2}-W$. In particular, when $W \in (\tfrac{1}{4},\tfrac{1}{2})$, we can obtain stronger bounds on both $\#\{k : \eps < \lambda_k(N,W) < 1-\eps\}$ and $\lambda_k(N,W)$ by instead considering the bounds on $\#\{k : \eps < \lambda_k(N,\tfrac{1}{2}-W) < 1-\eps\}$ and $\lambda_k(N,\tfrac{1}{2}-W)$. 

Finally, we note that a result by Boulsane, Bourguiba, and Karoui \cite{Boulsane20} shows that the DPSS eigenvalues and the PSWF eigenvalues satisfy $\lambda_k(N,W) \to \widetilde{\lambda}_k(\pi NW)$ as $N \to \infty$ and $W \to 0^+$ with $NW$ held constant. Intuitively, this means that the continuous energy concentration problem is the limit of the discrete energy concentration problem as the discritization gets arbitrarily fine. With this result, the above non-asymptotic results on the DPSS eigenvalues which depend on $NW$ (and not just $N$) can be used to obtain the following non-asymptotic results on the PSWF eigenvalues.

\begin{theorem}
\label{thm:PSWFEigGap}
For any $c > 0$ and $\eps \in (0,\tfrac{1}{2})$, $$\#\left\{k : \eps < \widetilde{\lambda}_k < 1-\eps\right\} \le \dfrac{2}{\pi^2}\log\left(\dfrac{100c}{\pi}+25\right)\log\left(\dfrac{5}{\eps(1-\eps)}\right)+7.$$
\end{theorem}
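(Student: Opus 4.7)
The plan is to leverage the limit theorem of Boulsane, Bourguiba, and Karoui cited just before the statement, which says $\lambda_k(N, W) \to \widetilde{\lambda}_k(\pi N W)$ as $N \to \infty$ and $W \to 0^+$ with $NW$ held constant. By choosing a sequence of DPSS parameters $(N_j, W_j)$ with $\pi N_j W_j$ exactly equal to the target $c$, I can dominate the count of PSWF eigenvalues in $(\eps, 1-\eps)$ by a count of DPSS eigenvalues in $(\eps, 1-\eps)$, and then invoke Theorem~\ref{thm:BoundWithW}. The crucial feature enabling this reduction is that the right-hand side of Theorem~\ref{thm:BoundWithW} depends on $N$ and $W$ only through the product $NW$, so the bound survives the limit essentially unchanged.

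Concretely, fix $c > 0$ and $\eps \in (0, \tfrac{1}{2})$, and set $M = \#\{k : \eps < \widetilde{\lambda}_k(c) < 1-\eps\}$. Finiteness of $M$ follows from any of the bounds recalled in Section~\ref{sec:PrevPSWFBounds} (e.g.\ the third Bonami--Jamming--Karoui bound, which forces $\widetilde{\lambda}_k(c) \to 0$ super-exponentially). Let $k_1 < k_2 < \cdots < k_M$ be the indices realizing this count. For each integer $j > 2c/\pi$, set $N_j = j$ and $W_j = c/(\pi j)$, so that $N_j \to \infty$, $W_j \to 0^+$, $W_j \in (0, \tfrac{1}{2})$, and $\pi N_j W_j = c$. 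The cited convergence then gives $\lambda_{k_i}(N_j, W_j) \to \widetilde{\lambda}_{k_i}(c)$ as $j \to \infty$ for each $i = 1, \ldots, M$. Since each limit lies in the open interval $(\eps, 1-\eps)$, there exists $J$ such that for every $j \ge J$ all $M$ of the values $\lambda_{k_1}(N_j, W_j), \ldots, \lambda_{k_M}(N_j, W_j)$ simultaneously lie in $(\eps, 1-\eps)$, and therefore
\begin{equation*}
M \;\le\; \#\{k : \eps < \lambda_k(N_j, W_j) < 1-\eps\}.
\end{equation*}

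Applying Theorem~\ref{thm:BoundWithW} to the parameters $(N_j, W_j)$ bounds this right-hand side by $\tfrac{2}{\pi^2}\log(100 N_j W_j + 25)\log(5/(\eps(1-\eps))) + 7$, and substituting $N_j W_j = c/\pi$ produces precisely the claimed bound on $M$. I expect the only delicate point, rather than a true obstacle, to be the justification that finitely many pointwise convergences suffice to pass open-interval membership from the limit back to the sequence; this requires no uniformity in $k$ precisely because $M$ is already known to be finite, so one only needs to pick $j$ beyond the maximum of finitely many thresholds. An alternative way to sidestep even this step is a contradiction argument: if $M$ exceeded the claimed bound, then selecting any $B(c,\eps) + 1$ indices in the PSWF transition region would contradict Theorem~\ref{thm:BoundWithW} for all sufficiently large $j$.
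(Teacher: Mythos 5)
Your proof is correct, and it takes a route that is close in spirit but genuinely different in mechanism from the paper's. The paper also reduces to Theorem~\ref{thm:BoundWithW} via Lemma~\ref{lem:DPSS_PSWF}, but it does so \emph{quantitatively}: it uses the explicit bound $\bigl|\lambda_k(N,\tfrac{c}{\pi N})-\widetilde{\lambda}_k(c)\bigr| \le \delta_{c,N}$ uniformly in $k$ to conclude that every PSWF eigenvalue in $(\eps,1-\eps)$ forces the corresponding DPSS eigenvalue into the slightly enlarged window $(\eps-\delta_{c,N},\,1-\eps+\delta_{c,N})$, then applies Theorem~\ref{thm:BoundWithW} at the shifted threshold $\eps-\delta_{c,N}$ (valid once $\delta_{c,N}<\eps$), and finally sends $N\to\infty$ so that $\delta_{c,N}\to 0$ and the threshold recovers $\eps$. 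You instead keep the threshold at $\eps$ throughout and use only the \emph{qualitative} pointwise convergence $\lambda_k(N,\tfrac{c}{\pi N}) \to \widetilde{\lambda}_k(c)$, together with the a priori finiteness of $M=\#\{k:\eps<\widetilde{\lambda}_k(c)<1-\eps\}$, to pull the finitely many indices $k_1,\ldots,k_M$ simultaneously into the open DPSS transition window for one sufficiently large $j$. Both approaches are valid and lean on the same two ingredients (Lemma~\ref{lem:DPSS_PSWF} and Theorem~\ref{thm:BoundWithW}). Yours avoids any bookkeeping about the size of $\delta_{c,N}$ or the validity of the shifted threshold, at the modest cost of needing a separate argument for the finiteness of $M$ (which you correctly supply, and which also follows trivially from $\sum_k\widetilde{\lambda}_k = \tfrac{2c}{\pi}<\infty$). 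The paper's quantitative version has the minor advantage that it never needs to invoke finiteness of $M$ as a precondition, since the bound is obtained directly for each finite $N$ and then passed to the limit.
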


\begin{cor}
\label{cor:PSWFEigBounds}
For any $c > 0$ and $\eps \in (0,\tfrac{1}{2})$, $$\widetilde{\lambda}_k \ge 1-10\exp\left[-\dfrac{\floor{\tfrac{2c}{\pi}}-k-7}{\tfrac{2}{\pi^2}\log\left(\tfrac{100c}{\pi}+25\right)}\right] \quad \text{for} \quad 0 \le k \le \floor{\dfrac{2c}{\pi}}-1,$$ and $$\widetilde{\lambda}_k \le 10\exp\left[-\dfrac{k-\ceil{\tfrac{2c}{\pi}}-6}{\tfrac{2}{\pi^2}\log\left(\tfrac{100c}{\pi}+25\right)}\right] \quad \text{for} \quad k \ge \ceil{\dfrac{2c}{\pi}}.$$
\end{cor}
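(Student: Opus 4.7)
The plan is to obtain Corollary~\ref{cor:PSWFEigBounds} as a limiting case of the second of the two bounds given in Corollary~\ref{cor:DPSSEigBounds} --- the one whose right-hand side depends on $N$ and $W$ only through the product $NW$. For a fixed $c > 0$, I would set $W_N := c/(\pi N)$, so that $\pi N W_N = c$ for every $N$. For all $N$ sufficiently large, $W_N \in (0,\tfrac{1}{2})$, and the product $NW_N = c/\pi$ is constant. The limit result of Boulsane, Bourguiba, and Karoui mentioned at the end of Section~\ref{sec:NewResults} then yields $\lambda_k(N, W_N) \to \widetilde{\lambda}_k(c)$ as $N \to \infty$ for each fixed $k \in \N$.

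For the upper bound, I would fix any integer $k \ge \ceil{2c/\pi}$. For all $N \ge k+1$, we have $\ceil{2N W_N} = \ceil{2c/\pi} \le k \le N-1$, so Corollary~\ref{cor:DPSSEigBounds} applies and gives
$$\lambda_k(N, W_N) \le 10 \exp\left[-\dfrac{k - \ceil{2c/\pi} - 6}{\tfrac{2}{\pi^2}\log(100 c/\pi + 25)}\right].$$
The right-hand side is independent of $N$, so passing to the limit $N \to \infty$ yields the claimed upper bound on $\widetilde{\lambda}_k(c)$. The lower bound is handled in exactly the same way: fix $0 \le k \le \floor{2c/\pi}-1$, apply the second term in the $\min$ from the lower-bound part of Corollary~\ref{cor:DPSSEigBounds} at $W = W_N$ (valid as soon as $N$ is large enough that $\floor{2NW_N}-1 = \floor{2c/\pi}-1 \ge k$), and send $N \to \infty$.

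The only real choice is to discard the part of the $\min$ whose denominator involves $\log(4N)$ and keep only the part involving $\log(100NW+25)$; the former diverges under the scaling $W_N = c/(\pi N)$ and would yield only a trivial bound in the limit, while the latter becomes $\log(100c/\pi + 25)$, exactly matching the statement of Corollary~\ref{cor:PSWFEigBounds}. Beyond this, no further analysis is needed --- the argument is a direct transfer of the DPSS bounds to their continuous-time counterparts via the known convergence $\lambda_k(N,W) \to \widetilde{\lambda}_k(\pi NW)$, and there is no significant obstacle at this stage since all of the hard work has already been done in establishing Theorem~\ref{thm:BoundWithW} and Corollary~\ref{cor:DPSSEigBounds}.
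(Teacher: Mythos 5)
Your proposal is correct and follows essentially the same route as the paper: fix $W = c/(\pi N)$, apply the $\log(100NW+25)$ branch of Corollary~\ref{cor:DPSSEigBounds} (which becomes $N$-independent under this scaling), and pass to the limit $N\to\infty$ using $\lambda_k(N,c/(\pi N)) \to \widetilde{\lambda}_k(c)$ from Lemma~\ref{lem:DPSS_PSWF}. The only minor slip is that the condition $\floor{2NW_N}-1 \ge k$ does not actually depend on $N$ under this scaling (it is just $k \le \floor{2c/\pi}-1$); the genuine $N$-largeness requirement is $N > 2c/\pi$ so that $W_N \in (0,\tfrac12)$ and $N-1 \ge k$.
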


\begin{cor}
\label{cor:PSWFEigSumBounds}
For any $c > 0$, we have 
$$\sum_{k = 0}^{K-1}\left(1-\widetilde{\lambda}_k\right) \le \dfrac{20}{\pi^2}\log\left(\dfrac{100c}{\pi}+25\right)\exp\left[-\dfrac{\floor{\tfrac{2c}{\pi}}-K-7}{\tfrac{2}{\pi^2}\log(\tfrac{100c}{\pi}+25)}\right] \quad \text{for} \quad 1 \le K \le \floor{\dfrac{2c}{\pi}},$$ and
$$\sum_{k = K}^{\infty}\widetilde{\lambda}_k \le \dfrac{20}{\pi^2}\log\left(\dfrac{100c}{\pi}+25\right)\exp\left[-\dfrac{K-\ceil{\tfrac{2c}{\pi}}-7}{\tfrac{2}{\pi^2}\log(\tfrac{100c}{\pi}+25)}\right] \quad \text{for} \quad K \ge \ceil{\dfrac{2c}{\pi}}.$$
\end{cor}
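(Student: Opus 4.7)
The plan is to obtain Corollary~\ref{cor:PSWFEigSumBounds} as a routine consequence of the individual PSWF eigenvalue bounds in Corollary~\ref{cor:PSWFEigBounds} by summing a geometric series. Let me write $\alpha := \tfrac{2}{\pi^2}\log(\tfrac{100c}{\pi}+25)$ to keep the exponential decay rate compact. Then Corollary~\ref{cor:PSWFEigBounds} can be rewritten as $1-\widetilde{\lambda}_k \le 10\exp[-(\floor{2c/\pi}-k-7)/\alpha]$ on the leading range $0 \le k \le \floor{2c/\pi}-1$, and $\widetilde{\lambda}_k \le 10\exp[-(k-\ceil{2c/\pi}-6)/\alpha]$ on the trailing range $k \ge \ceil{2c/\pi}$.

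Next I would sum these geometric tails. For the trailing sum, shifting the index $k = K + j$ and factoring out the $j=0$ term gives
\[
\sum_{k=K}^{\infty}\widetilde{\lambda}_k \;\le\; 10\exp\!\left[-\frac{K-\ceil{2c/\pi}-6}{\alpha}\right]\sum_{j=0}^{\infty}e^{-j/\alpha} \;=\; \frac{10\,\exp[-(K-\ceil{2c/\pi}-6)/\alpha]}{1-e^{-1/\alpha}}.
\]
An analogous reindexing $k' = K-1-k$ in the leading sum yields
\[
\sum_{k=0}^{K-1}(1-\widetilde{\lambda}_k) \;\le\; \frac{10\,\exp[-(\floor{2c/\pi}-K-6)/\alpha]}{1-e^{-1/\alpha}}.
\]

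Finally I would apply the elementary inequality $1/(1-e^{-1/\alpha}) \le \alpha\, e^{1/\alpha}$, which follows from $e^{1/\alpha}-1\ge 1/\alpha$ (a direct consequence of $e^x\ge 1+x$). Multiplying the extra factor $e^{1/\alpha}$ into the existing exponentials shifts the exponent by exactly $1$, converting the numerators $K-\ceil{2c/\pi}-6$ and $\floor{2c/\pi}-K-6$ into $K-\ceil{2c/\pi}-7$ and $\floor{2c/\pi}-K-7$ respectively; the coefficient $10\alpha$ then equals $\tfrac{20}{\pi^2}\log(\tfrac{100c}{\pi}+25)$, matching the claimed bounds exactly.

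There is essentially no hard step here; the only mild subtlety is that for the leading sum the range condition $1\le K\le\floor{2c/\pi}$ means the smallest exponent argument $\floor{2c/\pi}-K-6$ may be negative, but this only makes some individual terms in the geometric bound exceed $1$, and the geometric estimate remains valid (and still gives a meaningful bound once $\floor{2c/\pi}-K$ is modestly large). The parallel argument for Corollary~\ref{cor:DPSSEigSumBounds} is identical in structure, so no additional machinery is needed.
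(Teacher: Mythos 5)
Your proof is correct and takes essentially the same route as the paper: bound each summand via Corollary~\ref{cor:PSWFEigBounds}, extend to an infinite geometric series, and absorb the geometric factor $\tfrac{1}{1-e^{-1/\alpha}}$ by a bound equivalent to the paper's inequality $\tfrac{e^{-x}}{1-e^{-x}} \le \tfrac{1}{x}$, which shifts the exponent by one and produces the coefficient $10\alpha = \tfrac{20}{\pi^2}\log(\tfrac{100c}{\pi}+25)$.
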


We note that the non-asymptotic bound in Theorem~\ref{thm:PSWFEigGap} is similar in form to the asymptotic bound by Landau and Widom \cite{LandauWidom80} in that it scales like $O(\log(c) \log(\tfrac{1}{\eps}))$ and has the correct leading constant $\tfrac{2}{\pi^2}$. Furthermore, the constants in this bound are all specified and are mild. As such, this is a substantial improvement over the existing non-asymptotic bounds. 

The bounds in Corollary~\ref{cor:PSWFEigBounds} show that the PSWF eigenvalues $\widetilde{\lambda}_k$ approach $1$ and $0$ exponentially as $k$ moves away from the time-bandwidth product $\tfrac{2c}{\pi}$. Also, these bounds are useful (i.e. something stronger than $0 < \widetilde{\lambda}_k < 1$) for all but $O(\log(c))$ values of $k$.

\section{Proof of Bounds on the Number of DPSS Eigenvalues in the Transition Region (Theorems~\ref{thm:BoundWithoutW} and \ref{thm:BoundWithW})}
\label{sec:MainProof}

\subsection{Proof overview}
For any rectangular matrix $\mX \in \C^{M \times N}$. we use the notation $\sigma_k(\mX)$ to denote the $k^{\text{th}}$ largest singular value of $\mX$. If $k > \min\{M,N\}$, we define $\sigma_k(\mX) = 0$. Also, for a Hermitian matrix $\mA \in \C^{N \times N}$, we use the notation $\mu_k(\mA)$ to denote $k^{\text{th}}$ largest eigenvalue of a symmetric matrix. Again, if $k > N$, we define $\mu_k(\mA) = 0$. To be consistent with standard notation, we define $\lambda_k = \mu_{k+1}(\mB)$ for $k \in \{0,\ldots,N-1\}$, i.e. $\lambda_k$ is the $(k+1)^{\text{th}}$ largest eigenvalue of the $N \times N$ prolate matrix $\mB$ with bandwidth parameter $W$, which is defined in (\ref{eq:Prolate}). Although both $\mB$ and $\lambda_k$ depend on $N$ and $W$, our notation will omit this dependence for convenience. 

We first sketch a non-rigorous outline of our proof. We aim to show that $\mB-\mB^2$ has a low numerical rank. Therefore, very few of the eigenvalues of $\mB-\mB^2$ are not near $0$, and thus, very few of the eigenvalues of $\mB$ are not near $1$ or $0$. To show $\mB-\mB^2$ has a low numerical rank, recall that $\mB$ is a matrix representation of the self-adjoint operator $\calT_N\calB_W\calT_N$. Hence, $\mB-\mB^2$ is a matrix representation of the operator \begin{align*}&\calT_N\calB_W\calT_N-(\calT_N\calB_W\calT_N)^2 = \calT_N\calB_W\calB_W\calT_N-\calT_N\calB_W\calT_N\calB_W\calT_N \\ &= \calT_N\calB_W(\calI-\calT_N)\calB_W\calT_N = \calT_N\calB_W(\calI-\calT_N)(\calI-\calT_N)\calB_W\calT_N\end{align*} Thus showing that $\mB-\mB^2$ has a low numerical rank is equivalent to showing that $(\calI-\calT_N)\calB_W\calT_N$ has a low numerical rank. This operator satisfies $$\inner{\delta_{\ell},(\calI-\calT_N)\calB_W\calT_N\delta_n} = \dfrac{\sin[2\pi W(\ell-n)]}{\pi(\ell-n)} \quad \text{for} \quad \ell \in \Z\setminus\{0,\ldots,N-1\}, n \in \{0,\ldots,N-1\},$$ where $\{\delta_n\}_{n \in \Z}$ are the Euclidean basis sequences for $\ell_2(\Z)$. If we let $\mX$ be the ``matrix'' representation of this operator with respect to the Euclidean basis, then the entries of $\mX$ are a smooth function of the row and column indices, and $\mX$ has what is known as a low displacement-rank structure. These facts can be exploited to show that $\mX$, and thus also $(\calI-\calT_N)\calB_W\calT_N$,  has a low numerical rank. Proving this rigorously requires that we truncate $(\calI-\calT_N)\calB_W\calT_N$ to a finite dimensional subspace, and take the limit as the dimension goes to infinity. 

The following lemma allows us to start a formal and rigorous version of the above argument.
\begin{lemma}
\label{lem:Main}
Suppose for some $r \in \{0,\ldots,N-1\}$ and $L_0 \in \N$, there exists a sequence of matrices $\mX_L \in \R^{2L \times N}$ for $L = L_0, L_0+1, \ldots, $ such that:
\begin{itemize}
\item $\displaystyle\lim_{L \to \infty}\left\|(\mB-\mB^2) - \mX_L^*\mX_L\right\|_F^2 = 0$,

\item $\sigma_{r+1}(\mX_L) \le \sqrt{\eps(1-\eps)}$ for all $L \ge L_0$.
\end{itemize}
Then, $\#\{k : \eps < \lambda_k < 1-\eps\} \le r$.
\end{lemma}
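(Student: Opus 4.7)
The plan is to translate the hypotheses, which concern the singular values of the approximants $\mX_L$, into an eigenvalue count for $\mB$ itself via Weyl's inequality. The key observation is that $\mB$ is symmetric, so $\mB$ and $\mB^2$ share an orthonormal eigenbasis, and the eigenvalues of $\mB-\mB^2$ are exactly $\{\lambda_k(1-\lambda_k)\}_{k=0}^{N-1}$. The scalar function $f(x) = x(1-x)$ is concave on $[0,1]$ with $f(\eps) = f(1-\eps) = \eps(1-\eps)$, so $f(\lambda_k) > \eps(1-\eps)$ holds precisely when $\lambda_k \in (\eps, 1-\eps)$. Consequently
$$\#\{k : \eps < \lambda_k < 1-\eps\} = \#\{k : \mu_{k+1}(\mB - \mB^2) > \eps(1-\eps)\},$$
and it suffices to prove that $\mu_{r+1}(\mB - \mB^2) \le \eps(1-\eps)$.

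Next I would exploit the two hypotheses on $\mX_L$. The Gram matrix $\mX_L^*\mX_L \in \R^{N\times N}$ is positive semidefinite with eigenvalues equal to the squared singular values of $\mX_L$ (padded with zeros up to length $N$), so the singular-value hypothesis yields
$$\mu_{r+1}(\mX_L^*\mX_L) = \sigma_{r+1}(\mX_L)^2 \le \eps(1-\eps) \quad \text{for every} \quad L \ge L_0.$$
Since the Frobenius norm dominates the operator norm, Weyl's inequality for symmetric matrices gives
$$\bigl|\mu_{r+1}(\mB-\mB^2) - \mu_{r+1}(\mX_L^*\mX_L)\bigr| \le \bigl\|(\mB-\mB^2) - \mX_L^*\mX_L\bigr\|_{\mathrm{op}} \le \bigl\|(\mB-\mB^2) - \mX_L^*\mX_L\bigr\|_F,$$
and the right-hand side tends to zero as $L \to \infty$ by the first hypothesis. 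Passing to the limit therefore yields $\mu_{r+1}(\mB-\mB^2) \le \eps(1-\eps)$, which combined with the counting identity above completes the argument.

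This lemma is essentially bookkeeping and has no substantive obstacle; its role is to encapsulate a clean reduction so that the rest of Section~\ref{sec:MainProof} can focus on the real task, namely constructing the approximants $\mX_L$. The difficulty is deferred to that construction: one must exhibit $\mX_L \to (\mB-\mB^2)^{1/2}$-like factors with the right numerical rank, where $r$ grows only like $O\!\left(\log(NW)\log\tfrac{1}{\eps(1-\eps)}\right)$, by exploiting the displacement-rank and smoothness structure of $(\calI-\calT_N)\calB_W\calT_N$ sketched informally in the proof overview. The present lemma simply guarantees that any such construction mechanically converts into the desired eigenvalue count.
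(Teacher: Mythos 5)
Your proof is correct and follows essentially the same route as the paper: identify the eigenvalues of $\mB-\mB^2$ as $\{\lambda_k(1-\lambda_k)\}$, observe the biconditional $\lambda\in(\eps,1-\eps)\iff\lambda(1-\lambda)>\eps(1-\eps)$, and pass the bound on $\sigma_{r+1}(\mX_L)^2=\mu_{r+1}(\mX_L^*\mX_L)$ to $\mu_{r+1}(\mB-\mB^2)$ in the limit $L\to\infty$. The only cosmetic difference is that you make the eigenvalue continuity step explicit via Weyl's inequality (and use concavity of $x(1-x)$ rather than its monotonicity and symmetry about $1/2$), whereas the paper simply asserts the limit.
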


\begin{proof}
By the first property, $\displaystyle\lim_{L \to \infty}\mu_{r+1}(\mX_L^*\mX_L)$ exists and is equal to $\mu_{r+1}(\mB-\mB^2)$. Then by using the fact that $\mu_{r+1}(\mX_L^*\mX_L) = \sigma_{r+1}(\mX_L)^2$ for all $L \ge L_0$ along with the second property, we have $$\mu_{r+1}(\mB-\mB^2) = \displaystyle\lim_{L \to \infty}\mu_{r+1}(\mX_L^*\mX_L) = \displaystyle\lim_{L \to \infty}\sigma_{r+1}(\mX_L)^2 \le \eps(1-\eps).$$ The eigenvalues of $\mB-\mB^2$ are $\{\lambda_k(1-\lambda_k)\}_{k = 0}^{N-1}$. Also, the function $\lambda \mapsto  \lambda(1-\lambda)$ is increasing for $\lambda < \tfrac{1}{2}$ and decreasing for $\lambda > \tfrac{1}{2}$ and symmetric about $\lambda = \tfrac{1}{2}$. As a result, $\eps < \lambda < 1-\eps$ if and only if $\lambda(1-\lambda) > \eps(1-\eps)$. Therefore, $$\#\{k : \eps < \lambda_k < 1-\eps\} = \#\{k : \lambda_k(1-\lambda_k) > \eps(1-\eps)\} = \#\{k : \mu_k(\mB-\mB^2) > \eps(1-\eps)\} \le r.$$
\end{proof}

We will prove both Theorem~\ref{thm:BoundWithoutW} and Theorem~\ref{thm:BoundWithW} by using Lemma~\ref{lem:Main}. In Section~\ref{sec:ConstructXL}, we construct a sequence of matrices $\mX_L \in \R^{2L \times N}$ which satisfies the first property above. In \cref{sec:ProofThm1}, we show that the singular values of each matrix $\mX_L$ decays exponentially, which allows us to obtain the bound in Theorem~\ref{thm:BoundWithoutW}. In Section~\ref{sec:ProofThm2}, we refine the rate at which the singular values of each $\mX_L$ decay, which allows us to obtain the bound in Theorem~\ref{thm:BoundWithW}.

\subsection{Constructing the sequence of matrices $\mX_L$}
\label{sec:ConstructXL}

First, we state an identity involving the sinc function. For any $W \in (0,\tfrac{1}{2})$ and any $m,n \in \Z$, $$\sum_{\ell = -\infty}^{\infty}\dfrac{\sin[2\pi W(\ell-m)]}{\pi(\ell-m)}\dfrac{\sin[2\pi W(\ell-n)]}{\pi(\ell-n)} = \dfrac{\sin[2\pi W(m-n)]}{\pi(m-n)}.$$ By using this identity, we can write the entries of $\mB-\mB^2$ as: 
\begin{align}
(\mB-\mB^2)[m,n] &= \mB[m,n] - \sum_{\ell = 0}^{N-1}\mB[m,\ell]\mB[\ell,n] \notag
\\
&= \dfrac{\sin[2\pi W(m-n)]}{\pi(m-n)} - \sum_{\ell = 0}^{N-1}\dfrac{\sin[2\pi W(m-\ell)]}{\pi(m-\ell)}\dfrac{\sin[2\pi W(\ell-n)]}{\pi(\ell-n)} \notag
\\
&= \sum_{\ell = -\infty}^{\infty}\dfrac{\sin[2\pi W(\ell-m)]}{\pi(\ell-m)}\dfrac{\sin[2\pi W(\ell-n)]}{\pi(\ell-n)} - \sum_{\ell = 0}^{N-1}\dfrac{\sin[2\pi W(\ell-m)]}{\pi(\ell-m)}\dfrac{\sin[2\pi W(\ell-n)]}{\pi(\ell-n)} \notag
\\
&= \sum_{\ell = -\infty}^{-1}\dfrac{\sin[2\pi W(\ell-m)]}{\pi(\ell-m)}\dfrac{\sin[2\pi W(\ell-n)]}{\pi(\ell-n)} + \sum_{\ell = N}^{\infty}\dfrac{\sin[2\pi W(\ell-m)]}{\pi(\ell-m)}\dfrac{\sin[2\pi W(\ell-n)]}{\pi(\ell-n)} \label{eq:BminusB2}
\end{align}
where the rearranging of terms is valid since the summands decay like $O(|\ell|^{-2})$ as $\ell \to \pm \infty$, and thus, all the sums are absolutely convergent. 

For each integer $L \ge 1$, we define an index set $$\setI_L = \{-L,-L+1\ldots,-2,-1\}\cup\{N,N+1,\ldots,N+L-2,N+L-1\}$$ and we define $\mX_L \in \R^{2L \times N}$ by $$\mX_L[\ell,n] = \dfrac{\sin[2\pi W(\ell-n)]}{\pi(\ell-n)} \quad \text{for} \quad \ell \in \setI_L \ \text{and} \ n \in \{0,\ldots,N-1\}.$$ Note that we index the rows of $\mX_L$ by $\setI_L$ instead of the usual $0,1,\ldots,2L-1$ for convenience. We will also index the rows and/or columns of other matrices with dimension $2L$ by $\setI_L$. With this definition, the entries of $\mX_L^*\mX_L$ are 
\begin{align}
(\mX_L^*\mX_L)[m,n] &= \sum_{\ell \in \setI_L}\mX_L[\ell,m]\mX_L[\ell,n] \notag
\\
&= \sum_{\ell = -L}^{-1}\dfrac{\sin[2\pi W(\ell-m)]}{\pi(\ell-m)}\dfrac{\sin[2\pi W(\ell-n)]}{\pi(\ell-n)} + \sum_{\ell = N}^{N+L-1}\dfrac{\sin[2\pi W(\ell-m)]}{\pi(\ell-m)}\dfrac{\sin[2\pi W(\ell-n)]}{\pi(\ell-n)} \label{eq:XX}
\end{align}

From Equations~\ref{eq:BminusB2} and \ref{eq:XX} above, $\displaystyle\lim_{L \to \infty}(\mX_L^*\mX_L)[m,n] = (\mB-\mB^2)[m,n]$ for each of the $N^2$ entries. Therefore, $$\displaystyle\lim_{L \to \infty}\left\|(\mB-\mB^2) - \mX_L^*\mX_L\right\|_F^2 = \lim_{L \to \infty} \sum_{m = 0}^{N-1}\sum_{n = 0}^{N-1}\left|(\mB-\mB^2)[m,n] - (\mX_L^*\mX_L)[m,n]\right|^2 = 0.$$ This shows that the sequence of matrices $\mX_L \in \R^{2L \times N}$ satisfies the first property of Lemma~\ref{lem:Main}. We will now focus on bounding the singular values of each $\mX_L$ in order to prove that these matrices $\mX_L$ satisfy the second property of Lemma~\ref{lem:Main}.

\subsection{Proof of Theorem~\ref{thm:BoundWithoutW}}
\label{sec:ProofThm1}
In \cite{BT19}, Beckermann and Townsend showed that matrices with a low-rank displacement have rapidly decaying singular values provided the spectra of the left and right displacement matrices are well separated. More specifically, suppose a matrix $\mX \in \C^{M \times N}$ satisfies the displacement equation $$\mC\mX-\mX\mD = \mU\mV^*$$ where $\mC \in \C^{M \times M}$ and $\mD \in \C^{N \times N}$ are normal matrices and $\mU \in \C^{M \times \nu}$ and $\mD \in \C^{N \times \nu}$. If there are closed, disjoint subsets $E,F$ of $\C$ such that $\Spec(\mC) \subset E$ and $\Spec(\mD) \subset F$ then the singular values of $\mX$ satisfy $$\sigma_{\nu k+1}(\mX) \le \sigma_1(\mX)Z_k(E,F)$$ for all integers $k \ge 0$, where $Z_k(E,F)$ are the Zolotarev numbers \cite{Zolotarev1877} for the sets $E$ and $F$. As a rule of thumb, when $E$ and $F$ are ``well-separated'', $Z_k(E,F)$ decays exponentially with $k$. For more details about Zolotarev numbers, see Appendix~\ref{sec:Zolotarev}.

In Appendix~\ref{sec:LowRankDisplacement}, we build on the work of Beckermann and Townsend to prove the following theorem.
\begin{theorem}
\label{thm:UnboundedSpec}
Suppose $\mX \in \C^{M \times N}$ satisfies the displacement equation $$\mC\mX-\mX\mD = \mU\mV^*$$ where $\mC \in \C^{M \times M}$ and $\mD \in \C^{N \times N}$ are normal matrices with real eigenvalues and $\mU \in \C^{M \times \nu}$ and $\mV \in \C^{N \times \nu}$. If $\text{Spec}(\mC) \subset (-\infty,c_1]\cup[c_2,\infty)$ and $\text{Spec}(\mD) \subset [d_1,d_2]$ where $c_1 < d_1 < d_2 < c_2$, then for any integer $k \ge 0$, $$\sigma_{\nu k+1}(\mX) \le 4\|\mX\|\exp\left[-\dfrac{\pi^2 k}{\log(16\gamma)}\right] \quad \text{where} \quad \gamma = \dfrac{(c_2-d_1)(d_2-c_1)}{(c_2-d_2)(d_1-c_1)}.$$
\end{theorem}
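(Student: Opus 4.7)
My plan is to reduce the statement to the standard Beckermann--Townsend bound for matrices with disjoint \emph{bounded} real spectra, via a real Cayley-type transformation of $\mC$ and $\mD$, and then to invoke the classical Zolotarev-number estimate for two disjoint bounded real intervals expressed in terms of the cross-ratio $\gamma$.

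Concretely, I would pick any real $\zeta$ lying in one of the two spectral gaps (say $\zeta \in (c_1, d_1)$) and form $\mC' := (\mC-\zeta\mI)^{-1}$ and $\mD' := (\mD-\zeta\mI)^{-1}$. Both remain normal, with purely real and now \emph{bounded} spectra. Rewriting the given identity as $(\mC-\zeta\mI)\mX - \mX(\mD-\zeta\mI) = \mU\mV^*$ and multiplying on the left by $\mC'$ and on the right by $\mD'$ yields
\begin{equation*}
\mC'\mX - \mX\mD' \;=\; -\mC'\mU\mV^*\mD',
\end{equation*}
still a displacement equation of rank at most $\nu$. Setting $\psi(z) := (z-\zeta)^{-1}$ and tracking its monotonicity on each side of $\zeta$, one verifies that $\Spec(\mD') \subset [\psi(d_2),\psi(d_1)]$ lies strictly to the right of $\Spec(\mC') \subset [\psi(c_1),\psi(c_2)]$ on the real line, separated by a positive gap. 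The standard Beckermann--Townsend argument (sketched just before the theorem statement) then applies verbatim and gives
\begin{equation*}
\sigma_{\nu k + 1}(\mX) \;\le\; \|\mX\|\cdot Z_k\bigl(\Spec(\mD'),\Spec(\mC')\bigr).
\end{equation*}

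Since composition with any M\"obius map preserves the class of type-$(k,k)$ rational functions, the Zolotarev number $Z_k$ is invariant under such maps and therefore depends only on the cross-ratio of the four endpoints $c_1, d_1, d_2, c_2$---which is precisely the quantity $\gamma$ appearing in the theorem. Plugging in the classical two-interval Zolotarev estimate $Z_k \le 4\exp[-\pi^2 k / \log(16\gamma)]$ completes the proof.

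I expect the principal technical obstacle to be the Zolotarev estimate itself with the stated constants $4$ and $16$; this is a known but delicate consequence of Jacobi elliptic-function identities, and I would package it as a self-contained lemma in Appendix~\ref{sec:Zolotarev}. The remaining checks---disjointness of the two transformed spectra, equality of the transformed cross-ratio with the $\gamma$ of the theorem, and symmetry between the two possible choices $\zeta \in (c_1, d_1)$ versus $\zeta \in (d_2, c_2)$---are routine.
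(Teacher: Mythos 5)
Your argument is correct and takes a genuinely different route from the paper's. The paper applies Theorem~\ref{thm:SingularValueDecay} directly with the \emph{unbounded} spectral set $E = (-\infty,c_1]\cup[c_2,\infty)$ (viewed as a closed subset of the Riemann sphere containing $\infty$) and then invokes Corollary~\ref{cor:UnboundedIntervals}, a Zolotarev estimate stated specifically for that unbounded configuration. You instead conjugate the displacement equation through the Cayley-type map $\psi(z)=(z-\zeta)^{-1}$ with $\zeta$ in a spectral gap, obtaining normal displacement matrices $\mC'$, $\mD'$ with \emph{bounded} disjoint real spectra, and then cite the already-stated bounded-interval result (Theorem~\ref{thm:BoundedSpec}/Corollary~\ref{cor:BoundedIntervals}). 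Your route avoids ever needing a separate Zolotarev bound for an unbounded set, at the cost of manipulating the displacement equation; the paper's route keeps the matrices fixed and absorbs the unboundedness into the rational-approximation problem. Both hinge on the same ingredients (the Beckermann--Townsend singular-value bound, Möbius invariance of $Z_k$, and the classical two-interval Zolotarev estimate). Two points you should nail down in a finished write-up: first, make explicit that the new right-hand side $-\mC'\mU\mV^*\mD' = (-\mC'\mU)(\mD'^*\mV)^*$ is still a rank-$\nu$ outer product, so the transformed equation is admissible; second, rather than appealing abstractly to ``$Z_k$ depends only on the cross-ratio,'' compute the cross-ratio $\gamma'=\frac{(\psi(d_2)-\psi(c_1))(\psi(d_1)-\psi(c_2))}{(\psi(d_1)-\psi(c_1))(\psi(d_2)-\psi(c_2))}$ for the transformed endpoints and verify the $\zeta$-dependence cancels to give exactly $\gamma$, matching the formula in Corollary~\ref{cor:BoundedIntervals}.
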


We now show that the matrices $\mX_L$ defined in Section~\ref{sec:ConstructXL} satisfy a low-rank displacement equation, and use Theorem~\ref{thm:UnboundedSpec} to bound their singular values. Define a diagonal matrix $\mD \in \R^{N \times N}$ by $\mD[n,n] = n$ for $n \in \{0,\ldots,N-1\}$. For each integer $L \ge 1$, define a diagonal matrix $\mC_L \in \R^{2L \times 2L}$ by $\mC_L[\ell,\ell] = \ell$ for $\ell \in \setI_L$ (again, we index $\mC_L$ by $\ell \in \setI_L$ for convenience). With this definition, we have 
\begin{align*}
(\mC_L\mX_L-\mX_L\mD)[\ell,n] &= \mC_L[\ell,\ell]\mX_L[\ell,n] - \mX_L[\ell,n]\mD[n,n]
\\
&= \ell \cdot \dfrac{\sin[2\pi W(\ell-n)]}{\pi(\ell-n)} - \dfrac{\sin[2\pi W(\ell-n)]}{\pi(\ell-n)} \cdot n
\\
&= \dfrac{1}{\pi}\sin[2\pi W(\ell-n)]
\\
&= \dfrac{1}{\pi}\left[\sin(2\pi W\ell)\cos(2\pi Wn) - \cos(2\pi W\ell)\sin(2\pi Wn)\right].
\end{align*}

From this, it is clear that we can factor $$\mC_L\mX_L-\mX_L\mD = \mU_L\mV^*$$ where $\mU_L \in \R^{2L \times 2}$ is defined by $$\mU_L[\ell,0] = \dfrac{1}{\sqrt{\pi}}\sin(2\pi W\ell) \quad \text{and} \quad \mU_L[\ell,1] = \dfrac{1}{\sqrt{\pi}}\cos(2\pi W\ell)\quad \text{for} \quad\ell \in \setI_L,$$ and $\mV \in \R^{N \times 2}$ is defined by $$\mV[n,0] = \dfrac{1}{\sqrt{\pi}}\cos(2\pi Wn) \quad \text{and} \quad \mV[n,1] = -\dfrac{1}{\sqrt{\pi}}\sin(2\pi Wn) \quad \text{for} \quad n \in \{0,\ldots,N-1\}.$$ In other words, $\mX_L$ has a rank-$2$ displacement with respect to the matrices $\mC_L$ and $\mD$. 

Since $\Spec(\mC_L) = \setI_L \subset (-\infty,-1] \cup [N,\infty)$ and $\Spec(\mD) = \{0,\ldots,N-1\} \subset [0,N-1],$ we can apply Theorem~\ref{thm:UnboundedSpec} with parameters $c_1 = -1$, $d_1 = 0$, $d_2 = N-1$, $c_2 = N$, and $\nu = 2$. The theorem tells us that for every integer $k \ge 0$, $$\sigma_{2k+1}(\mX_L) \le 4\|\mX_L\|\exp\left[-\dfrac{\pi^2 k}{\log(16\gamma)}\right] \quad \text{where} \quad \gamma = \dfrac{(c_2-d_1)(d_2-c_1)}{(c_2-d_2)(d_1-c_1)} = N^2.$$

For any $L \ge 1$, $\mX_L$ is a submatrix of $\mX_{L+1}$, and so, $\mX_L^*\mX_L \preceq \mX_{L+1}^*\mX_{L+1}$. Hence, $\mX_L^*\mX_L \preceq \displaystyle\lim_{L \to \infty}\mX_L^*\mX_L = \mB-\mB^2$. Therefore, $$\|\mX_L\|^2 = \|\mX_L^*\mX_L\| \le \|\mB-\mB^2\| = \displaystyle\max_{k}\left[\lambda_k-\lambda_k^2\right] \le \max_{0 \le \lambda \le 1}[\lambda-\lambda^2] = \dfrac{1}{4},$$ and thus, $\|\mX_L\| \le \tfrac{1}{2}$ for all $L \ge 1$. Substituting $\gamma = N^2$ and $\|\mX_L\| \le \tfrac{1}{2}$ into the above bound yields $$\sigma_{2k+1}(\mX_L) \le 2\exp\left[-\dfrac{\pi^2 k}{\log(16N^2)}\right]$$
for all integers $k \ge 0$. So, if we set $$k = \ceil{\dfrac{1}{\pi^2}\log(16N^2)\log\left(\dfrac{2}{\sqrt{\eps(1-\eps)}}\right)} = \ceil{\dfrac{1}{\pi^2}\log(4N)\log\left(\dfrac{4}{\eps(1-\eps)}\right)},$$ we obtain $\sigma_{2k+1}(\mX_L) \le \sqrt{\eps(1-\eps)}$ for all $L \ge 1$. 

This proves the second property in Lemma~\ref{lem:Main} for $r = 2k$ and $L_0 = 1$. Therefore, we have proved that $$\#\{k : \eps < \lambda_k < 1-\eps\} \le 2k = 2\ceil{\dfrac{1}{\pi^2}\log(4N)\log\left(\dfrac{4}{\eps(1-\eps)}\right)},$$ which is exactly the content of Theorem~\ref{thm:BoundWithoutW}.

\subsection{Proof of Theorem~\ref{thm:BoundWithW}}
\label{sec:ProofThm2}
First, note that if $W \in [\tfrac{1}{4},\tfrac{1}{2})$, the bound in Theorem~\ref{thm:BoundWithW} is greater than the bound in Theorem~\ref{thm:BoundWithoutW}, which has already been established. So we will henceforth assume that $W \in (0,\tfrac{1}{4})$. 

Now, set $L_1 = \left\lfloor\tfrac{1}{4W}\right\rfloor$ (clearly, $L_1 \ge 1$). For each integer $L \ge L_1+1$, we partition the index set $$\setI_L = \{-L,-L+1\ldots,-2,-1\}\cup\{N,N+1,\ldots,N+L-2,N+L-1\}$$ into three sets 
\begin{align*}
\setI^{(0)}_L &= \{-L,-L+1\ldots,-L_1-2,-L_1-1\}\cup\{N+L_1,N+L_1+1,\ldots,N+L-2,N+L-1\}
\\
\setI^{(1)}_L &= \{-L_1,-L_1+1,\ldots,-2,-1\}
\\
\setI^{(2)}_L &= \{N,N+1,\ldots,N+L_1-2,N+L_1-1\}
\end{align*}
and then accordingly partition $\mX_L$ into three submatrices $\mX^{(0)}_L \in \R^{2(L-L_1) \times N}$, $\mX^{(1)}_L \in \R^{L_1 \times N}$, and $\mX^{(2)}_L \in \R^{L_1 \times N}$ defined by $$\mX^{(i)}_L[\ell,n] = \mX_L[\ell,n] \quad \text{for} \quad \ell \in \setI^{(i)}_L \ \text{and} \ n \in \{0,\ldots,N-1\}.$$ Once again, we index the rows of each $\mX^{(i)}_L$ by $\ell \in \setI^{(i)}_L$ for convenience. We proceed to bound the singular values of $\mX^{(0)}_L, \mX^{(1)}_L, \mX^{(2)}_L$, and then use these bounds to bound the singular values of $\mX_L$.

\subsubsection*{Singular values of $\mX^{(0)}_L$}
The submatrix $\mX^{(0)}_L$, has the same low-rank displacement structure as $\mX_L$. Specifically, we can write $$\mC^{(0)}_L\mX^{(0)}_L-\mX^{(0)}_L\mD = \mU^{(0)}_L\mV^*$$ where $\mC^{(0)}_L \in \R^{2(L-L_1) \times 2(L-L_1)}$ is the diagonal submatrix of $\mC_L$ defined by $\mC^{(0)}_L[\ell,\ell] = \ell$ for $\ell \in \setI^{(0)}_L$, $\mU^{(0)}_L \in \R^{2(L-L_1) \times 2}$ is the submatrix of $\mU_L$ defined in by $\mU^{(0)}_L[\ell,q] = \mU_L[\ell,q]$ for $\ell \in \setI^{(0)}_L$ and $q \in \{0,1\}$, and $\mD$ and $\mV$ are the same as defined in Section~\ref{sec:ProofThm1}. 

Since $\Spec(\mC^{(0)}_L) = \setI^{(0)}_L \subset (-\infty,-L_1-1] \cup [N+L_1,\infty)$ and $\Spec(\mD) = \{0,\ldots,N-1\} \subset [0,N-1],$ we can once again apply Theorem~\ref{thm:UnboundedSpec}, but with the parameters $c_1 = -L_1-1$, $d_1 = 0$, $d_2 = N-1$, $c_2 = N+L_1$, and $\nu = 2$. Then, the theorem tells us that for any integer $k_0 \ge 0$, $$\sigma_{2k_0+1}(\mX^{(0)}) \le 4\|\mX^{(0)}_L\|\exp\left[-\dfrac{\pi^2 k_0}{\log(16\gamma)}\right] \quad \text{where} \quad \gamma = \dfrac{(c_2-d_1)(d_2-c_1)}{(c_2-d_2)(d_1-c_1)} = \left(\dfrac{N+L_1}{L_1+1}\right)^2.$$ 

Since $\mX^{(0)}_L$ is a submatrix of $\mX_L$, we have $\|\mX^{(0)}_L\| \le \|\mX_L\| \le \tfrac{1}{2}$. Also, since $L_1 = \floor{\tfrac{1}{4W}} \ge \tfrac{1}{4W}-1 > 0$ and $\tfrac{N+x}{x+1}$ is a non-increasing function of $x > 0$, we can bound $$\gamma = \left(\dfrac{N+L_1}{L_1+1}\right)^2 \le \left(\dfrac{N+(\tfrac{1}{4W}-1)}{(\tfrac{1}{4W}-1)+1}\right)^2 = (4NW+1-4W)^2 \le (4NW+1)^2.$$ Substituting $\gamma \le (4NW+1)^2$ and $\|\mX^{(0)}_L\| \le \tfrac{1}{2}$ into the above bound  yields $$\sigma_{2k_0+1}(\mX^{(0)}_L) \le 2\exp\left[-\dfrac{\pi^2 k_0}{\log[16(4NW+1)^2]}\right] = 2\exp\left[-\dfrac{\pi^2 k_0}{2\log(16NW+4)}\right]$$ for all integers $k_0 \ge 0$.

\subsubsection*{Singular values of $\mX^{(1)}_L$} 
To bound the singular values of $\mX^{(1)}_L$, we exploit the fact that its entries $\mX^{(1)}_L[\ell,n] = \dfrac{\sin[2\pi W(\ell-n)]}{\pi(\ell-n)}$ are a smooth function of $\ell$ and $n$ to construct a tunable low-rank approximation of $\mX^{(1)}_L$.

Define the sinc function $g(t) = \dfrac{\sin(2\pi Wt)}{\pi t}$. For each $n = 0,\ldots,N-1$, define $$g_n(t) = g(t-n) = \dfrac{\sin[2\pi W(t-n)]}{\pi(t-n)},$$ and let $$P_{k,n}(t) = \sum_{m = 0}^{k-1}p_{m,n}t^m$$ be the degree $k-1$ Chebyshev interpolating polynomial for $g_n(t)$ on the interval $[-L_1,-1]$. 

We now define the low rank approximation $\widetilde{\mX}^{(1)}_L \in \R^{L_1 \times N}$ by $$\widetilde{\mX}^{(1)}_L[\ell,n] = P_{k,n}(\ell) = \sum_{m = 0}^{k-1}p_{m,n}\ell^m \quad \text{for} \quad \ell \in \{-L_1,\ldots,-1\} \ \text{and} \ n \in \{0,\ldots,N-1\}.$$ We can factor $\widetilde{\mX}^{(1)}_L = \mW\mP$ where $\mW \in \R^{L_1 \times k}$ and $\mP \in \R^{k \times N}$ are defined by $\mW[\ell,m] = \ell^m$ and $\mP[m,n] = p_{m,n}$. Hence, $\rank(\widetilde{\mX}^{(1)}_L) \le k$.

By Theorem~\ref{thm:ChebyshevInterp} (in Appendix~\ref{sec:SincFunction}), the Chebyshev interpolating polynomial satisfies $$\left|g_n(t) - P_{k,n}(t)\right| \le \dfrac{(L_1-1)^k}{2^{2k-1}k!}\max_{\xi \in [-L_1,-1]}\left|g_n^{(k)}(\xi)\right| \quad \text{for all} \quad t \in [-L_1,-1].$$ Also, by Lemma~\ref{lem:SincDerivatives} (in Appendix~\ref{sec:SincFunction}), the derivatives of the unshifted sinc-function $g(t)$ can be bounded by $$\left|g^{(k)}(t)\right| \le (2\pi W)^k\min\left\{\dfrac{2W}{k+1} , \dfrac{2}{\pi|t|} \right\} \quad \text{for all} \quad t \in \R.$$
Hence, for any $\ell \in \setI^{(1)}_L$ and $n \in \{0,\ldots,N-1\}$, we have
\begin{align*}
\left|\mX^{(1)}_L[\ell,n] - \widetilde{\mX}^{(1)}_L[\ell,n]\right| &= \left|g_n(\ell) - P_{k,n}(\ell)\right|
\\
&\le \dfrac{(L_1-1)^k}{2^{2k-1}k!}\max_{\xi \in [-L_1,-1]}\left|g_n^{(k)}(\xi)\right|
\\
&= \dfrac{(L_1-1)^k}{2^{2k-1}k!}\max_{\xi \in [-L_1,-1]}\left|g^{(k)}(\xi-n)\right|
\\
&= \dfrac{(L_1-1)^k}{2^{2k-1}k!}\max_{t \in [-L_1-n,-n-1]}\left|g^{(k)}(t)\right|
\\
&\le \dfrac{(L_1-1)^k}{2^{2k-1}k!}\max_{t \in [-L_1-n,-n-1]}(2\pi W)^k\min\left\{\dfrac{2W}{k+1} , \dfrac{2}{\pi|t|} \right\}
\\
&= \dfrac{(L_1-1)^k}{2^{2k-1}k!}(2\pi W)^k\min\left\{\dfrac{2W}{k+1} , \dfrac{2}{\pi(n+1)} \right\}
\\
&= \dfrac{4(\tfrac{\pi}{2}W(L_1-1))^k}{k!}\min\left\{\dfrac{W}{k+1} , \dfrac{1}{\pi(n+1)} \right\}.
\end{align*}

We proceed to bound the Frobenius norm of $\mX^{(1)}_L - \widetilde{\mX}^{(1)}_L$. Set $N_1 = \floor{\tfrac{k+1}{\pi W}}$. Then, 
\begin{align*}
\left\|\mX^{(1)}_L - \widetilde{\mX}^{(1)}_L\right\|_F^2 &= \sum_{n = 0}^{N-1}\sum_{\ell = -L_1}^{-1}\left|\mX^{(1)}_L[\ell,n] - \widetilde{\mX}^{(1)}_L[\ell,n]\right|^2
\\
&\le \sum_{n = 0}^{N-1}\sum_{\ell = -L_1}^{-1}\dfrac{16(\tfrac{\pi}{2}W(L_1-1))^{2k}}{(k!)^2}\min\left\{\dfrac{W^2}{(k+1)^2} , \dfrac{1}{\pi^2(n+1)^2} \right\}
\\
&= \sum_{n = 0}^{N-1}\dfrac{16L_1(\tfrac{\pi}{2}W(L_1-1))^{2k}}{(k!)^2}\min\left\{\dfrac{W^2}{(k+1)^2} , \dfrac{1}{\pi^2(n+1)^2} \right\}
\\
&\le \sum_{n = 0}^{\infty}\dfrac{16L_1(\tfrac{\pi}{2}W(L_1-1))^{2k}}{(k!)^2}\min\left\{\dfrac{W^2}{(k+1)^2} , \dfrac{1}{\pi^2(n+1)^2} \right\}
\\
&\le \dfrac{16L_1(\tfrac{\pi}{2}W(L_1-1))^{2k}}{(k!)^2}\left[\sum_{n = 0}^{N_1-1}\dfrac{W^2}{(k+1)^2} + \sum_{n = N_1}^{\infty}\dfrac{1}{\pi^2(n+1)^2}\right]
\\
&\le \dfrac{16L_1(\tfrac{\pi}{2}W(L_1-1))^{2k}}{(k!)^2}\left[\dfrac{W^2N_1}{(k+1)^2} + \dfrac{1}{\pi^2N_1}\right],
\end{align*}
where the last line follows from the bound $\sum_{n = N_1}^{\infty}\tfrac{1}{(n+1)^2} \le \tfrac{1}{N_1}$.

We proceed to weaken this result to obtain a more usable upper bound as follows:
\begin{align*}
\left\|\mX^{(1)}_L - \widetilde{\mX}^{(1)}_L\right\|_F^2 &\le \dfrac{16L_1(\tfrac{\pi}{2}W(L_1-1))^{2k}}{(k!)^2}\left[\dfrac{W^2N_1}{(k+1)^2} + \dfrac{1}{\pi^2N_1}\right]
\\
&\le \dfrac{16(L_1-1)(\tfrac{\pi}{2}WL_1)^{2k}}{(k!)^2}\left[\dfrac{W^2N_1}{(k+1)^2} + \dfrac{1}{\pi^2N_1}\right]
\\
&= \dfrac{16(\tfrac{\pi}{2}WL_1)^{2k}}{(k!)^2}\left[\dfrac{W^2(L_1-1)N_1}{(k+1)^2} + \dfrac{L_1-1}{\pi^2N_1}\right]
\\
&\le \dfrac{16(\tfrac{\pi}{2}WL_1)^{2k}}{(k!)^2}\left[\dfrac{W^2N_1L_1}{(k+1)^2} + \dfrac{L_1}{\pi^2(N_1+1)}\right]
\\
&\le \dfrac{16(\tfrac{\pi}{2}W \cdot \tfrac{1}{4W})^{2k}}{(k!)^2}\left[\dfrac{W^2 \cdot \tfrac{k+1}{\pi W} \cdot \tfrac{1}{4W}}{(k+1)^2} + \dfrac{\tfrac{1}{4W}}{\pi^2 \cdot \tfrac{k+1}{\pi W}}\right]
\\
&= \dfrac{8}{\pi(k+1)(k!)^2}\left(\dfrac{\pi}{8}\right)^{2k}
\\
&\le \dfrac{5600}{\pi}\left(\dfrac{\pi}{48}\right)^{2k}.
\end{align*}
The 2nd line follows from the inequalities $(L_1-1)^{2k} \le L_1^{2k}$ and $L_1(L_1-1)^{2k} \le (L_1-1)L_1^{2k}$. The 4th line holds since $\tfrac{L_1 - 1}{N_1} \le \tfrac{L_1}{N_1+1}$ is equivalent to $L_1 \le N_1 + 1$, which is true since $L_1 \le \left\lfloor\tfrac{1}{4W}\right\rfloor \le \tfrac{1}{4W} < \tfrac{k+1}{\pi W} \le \left\lfloor\tfrac{k+1}{\pi W}\right\rfloor+1 = N_1+1$. The last line holds due to the fact that $(k+1)(k!)^2 \ge \tfrac{1}{700}6^{2k}$ for all integers $k \ge 0$.

Since $\rank(\widetilde{\mX}^{(1)}_L) \le k$, we have $\sigma_{k+1}(\widetilde{\mX}^{(1)}_L) = 0$. Hence, we can bound $$\sigma_{k+1}(\mX^{(1)}_L) = \left|\sigma_{k+1}(\mX^{(1)}_L) - \sigma_{k+1}(\widetilde{\mX}^{(1)}_L)\right| \le \left\|\mX^{(1)}_L - \widetilde{\mX}^{(1)}_L\right\| \le \left\|\mX^{(1)}_L - \widetilde{\mX}^{(1)}_L\right\|_F \le \sqrt{\dfrac{5600}{\pi}}\left(\dfrac{\pi}{48}\right)^{k}.$$

\subsubsection*{Singular values of $\mX^{(2)}_L$}
We can exploit the symmetry between $\mX^{(2)}_L$ and $\mX^{(1)}_L$ to show that the singular values of $\widetilde{\mX}^{(2)}_L$ are the same as those of $\widetilde{\mX}^{(1)}_L$. Specifically, for any indices $\ell \in \setI^{(2)}_L$ and $n \in \{0,\ldots,N-1\}$, we have that $N-1-\ell \in \setI^{(1)}_L$ and $N-1-n \in \{0,\dots,N-1\}$, and that $$\mX^{(2)}_L[\ell,n] = \dfrac{\sin[2\pi W(\ell-n)]}{\pi(\ell-n)} = \dfrac{\sin[2\pi W((N-1-\ell)-(N-1-n))]}{\pi((N-1-\ell)-(N-1-n))} = \mX^{(1)}_L[N-1-\ell,N-1-n].$$ Since the singular values of a matrix are invariant under permutations of rows/columns, we have $$\sigma_{k+1}(\mX^{(2)}_L) = \sigma_{k+1}(\mX^{(1)}_L) \le \sqrt{\dfrac{5600}{\pi}}\left(\dfrac{\pi}{48}\right)^{k}$$ for all integers $k \ge 0$.

\subsubsection*{Singular values of $\mX_L$}
Due to the way we partitioned $\mX_L$ into three submatrices, we have $$\mX_L^*\mX_L = \mX^{(0)*}_L\mX^{(0)}_L + \mX^{(1)*}_L\mX^{(1)}_L + \mX^{(2)*}_L\mX^{(2)}_L.$$ By the Weyl eigenvalue inequalities, we have $$\mu_{2k_0+2k+1}(\mX_L^*\mX_L) \le \mu_{2k_0+1}(\mX^{(0)*}_L\mX^{(0)}_L) + \mu_{k+1}(\mX^{(1)*}_L\mX^{(1)}_L) + \mu_{k+1}(\mX^{(2)*}_L\mX^{(2)}_L).$$ Hence, we can bound 
\begin{align*}
\sigma_{2k_0+2k+1}(\mX_L)^2 &= \mu_{2k_0+2k+1}(\mX_L^*\mX_L)
\\
&\le \mu_{2k_0+1}(\mX^{(0)*}_L\mX^{(0)}_L) + \mu_{k+1}(\mX^{(1)*}_L\mX^{(1)}_L) + \mu_{k+1}(\mX^{(2)*}_L\mX^{(2)}_L)
\\
&= \sigma_{2k_0+1}(\mX^{(0)}_L)^2 + \sigma_{k+1}(\mX^{(1)}_L)^2 + \sigma_{k+1}(\mX^{(2)}_L)^2
\\
&\le 4\exp\left[-\dfrac{\pi^2k_0}{\log(16NW+4)}\right] + \dfrac{5600}{\pi}\left(\dfrac{\pi}{48}\right)^{2k} + \dfrac{5600}{\pi}\left(\dfrac{\pi}{48}\right)^{2k}
\\
&= 4\exp\left[-\dfrac{\pi^2k_0}{\log(16NW+4)}\right] + \dfrac{11200}{\pi}\left(\dfrac{\pi}{48}\right)^{2k}
\end{align*}
for any integers $k_0 \ge 0$ and $k \ge 1$. 

If we set $$k_0 = \ceil{\dfrac{1}{\pi^2}\log(16NW+4)\log\left(\dfrac{5}{\eps(1-\eps)}\right)}$$ and $$k = \ceil{\dfrac{1}{2\log(\tfrac{48}{\pi})}\log\left(\dfrac{\tfrac{56000}{\pi}}{\eps(1-\eps)}\right)}$$ then we obtain 
\begin{align*}
\sigma_{2k_0+2k+1}(\mX_L)^2 &\le 4\exp\left[-\dfrac{\pi^2k_0}{\log(16NW+4)}\right] + \dfrac{11200}{\pi}\left(\dfrac{\pi}{48}\right)^{2k}
\\
&\le \dfrac{4\eps(1-\eps)}{5} + \dfrac{\eps(1-\eps)}{5}
\\
&= \eps(1-\eps),
\end{align*}
i.e., $\sigma_{2k_0+2k+1}(\mX_L) \le \sqrt{\eps(1-\eps)}$. Our steps hold for all $L \ge L_1+1$. 

This proves the second property of Lemma~\ref{lem:Main} for $r = 2k_0+2k$ and $L_0 = L_1+1$. Therefore, $$\#\{k : \eps < \lambda_k < 1-\eps\} \le 2k_0+2k = 2\ceil{\dfrac{1}{\pi^2}\log(16NW+4)\log\left(\dfrac{5}{\eps(1-\eps)}\right)} + 2\ceil{\dfrac{1}{2\log(\tfrac{48}{\pi})}\log\left(\dfrac{\tfrac{56000}{\pi}}{\eps(1-\eps)}\right)}.$$

We can loosen this bound to make it more ``user friendly'' as follows:
\begin{align*}
\#\{k : \eps < \lambda_k < 1-\eps\} &\le 2\ceil{\dfrac{1}{\pi^2}\log(16NW+4)\log\left(\dfrac{5}{\eps(1-\eps)}\right)} + 2\ceil{\dfrac{1}{2\log(\tfrac{48}{\pi})}\log\left(\dfrac{\tfrac{56000}{\pi}}{\eps(1-\eps)}\right)}
\\
&\le \dfrac{2}{\pi^2}\log(16NW+4)\log\left(\dfrac{5}{\eps(1-\eps)}\right) + \dfrac{1}{\log(\tfrac{48}{\pi})}\log\left(\dfrac{\tfrac{56000}{\pi}}{\eps(1-\eps)}\right) + 4
\\
&= \dfrac{2}{\pi^2}\log(16NW+4)\log\left(\dfrac{5}{\eps(1-\eps)}\right) + \dfrac{1}{\log(\tfrac{48}{\pi})}\log\left(\dfrac{5}{\eps(1-\eps)}\right) + \dfrac{\log\left(\tfrac{11200}{\pi}\right)}{\log(\tfrac{48}{\pi})} + 4
\\
&= \left(\dfrac{2}{\pi^2}\log(16NW+4) + \dfrac{1}{\log(\tfrac{48}{\pi})}\right)\log\left(\dfrac{5}{\eps(1-\eps)}\right) + \dfrac{\log\left(\tfrac{11200}{\pi}\right)}{\log(\tfrac{48}{\pi})} + 4
\\
&= \dfrac{2}{\pi^2}\log\left(\exp\left(\tfrac{\pi^2}{2\log(\tfrac{48}{\pi})}\right)(16NW+4)\right)\log\left(\dfrac{5}{\eps(1-\eps)}\right) + \dfrac{\log\left(\tfrac{11200}{\pi}\right)}{\log(\tfrac{48}{\pi})} + 4
\\
&\le \dfrac{2}{\pi^2}\log(100NW+25)\log\left(\dfrac{5}{\eps(1-\eps)}\right) + 7,
\end{align*}
which establishes \cref{thm:BoundWithW}.

\section{Proof of DPSS Eigenvalue Bounds (Corollaries~\ref{cor:DPSSEigBounds} and \ref{cor:DPSSEigSumBounds})}
\label{sec:DPSSEigenvalueBounds}
First, we state a result from \cite{ZhuWakin17} which bounds $\lambda_k$ for two values of $k$ near $2NW$.

\begin{lemma}
\label{lem:MiddleEigs}
For any $N \in \N$ and $W \in (0,\tfrac{1}{2})$, $$\lambda_{\floor{2NW}-1} \ge \dfrac{1}{2} \ge \lambda_{\ceil{2NW}}.$$
\end{lemma}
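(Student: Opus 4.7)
The plan is to use the $W\leftrightarrow\tfrac{1}{2}-W$ duality to reduce both inequalities to a single lower bound, and then establish that lower bound by a Courant--Fischer argument with an explicit witness subspace.

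The paper has already noted the symmetry $\lambda_k(N,W)=1-\lambda_{N-1-k}(N,\tfrac{1}{2}-W)$. Combined with the arithmetic identity $\floor{2N(\tfrac{1}{2}-W)}=N-\ceil{2NW}$, this turns the second inequality $\lambda_{\ceil{2NW}}(N,W)\le\tfrac{1}{2}$ into $\lambda_{\floor{2N\tilde W}-1}(N,\tilde W)\ge\tfrac{1}{2}$ with $\tilde W=\tfrac{1}{2}-W$. So it suffices to prove the single lower bound $\lambda_{\floor{2NW}-1}\ge\tfrac{1}{2}$ for every admissible $(N,W)$. Courant--Fischer combined with the Parseval identities recalled in Section~\ref{sec:DPSS} gives
\begin{align*}
\lambda_{\floor{2NW}-1}\;=\;\max_{\substack{V\subset\C^N\\ \dim V=\floor{2NW}}}\;\min_{x\in V\setminus\{0\}}\;\frac{\int_{-W}^{W}|\widehat x(f)|^2\,df}{\int_{-1/2}^{1/2}|\widehat x(f)|^2\,df},
\end{align*}
so the task reduces to producing an $M=\floor{2NW}$-dimensional subspace on which at least half the DTFT energy of each element lies in $[-W,W]$.

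The natural candidate is $V=\mathrm{span}\{v_j\}_{j=0}^{M-1}$ with $v_j[n]=e^{\j 2\pi f_j n}$ for $n\in\{0,\ldots,N-1\}$, where the frequencies $f_0<\cdots<f_{M-1}$ form a Nyquist-like grid of $M$ points strictly inside $(-W,W)$; distinct frequencies ensure $\dim V=M$ via a Vandermonde argument. Writing $x=\sum_j c_j v_j$, the Rayleigh quotient becomes $\vc c^*\mG_W\vc c\big/\vc c^*\mG_{1/2}\vc c$, where the Gram matrices $\mG_I$ collect the integrals $(\mG_I)_{ij}=\int_I \overline{\widehat v_i(f)}\,\widehat v_j(f)\,df$ of shifted Dirichlet kernels. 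The desired Rayleigh bound is therefore equivalent to the positive semidefinite comparison $\mG_W\succeq\tfrac{1}{2}\mG_{1/2}$.

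The main obstacle is establishing this PSD ordering, which is a uniform-in-$\vc c$ statement about the Dirichlet-sidelobe leakage outside $[-W,W]$. I would tackle it in three steps: (i) choose the grid margin of the $f_j$'s so that each individual atom has the majority of its DTFT energy inside $[-W,W]$, (ii) prove an off-diagonal decay estimate on the entries of $\mG_{1/2}-2\mG_W$ by exploiting the sinc decay of $\widehat v_i\overline{\widehat v_j}$ at nontrivial frequency separations, and (iii) close the argument with a Gershgorin/diagonal-dominance bound applied to $2\mG_W-\mG_{1/2}$. The delicate case is the ``edge'' atoms whose energy is the hardest to trap inside $[-W,W]$; calibrating the grid margin in step (i) is what ultimately determines whether $M=\floor{2NW}$ is achievable.
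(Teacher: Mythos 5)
The paper does not prove Lemma~\ref{lem:MiddleEigs}; it cites it from \cite{ZhuWakin17} without proof, so there is no in-paper argument to compare against. Your reduction by the $W\leftrightarrow\tfrac12-W$ duality is correct --- the identity $\lambda_k(N,W)=1-\lambda_{N-1-k}(N,\tfrac12-W)$ from Section~3.3 together with the arithmetic fact $\floor{2N(\tfrac12-W)}-1=N-1-\ceil{2NW}$ does collapse the two inequalities to the single lower bound, and the Courant--Fischer reformulation via DTFT energy ratios is valid. These are reasonable first moves.

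The gap is that the heart of the argument, the ordering $\mG_W\succeq\tfrac12\mG_{1/2}$, is never actually established, and the three-step plan for it has concrete obstacles. With the atoms at spacing $1/N$ the full-band Gram matrix is $\mG_{1/2}=N\mId$, so the claim is that the smallest eigenvalue of $\tfrac1N\mG_W$ is at least $\tfrac12$; a necessary condition is that every diagonal entry of $\tfrac1N\mG_W$ --- the in-band energy fraction of each individual atom --- be at least $\tfrac12$, since for a Hermitian matrix each diagonal entry upper-bounds the smallest eigenvalue. At integer spacing $k/N$ there are only $2NW-1<\floor{2NW}$ strict interior grid points when $NW\in\N$, so a boundary frequency $f_j=\pm W$ must be included, and for $W<\tfrac14$ the Dirichlet kernel centered there has strictly less than half its period-$1$ energy in $[-W,W]$ (the kernel is symmetric about $f_j$ and $[-W,W]$ occupies less than half the period on the inward side); that single sub-$\tfrac12$ diagonal already disqualifies the subspace. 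Shifting to a half-integer grid supplies $\floor{2NW}$ interior points and keeps the diagonals above $\tfrac12$, but then step~(iii) is where all the work lies and nothing in the proposal verifies it: the Gershgorin disks of $2\mG_W-\mG_{1/2}$ are centered at $2(\mG_W)_{jj}-N$ with radius $\sum_{k\ne j}2|(\mG_W)_{jk}|$, and no estimate is given that keeps those disks in the closed right half-plane, especially for the near-edge rows where the diagonal center is smallest. The lemma asserts the sharp index $\floor{2NW}$, not $\floor{2NW}-O(\log N)$, so it is exactly this edge behavior that the proof must handle; as written, the proposal defers it to an unquantified ``grid margin'' calibration in step~(i), which is a plan rather than a proof.
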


To derive bounds on $\lambda_k$, we will set $\eps$ such that the transition region is too narrow to contain $k$, and thus conclude either $\lambda_k \ge 1-\eps$ (if $k \le \floor{2NW}-1$) or $\lambda_k \le \eps$ (if $k \ge \ceil{2NW}$). To derive bounds on $\sum_{k = 0}^{K-1}(1-\lambda_k)$ and $\sum_{k = K}^{N-1}\lambda_k$, we will simply apply the bounds on $\lambda_k$ and the formula for the sum of a geometric series.

\subsection{Lower bounds on $\lambda_k$ for $k \le \floor{2NW}-1$}
For any integer $k$ such that $0 \le k \le \floor{2NW}-1$, set $$\eps = 8\exp\left[-\dfrac{\floor{2NW}-k-2}{\tfrac{2}{\pi^2}\log(4N)}\right],$$ and suppose for sake of contradiction that $\lambda_k < 1-\eps$. 

By using the assumption $k \le \floor{2NW}-1$ and Lemma~\ref{lem:MiddleEigs}, we have $\tfrac{1}{2} \le \lambda_{\floor{2NW}-1} \le \lambda_k < 1-\eps$, i.e., $\eps < \tfrac{1}{2}$. 
Therefore, $\eps < \tfrac{1}{2} \le \lambda_{\floor{2NW}-1} \le \lambda_k < 1-\eps$, i.e. both $k$ and $\floor{2NW}-1$ are in the transition region $\{k' : \eps < \lambda_{k'} < 1-\eps\}$, and thus, so are all the indices $k'$ between $k$ and $\floor{2NW}-1$. Hence, $$\#\{k' : \eps < \lambda_{k'} < 1-\eps\} \ge \#\{k' : k \le k' \le \floor{2NW}-1\} = \floor{2NW}-k.$$ 
However, since $\eps < \tfrac{1}{2}$, by Theorem~\ref{thm:BoundWithoutW} we have $$\#\{k' : \eps < \lambda_{k'} < 1-\eps\} \le 2\ceil{\dfrac{1}{\pi^2}\log(4N)\log\left(\dfrac{4}{\eps(1-\eps)}\right)} < \dfrac{2}{\pi^2}\log(4N)\log\left(\dfrac{8}{\eps}\right)+2 = \floor{2NW}-k.$$ This is a contradiction. Therefore, $$\lambda_k \ge 1-\eps = 1-8\exp\left[-\dfrac{\floor{2NW}-k-2}{\tfrac{2}{\pi^2}\log(4N)}\right] \quad \text{for} \quad 0 \le k \le \floor{2NW}-1.$$

In a similar manner, we can assume $\lambda_k < 1-\eps$, where $$\eps = 10\exp\left[-\dfrac{\floor{2NW}-k-7}{\tfrac{2}{\pi^2}\log(100NW+25)}\right],$$ and then invoke \cref{thm:BoundWithW} to obtain a contradiction. Therefore, $$\lambda_k \ge 1-\eps = 1-10\exp\left[-\dfrac{\floor{2NW}-k-7}{\tfrac{2}{\pi^2}\log(100NW+25)}\right] \quad \text{for} \quad 0 \le k \le \floor{2NW}-1.$$ Combining these two bounds establishes the first part of Corollary~\ref{cor:DPSSEigBounds}.

\subsection{Upper bounds on $\lambda_k$ for $k \ge \ceil{2NW}$}
For any integer $k$ such that $\ceil{2NW} \le k \le N-1$, set $$\eps = 8\exp\left[-\dfrac{k-\ceil{2NW}-1}{\tfrac{2}{\pi^2}\log(4N)}\right],$$ and suppose for sake of contradiction that $\lambda_k > \eps$. 

By using the assumption $k \ge \ceil{2NW}$ and Lemma~\ref{lem:MiddleEigs}, we have $\eps < \lambda_k \le \lambda_{\ceil{2NW}} \le \tfrac{1}{2}$, i.e., $\eps < \tfrac{1}{2}$. Therefore, $\eps < \lambda_k \le \lambda_{\ceil{2NW}} \le \tfrac{1}{2} < 1-\eps$, i.e., both $k$ and $\ceil{2NW}$ are in the transition region $\{k' : \eps < \lambda_{k'} < 1-\eps\}$, and thus, so are all the indices $k'$ between $k$ and $\ceil{2NW}$. Hence, $$\#\{k' : \eps < \lambda_{k'} < 1-\eps\} \ge \#\{k' : \ceil{2NW} \le k' \le k\} = k-\ceil{2NW}+1.$$ 
However, since $\eps < \tfrac{1}{2}$, by Theorem~\ref{thm:BoundWithoutW} we have $$\#\{k' : \eps < \lambda_{k'} < 1-\eps\} \le 2\ceil{\dfrac{1}{\pi^2}\log(4N)\log\left(\dfrac{4}{\eps(1-\eps)}\right)} < \dfrac{2}{\pi^2}\log(4N)\log\left(\dfrac{8}{\eps}\right)+2 = k-\ceil{2NW}+1.$$ This is a contradiction. Therefore, $$\lambda_k \le \eps = 8\exp\left[-\dfrac{k-\ceil{2NW}-1}{\tfrac{2}{\pi^2}\log(4N)}\right] \quad \text{for} \quad \ceil{2NW} \le k \le N-1.$$

In a similar manner, we can assume $\lambda_k > \eps$, where $$\eps = 10\exp\left[-\dfrac{k-\ceil{2NW}-6}{\tfrac{2}{\pi^2}\log(100NW+25)}\right],$$ and then invoke Theorem~\ref{thm:BoundWithW} to obtain a contradiction. Therefore, $$\lambda_k \le \eps = 10\exp\left[-\dfrac{k-\ceil{2NW}-6}{\tfrac{2}{\pi^2}\log(100NW+25)}\right] \quad \text{for} \quad \ceil{2NW} \le k \le N-1.$$
Combining these two bounds establishes the second part of Corollary~\ref{cor:DPSSEigBounds}.

\subsection{Bounds on $\sum_{k = 0}^{K-1}(1-\lambda_k)$ for $K \le \floor{2NW}$}
For any integer $K$ such that $1 \le K \le \floor{2NW}$, we can apply the first part of the lower bound for $\lambda_k$ in Corollary~\ref{cor:DPSSEigBounds} along with the inequality $\tfrac{e^{-x}}{1-e^{-x}} \le \tfrac{1}{x}$ for $x > 0$ to obtain
\begin{align*}
\sum_{k = 0}^{K-1}(1-\lambda_k) &\le \sum_{k = 0}^{K-1}8\exp\left[-\dfrac{\floor{2NW}-k-2}{\tfrac{2}{\pi^2}\log(4N)}\right]
\\
&\le \sum_{k = -\infty}^{K-1}8\exp\left[-\dfrac{\floor{2NW}-k-2}{\tfrac{2}{\pi^2}\log(4N)}\right]
\\
&= \dfrac{8\exp\left[-\dfrac{\floor{2NW}-K-1}{\tfrac{2}{\pi^2}\log(4N)}\right]}{1-\exp\left[-\dfrac{1}{\tfrac{2}{\pi^2}\log(4N)}\right]}
\\
&\le \dfrac{16}{\pi^2}\log(4N)\exp\left[-\dfrac{\floor{2NW}-K-2}{\tfrac{2}{\pi^2}\log(4N)}\right].
\end{align*}

In a similar manner, we can apply the second part of the lower bound for $\lambda_k$ in Corollary~\ref{cor:DPSSEigBounds} instead of the first part of the lower bound to obtain
\begin{align*}
\sum_{k = 0}^{K-1}(1-\lambda_k) &\le \sum_{k = 0}^{K-1}10\exp\left[-\dfrac{\floor{2NW}-k-7}{\tfrac{2}{\pi^2}\log(100NW+25)}\right]
\\
&\le \sum_{k = -\infty}^{K-1}8\exp\left[-\dfrac{\floor{2NW}-k-7}{\tfrac{2}{\pi^2}\log(100NW+25)}\right]
\\
&= \dfrac{8\exp\left[-\dfrac{\floor{2NW}-K-6}{\tfrac{2}{\pi^2}\log(100NW+25)}\right]}{1-\exp\left[-\dfrac{1}{\tfrac{2}{\pi^2}\log(100NW+25)}\right]}
\\
&\le \dfrac{20}{\pi^2}\log(100NW+25)\exp\left[-\dfrac{\floor{2NW}-K-7}{\tfrac{2}{\pi^2}\log(100NW+25)}\right].
\end{align*}
Combining these two bounds establishes the first part of Corollary~\ref{cor:DPSSEigSumBounds}.

\subsection{Bounds on $\sum_{k = K}^{N-1}\lambda_k$ for $K \ge \ceil{2NW}$}
For any integer $K$ such that $\ceil{2NW} \le K \le N-1$, we can apply the first part of the upper bound for $\lambda_k$ in Corollary~\ref{cor:DPSSEigBounds} along with the inequality $\tfrac{e^{-x}}{1-e^{-x}} \le \tfrac{1}{x}$ for $x > 0$ to obtain
\begin{align*}
\sum_{k = K}^{N-1}\lambda_k &\le \sum_{k = K}^{N-1}8\exp\left[-\dfrac{k-\ceil{2NW}-1}{\tfrac{2}{\pi^2}\log(4N)}\right]
\\
&\le \sum_{k = K}^{\infty}8\exp\left[-\dfrac{k-\ceil{2NW}-1}{\tfrac{2}{\pi^2}\log(4N)}\right]
\\
&= \dfrac{8\exp\left[-\dfrac{K-\ceil{2NW}-1}{\tfrac{2}{\pi^2}\log(4N)}\right]}{1-\exp\left[-\dfrac{1}{\tfrac{2}{\pi^2}\log(4N)}\right]}
\\
&\le \dfrac{16}{\pi^2}\log(4N)\exp\left[-\dfrac{K-\ceil{2NW}-2}{\tfrac{2}{\pi^2}\log(4N)}\right].
\end{align*}

In a similar manner, we can apply the second part of the upper bound for $\lambda_k$ in Corollary~\ref{cor:DPSSEigBounds} instead of the first part of the upper bound to obtain
\begin{align*}
\sum_{k = K}^{N-1}\lambda_k &\le \sum_{k = K}^{N-1}10\exp\left[-\dfrac{k-\ceil{2NW}-6}{\tfrac{2}{\pi^2}\log(100NW+25)}\right]
\\
&\le \sum_{k = K}^{\infty}10\exp\left[-\dfrac{k-\ceil{2NW}-6}{\tfrac{2}{\pi^2}\log(100NW+25)}\right]
\\
&= \dfrac{10\exp\left[-\dfrac{K-\ceil{2NW}-6}{\tfrac{2}{\pi^2}\log(100NW+25)}\right]}{1-\exp\left[-\dfrac{1}{\tfrac{2}{\pi^2}\log(100NW+25)}\right]}
\\
&\le \dfrac{20}{\pi^2}\log(100NW+25)\exp\left[-\dfrac{K-\ceil{2NW}-7}{\tfrac{2}{\pi^2}\log(100NW+25)}\right].
\end{align*}
Combining these two bounds establishes the second part of Corollary~\ref{cor:DPSSEigSumBounds}.

\section{Proof of PSWF Eigenvalue Bounds}
\label{sec:PSWFProofs}

First, we state a result by Boulsane, Bourguiba, and Karoui \cite{Boulsane20} which quantifies how close the PSWF eigenvalues are to DPSS eigenvalues with the same time-bandwidth product. 

\begin{lemma}
\label{lem:DPSS_PSWF}
For any $N \in \N$ and $W \in (0,\tfrac{1}{2})$, $$\left(\sum_{k = 0}^{\infty}\left|\lambda_k(N,W)-\widetilde{\lambda}_k(\pi NW)\right|^2\right)^{1/2} \le \dfrac{4\pi^2W^3}{3\sin(2\pi W)},$$ where we define $\lambda_k(N,W) = 0$ for $k \ge N$ for ease of notation. 
\end{lemma}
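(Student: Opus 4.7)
The plan is to realize both eigenvalue sequences as spectra of compact self-adjoint integral operators on $L_2([0,N])$, and to invoke the Hoffman--Wielandt inequality to dominate their $\ell^2$ distance by the Hilbert--Schmidt distance between the two operators. Using the scale invariance of the PSWF eigenvalues, I would take $T = N$ and $\Omega = 2\pi W$ so that $\Omega T/2 = \pi NW$, and let $\mathcal{K}_c$ be the integral operator on $L_2([0, N])$ with kernel $K(t, t') = \tfrac{\sin[2\pi W(t - t')]}{\pi(t - t')}$; after translation this is exactly $\calTc_T\calBc_\Omega\calTc_T$, so that its non-zero eigenvalues are $\{\widetilde{\lambda}_k(\pi NW)\}_{k \ge 0}$. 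I would then define a second operator $\mathcal{K}_d$ on the same space whose kernel is the piecewise constant function equal to $\mB[m, n]$ on each unit cell $[m, m+1) \times [n, n+1)$. Testing against the ansatz $f(t) = \sum_{n = 0}^{N - 1} v_n \mathbf{1}_{[n, n + 1)}(t)$ shows that $f$ is an eigenfunction of $\mathcal{K}_d$ with eigenvalue $\lambda$ if and only if $\vv$ is an eigenvector of $\mB$ with eigenvalue $\lambda$; any function orthogonal to the span of the $\mathbf{1}_{[n, n+1)}$'s lies in $\ker \mathcal{K}_d$. Hence the non-zero eigenvalues of $\mathcal{K}_d$ are exactly the DPSS eigenvalues $\{\lambda_k(N, W)\}_{k = 0}^{N - 1}$.

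Both operators are compact self-adjoint and of finite Hilbert--Schmidt norm, so the Hoffman--Wielandt inequality for Hilbert--Schmidt operators applies; after padding the DPSS eigenvalues with zeros for $k \ge N$ and sorting both sequences in decreasing order, it yields
$$\sum_{k = 0}^\infty \bigl|\lambda_k(N, W) - \widetilde{\lambda}_k(\pi NW)\bigr|^2 \le \|\mathcal{K}_c - \mathcal{K}_d\|_{HS}^2 = \sum_{m, n = 0}^{N - 1} \int_m^{m + 1}\!\!\int_n^{n + 1} \bigl|K(t, t') - \mB[m, n]\bigr|^2\,dt\,dt'.$$

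The remaining task --- and the main technical obstacle --- is showing that this Hilbert--Schmidt distance is at most $\bigl(\tfrac{4\pi^2 W^3}{3\sin(2\pi W)}\bigr)^2$, which is sharp enough to rule out crude estimates. A natural route uses the Fourier representation $K(t, t') = \int_{-W}^W e^{\j 2\pi f(t - t')}\,df$: substituting $u = t - m,\ v = t' - n \in [0, 1)$, the per-cell integrand becomes $\int_{-W}^W e^{\j 2\pi f(m - n)}\bigl(e^{\j 2\pi f(u - v)} - 1\bigr)\,df$. Expanding the squared modulus and summing in $m, n$, the $(m - n)$-oscillations should combine via a Dirichlet-type identity to produce the $1/\sin(2\pi W)$ factor, while the constant $\tfrac{\pi^2 W^3}{3}$ is accounted for by the leading-order estimate $\bigl|e^{\j 2\pi f(u - v)} - 1\bigr| \le 2\pi|f|\,|u - v|$ combined with $\int_0^1\!\!\int_0^1 (u - v)^2\,du\,dv = \tfrac{1}{6}$ and $\int_{-W}^W f^2\,df = \tfrac{2 W^3}{3}$. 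The delicate bookkeeping step is ensuring that the higher-order Taylor remainder is absorbed into this leading constant rather than inflating it; if this proves awkward, an alternative is to bound $\bigl|K(t, t') - \mB[m, n]\bigr|$ by a line integral of $|\nabla K|$ between the points $(t, t')$ and $(m, n)$ and exploit the explicit sinc derivative bounds of Lemma~\ref{lem:SincDerivatives}.
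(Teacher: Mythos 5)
Your overall plan---realize both eigenvalue sequences as spectra of compact, positive, self-adjoint, Hilbert--Schmidt operators and apply the Hoffman--Wielandt inequality---is the right idea, and it is the mechanism behind the cited result in \cite{Boulsane20} (which the present paper does not re-prove; it only quotes the lemma). However, the specific pair of operators you construct does not work: the piecewise-constant, time-domain discretization $\mathcal{K}_d$ of the continuous kernel on $L_2([0,N])$ has a Hilbert--Schmidt distance from $\mathcal{K}_c$ that grows like $\sqrt{N}$, so it cannot be dominated by the $N$-independent quantity $\tfrac{4\pi^2W^3}{3\sin(2\pi W)}$. Writing $g(t)=\tfrac{\sin(2\pi Wt)}{\pi t}$ and $q = m-n$,
$$\|\mathcal{K}_c-\mathcal{K}_d\|_{HS}^2=\sum_{q=-(N-1)}^{N-1}(N-|q|)\int_0^1\!\!\int_0^1\bigl|g(q+u-v)-g(q)\bigr|^2\,du\,dv\;\approx\;\frac{1}{6}\sum_q(N-|q|)\,|g'(q)|^2,$$
and Parseval gives $\sum_{q\in\Z}|g'(q)|^2=4\pi^2\int_{-W}^Wf^2\,df=\tfrac{8\pi^2W^3}{3}$, so the right-hand side is of order $W^3N$, whereas the target $\bigl(\tfrac{4\pi^2W^3}{3\sin(2\pi W)}\bigr)^2\approx\tfrac{4\pi^2W^4}{9}$ for small $W$. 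Already at $N=10$, $W=0.1$ a direct numerical evaluation gives $\|\mathcal{K}_c-\mathcal{K}_d\|_{HS}\approx 0.15$ versus a target of $\approx 0.022$. The Fej\'er sum $\sum_{m,n}e^{\j2\pi(f-f')(m-n)}=\tfrac{\sin^2(\pi N(f-f'))}{\sin^2(\pi(f-f'))}$ that appears when you expand the square is nonnegative, so there is no oscillatory cancellation to save the argument; it is what injects the fatal $N$-dependence.

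The correct realization of your strategy is in the frequency domain. Diagonalizing $\mathcal{B}_W\mathcal{T}_N\mathcal{B}_W$ via the DTFT and conjugating away a unimodular phase, the DPSS eigenvalues $\lambda_k(N,W)$ are the nonzero eigenvalues of the integral operator on $L_2([-W,W])$ with Dirichlet kernel $\tfrac{\sin(\pi N(f-f'))}{\sin(\pi(f-f'))}$. Taking $T=N$, $\Omega=2\pi W$ (so $c=\pi NW$) and diagonalizing $\mathcal{B}_\Omega\mathcal{T}_T\mathcal{B}_\Omega$ via the Fourier transform, the PSWF eigenvalues $\widetilde{\lambda}_k(\pi NW)$ are the nonzero eigenvalues of the operator on the \emph{same} space $L_2([-W,W])$ with sinc kernel $\tfrac{\sin(\pi N(f-f'))}{\pi(f-f')}$; one checks both traces equal $2NW$. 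Their difference kernel is $\sin(\pi N(f-f'))\bigl[\tfrac{1}{\sin(\pi(f-f'))}-\tfrac{1}{\pi(f-f')}\bigr]$, and its Hilbert--Schmidt norm is uniformly bounded in $N$: bounding $\sin^2(\pi N(f-f'))\le 1$, substituting $s=f-f'$, using that $h(x)=\tfrac{1}{\sin x}-\tfrac{1}{x}$ is increasing and odd on $(-\pi,\pi)$ together with $x-\sin x\le x^3/6$ gives $h(\pi s)\le h(2\pi W)\le\tfrac{2\pi^2W^2}{3\sin(2\pi W)}$, and $\int_{-2W}^{2W}(2W-|s|)\,ds=4W^2$ yields exactly $\|\cdot\|_{HS}\le\tfrac{4\pi^2W^3}{3\sin(2\pi W)}$. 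Hoffman--Wielandt then delivers the stated inequality.
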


In particular, we note that this implies $$\left|\lambda_k\left(N,\dfrac{c}{\pi N}\right)-\widetilde{\lambda}_k(c)\right| \le \left(\sum_{k = 0}^{\infty}\left|\lambda_k\left(N,\dfrac{c}{\pi N}\right)-\widetilde{\lambda}_k(c)\right|^2\right)^{1/2} \le \delta_{c,N} := \dfrac{4c^3}{3\pi N^3\sin(\tfrac{2c}{N})}$$ for all $c > 0$ and all integers $N > \tfrac{2c}{\pi}$ and $k \ge 0$. Also, for any $c > 0$, we have $\delta_{c,N} \searrow 0$ as $N \to \infty$, and thus, $$\displaystyle\lim_{N \to \infty}\lambda_k(N,\dfrac{c}{\pi N}) = \widetilde{\lambda}_k(c).$$ 

\subsection{Proof of Theorem~\ref{thm:PSWFEigGap}}
Since $\left|\lambda_k\left(N,\tfrac{c}{\pi N}\right)-\widetilde{\lambda}_k(c)\right| \le \delta_{c,N}$, we have $$\eps < \widetilde{\lambda}_k(c) < 1-\eps \implies \eps - \delta_{c,N} < \lambda_k\left(N,\dfrac{c}{\pi N}\right) < 1-\eps+\delta_{c,N}$$ for all integers $k \ge 0$. For sufficiently large $N$, $\delta_{c,N} < \eps$ and so, we may apply Theorem~\ref{thm:BoundWithW} for $W = \tfrac{c}{\pi N}$ to obtain 
\begin{align*}
\#\left\{k : \eps < \widetilde{\lambda}_k(c) < 1-\eps\right\} &\le \#\left\{k : \eps-\delta_{c,N} < \lambda_k\left(N,\dfrac{c}{\pi N}\right) < 1-\eps+\delta_{c,N}\right\} 
\\
&\le \dfrac{2}{\pi^2}\log\left(\dfrac{100c}{\pi}+25\right)\log\left(\dfrac{5}{(\eps-\delta_{c,N})(1-\eps+\delta_{c,N})}\right)+7.
\end{align*}
Since this bound holds for all sufficiently large $N$, we may take the limit as $N \to \infty$ to obtain 
\begin{align*}
\#\left\{k : \eps < \widetilde{\lambda}_k(c) < 1-\eps\right\} &\le \lim_{N \to \infty}\left[\dfrac{2}{\pi^2}\log\left(\dfrac{100c}{\pi}+25\right)\log\left(\dfrac{5}{(\eps-\delta_{c,N})(1-\eps+\delta_{c,N})}\right)+7\right]
\\
&\le \dfrac{2}{\pi^2}\log\left(\dfrac{100c}{\pi}+25\right)\log\left(\dfrac{5}{\eps(1-\eps)}\right)+7.
\end{align*}

\subsection{Proof of PSWF eigenvalue bounds (Corollary~\ref{cor:PSWFEigBounds})}
For any $0 \le k \le \floor{\tfrac{2c}{\pi}}-1$, we can apply Corollary~\ref{cor:DPSSEigBounds} for any $N > \tfrac{2c}{\pi}$ and $W = \tfrac{c}{\pi N}$ to obtain $$\lambda_k\left(N,\dfrac{c}{\pi N}\right) \ge 1-10\exp\left[-\dfrac{\floor{\tfrac{2c}{\pi}}-k-7}{\tfrac{2}{\pi^2}\log\left(\tfrac{100c}{\pi}+25\right)}\right].$$ Since this holds for all $N > \tfrac{2c}{\pi}$, we have $$\widetilde{\lambda}_k(c) = \lim_{N \to \infty}\lambda_k\left(N,\dfrac{c}{\pi N}\right) \ge 1-10\exp\left[-\dfrac{\floor{\tfrac{2c}{\pi}}-k-7}{\tfrac{2}{\pi^2}\log\left(\tfrac{100c}{\pi}+25\right)}\right].$$

Similarly, for any $k \ge \ceil{\tfrac{2c}{\pi}}$, we can apply Corollary~\ref{cor:DPSSEigBounds} for any $N > k$ and $W = \tfrac{c}{\pi N}$ to obtain
and $$\lambda_k\left(N,\dfrac{c}{\pi N}\right) \le 10\exp\left[-\dfrac{k-\ceil{\tfrac{2c}{\pi}}-6}{\tfrac{2}{\pi^2}\log\left(\tfrac{100c}{\pi}+25\right)}\right].$$ Since this holds for all $N > k$, we have $$\widetilde{\lambda}_k(c) = \lim_{N \to \infty}\lambda_k\left(N,\dfrac{c}{\pi N}\right) \le 10\exp\left[-\dfrac{k-\ceil{\tfrac{2c}{\pi}}-6}{\tfrac{2}{\pi^2}\log\left(\tfrac{100c}{\pi}+25\right)}\right].$$

\subsection{Proof of PSWF eigenvalue sum bounds (Corollary~\ref{cor:PSWFEigSumBounds})}
For any integer $K$ such that $1 \le K \le \floor{\tfrac{2c}{\pi}}$, we can apply the lower bound for $\widetilde{\lambda}_k(c)$ in Corollary~\ref{cor:PSWFEigBounds} along with the inequality $\tfrac{e^{-x}}{1-e^{-x}} \le \tfrac{1}{x}$ for $x > 0$ to obtain
\begin{align*}
\sum_{k = 0}^{K-1}\left(1-\widetilde{\lambda}_k(c)\right) &\le \sum_{k = 0}^{K-1}10\exp\left[-\dfrac{\floor{\tfrac{2c}{\pi}}-k-7}{\tfrac{2}{\pi^2}\log\left(\tfrac{100c}{\pi}+25\right)}\right]
\\
&\le \sum_{k = -\infty}^{K-1}10\exp\left[-\dfrac{\floor{\tfrac{2c}{\pi}}-k-7}{\tfrac{2}{\pi^2}\log\left(\tfrac{100c}{\pi}+25\right)}\right]
\\
&= \dfrac{10\exp\left[-\dfrac{\floor{\tfrac{2c}{\pi}}-k-6}{\tfrac{2}{\pi^2}\log\left(\tfrac{100c}{\pi}+25\right)}\right]}{1-\exp\left[-\dfrac{1}{\tfrac{2}{\pi^2}\log\left(\tfrac{100c}{\pi}+25\right)}\right]}
\\
&\le \dfrac{20}{\pi^2}\log\left(\dfrac{100c}{\pi}+25\right)\exp\left[-\dfrac{\floor{\tfrac{2c}{\pi}}-K-7}{\tfrac{2}{\pi^2}\log\left(\tfrac{100c}{\pi}+25\right)}\right].
\end{align*}

Similarly, for any integer $K \ge \ceil{\tfrac{2c}{\pi}}$, we can apply the upper bound for $\widetilde{\lambda}_k(c)$ in Corollary~\ref{cor:PSWFEigBounds} along with the inequality $\tfrac{e^{-x}}{1-e^{-x}} \le \tfrac{1}{x}$ for $x > 0$ to obtain
\begin{align*}
\sum_{k = K}^{\infty}\widetilde{\lambda}_k(c) &\le \sum_{k = K}^{\infty}10\exp\left[-\dfrac{k-\ceil{\tfrac{2c}{\pi}}-6}{\tfrac{2}{\pi^2}\log\left(\tfrac{100c}{\pi}+25\right)}\right]
\\
&= \dfrac{10\exp\left[-\dfrac{k-\ceil{\tfrac{2c}{\pi}}-6}{\tfrac{2}{\pi^2}\log\left(\tfrac{100c}{\pi}+25\right)}\right]}{1-\exp\left[-\dfrac{1}{\tfrac{2}{\pi^2}\log\left(\tfrac{100c}{\pi}+25\right)}\right]}
\\
&\le \dfrac{20}{\pi^2}\log\left(\dfrac{100c}{\pi}+25\right)\exp\left[-\dfrac{K-\ceil{\tfrac{2c}{\pi}}-7}{\tfrac{2}{\pi^2}\log\left(\tfrac{100c}{\pi}+25\right)}\right].
\end{align*}

\section{Numerical Results}
\label{sec:NumericalResults}
We demonstrate the quality of our bounds on the width of the transition region $\#\{k : \eps < \lambda_k < 1-\eps\}$ with some numerical computations. First, we  fix $W = \tfrac{1}{4}$ (a large value of $W$), and for each integer $2^4 \le N \le 2^{16}$ we use the method described in \cite{Gruenbacher94} to compute $\lambda_k$ for a range $k_{\text{min}} \le k \le k_{\text{max}}$ such that $\lambda_{k_{\text{min}}} > 1-10^{-13}$ and $\lambda_{k_{\text{max}}} < 10^{-13}$. From this, we can determine $\#\{k : \eps < \lambda_k < 1-\eps\}$ for $\eps = 10^{-3}, 10^{-8}, 10^{-13}$. We plot $\#\{k : \eps < \lambda_k < 1-\eps\}$ as well as the upper bound on $\#\{k : \eps < \lambda_k < 1-\eps\}$ from Theorem~\ref{thm:BoundWithoutW} in Figure~\ref{fig:GapsizeVsN}. We note that over this range of parameters, the difference between the bound in Theorem~\ref{thm:BoundWithoutW} and the true width of the transition region $\#\{k : \eps < \lambda_k < 1-\eps\}$ is between $1$ and $14$.

\begin{center}
\begin{figure}[ht]
\includegraphics[scale=0.5]{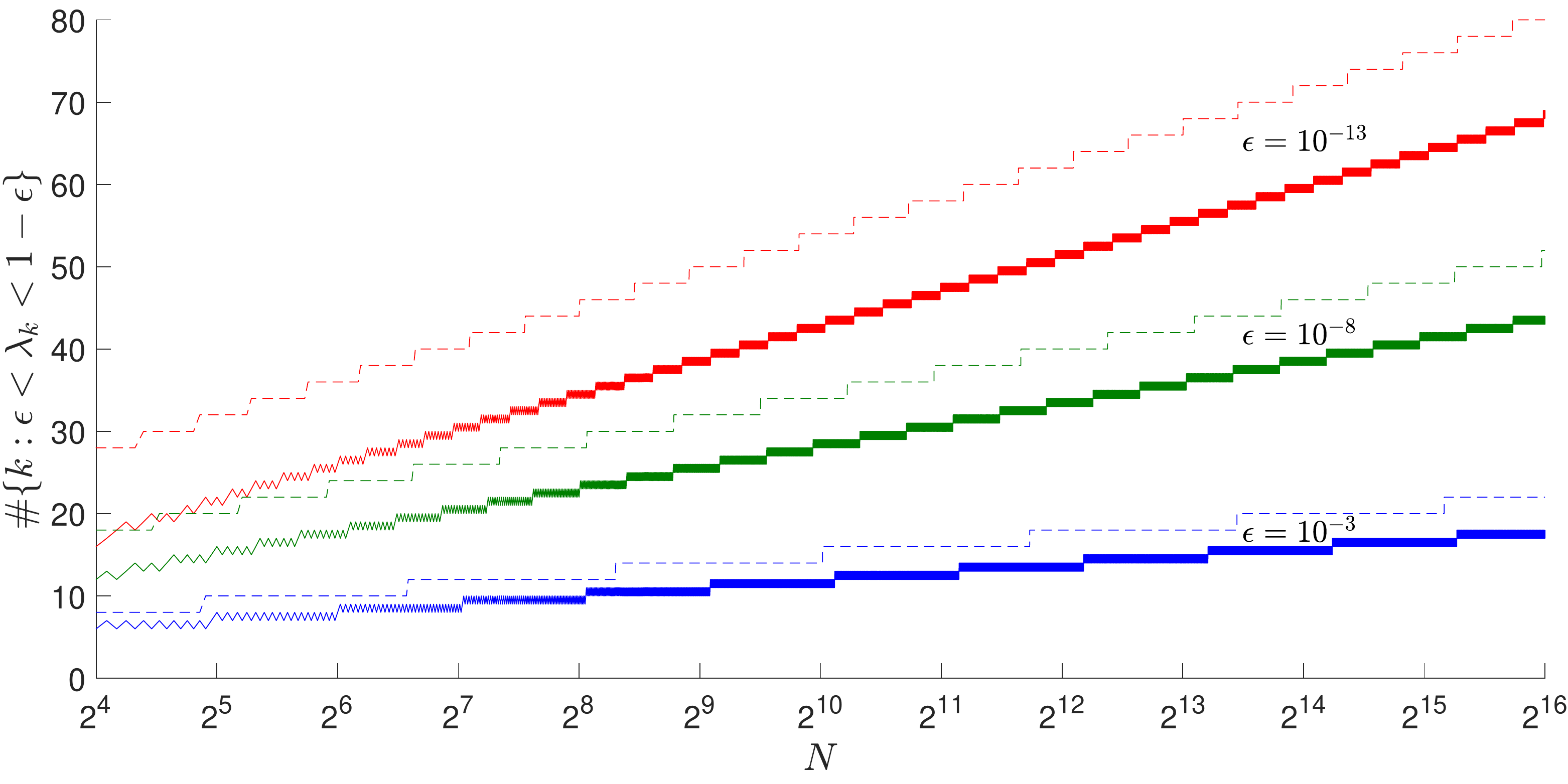}
\caption{Plots of the width of the transition region $\#\{k : \eps < \lambda_k < 1-\eps\}$ vs. $N$ where $W = \tfrac{1}{4}$ and $\eps = 10^{-3}$(blue), $\eps = 10^{-8}$(green), and $10^{-13}$(red) are fixed. The dashed lines indicate the upper bound from Theorem~\ref{thm:BoundWithoutW}.}
\label{fig:GapsizeVsN}
\end{figure}
\end{center}

Next, we  fix $N = 2^{16}$ and for $10001$ logarithmically spaced values of $W$ between $2^{-14}$ and $2^{-2}$, we use the method described in \cite{Gruenbacher94} to compute $\lambda_k$ for a range $k_{\text{min}} \le k \le k_{\text{max}}$ such that $\lambda_{k_{\text{min}}} > 1-10^{-13}$ and $\lambda_{k_{\text{max}}} < 10^{-13}$. From this, we can determine $\#\{k : \eps < \lambda_k < 1-\eps\}$ for $\eps = 10^{-3}, 10^{-8}, 10^{-13}$. We plot $\#\{k : \eps < \lambda_k < 1-\eps\}$ as well as the upper bound on $\#\{k : \eps < \lambda_k < 1-\eps\}$ from Theorem~\ref{thm:BoundWithW} in Figure~\ref{fig:GapsizeVsW}. We note that over this range of parameters, the difference between the bound in Theorem~\ref{thm:BoundWithoutW} and the true width of the transition region $\#\{k : \eps < \lambda_k < 1-\eps\}$ is between $\approx 9.8$ and $\approx 30.7$. 

\begin{center}
\begin{figure}
\includegraphics[scale=0.5]{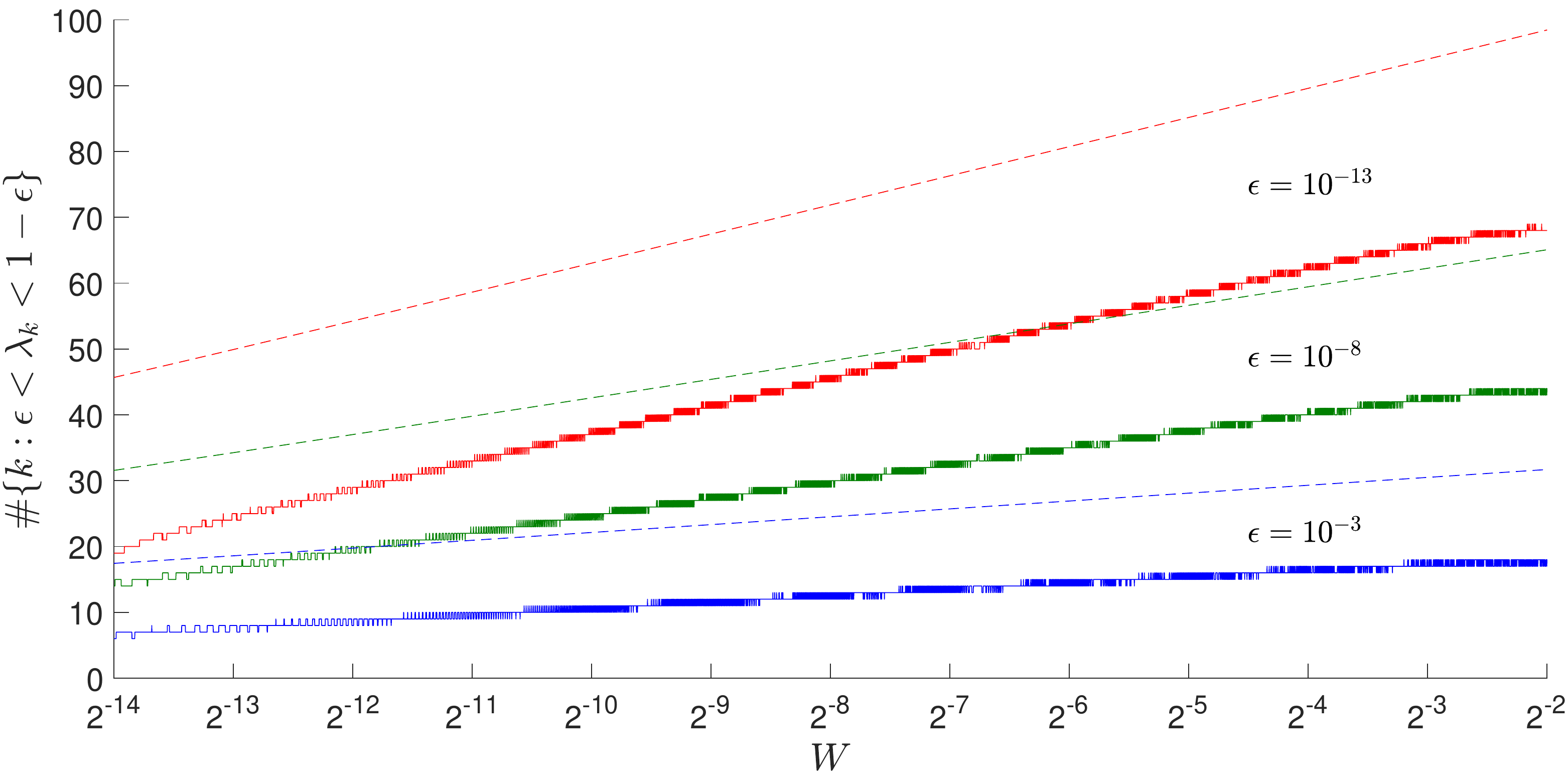}
\caption{Plots of the width of the transition region $\#\{k : \eps < \lambda_k < 1-\eps\}$ vs. $W$ where $N = 2^{16}$ and $\eps = 10^{-3}$(blue), $\eps = 10^{-8}$(green), and $10^{-13}$(red) are fixed. The dashed lines indicate the upper bound from \cref{thm:BoundWithW}.}
\label{fig:GapsizeVsW}
\end{figure}
\end{center}

In Figure~\ref{fig:GapsizeVsN}, we see that the plots of both $2\ceil{\tfrac{1}{\pi^2}\log(4N)\log(\tfrac{4}{\eps(1-\eps)})}$ (the bound in Theorem~\ref{thm:BoundWithoutW}) and the actual width of the transition region increase roughly linearly with $\log N$ and at roughly the same rate. However, the difference between the bound in Theorem~\ref{thm:BoundWithoutW} and the actual width of the transition region is noticeably larger for smaller values of $\eps$ than for larger values of $\eps$. This provides numerical evidence that for a large bandwidth $W$, the bound's dependence on $N$ is close to correct, but the dependence on $\eps$ has some room for improvement. 

In Figure~\ref{fig:GapsizeVsW}, we see that the plots of both $\tfrac{2}{\pi^2}\log(100NW+25)\log\left(\tfrac{5}{\eps(1-\eps)}\right)+7$ and the actual width of the transition region increase roughly linearly with $\log(NW)$ and at roughly the same rate. However, the difference between the bound in Theorem~\ref{thm:BoundWithW} and the actual width of the transition region is quite noticeable. This provides numerical evidence that the leading constant of $\tfrac{2}{\pi^2}$ is indeed correct, but that the other constants leave significant room for improvement. 

Finally, we note that for the range of parameters in both plots, the non-asymptotic bounds on the width of the transition region given by (\ref{eq:ZhuWakin}), (\ref{eq:Boulsane}), and (\ref{eq:FST}) (in Section~\ref{sec:PrevDPSSBounds}) would all be well above the range of the plots in Figures~\ref{fig:GapsizeVsN} and \ref{fig:GapsizeVsW}. The bounds in (\ref{eq:ZhuWakin}) and (\ref{eq:Boulsane}) are proportional to $\tfrac{1}{\eps(1-\eps)}$. Thus, they are only useful when $\eps$ is not too small. Also, the bound in (\ref{eq:FST}) is rather large since the leading constant $\tfrac{8}{\pi^2}$ being $4$ times larger than that in Theorems~\ref{thm:BoundWithoutW} and \ref{thm:BoundWithW}, and the trailing constant $12$ dominates $\tfrac{8}{\pi^2}\log(8N)$ when $N$ isn't too large. In particular, for $\eps = 10^{-3}$ and any $N \in \N$, if we impose the mild constraint that $NW \ge \tfrac{1}{2}$, then the bound in (\ref{eq:ZhuWakin}) is at least $\tfrac{4/\pi^2}{\eps(1-\eps)} \approx 405$, the bound in (\ref{eq:Boulsane}) is at least $\tfrac{0.45-1/6}{\eps(1-\eps)} \approx 283$, and the bound in (\ref{eq:FST}) is at least $(\tfrac{8}{\pi^2}\log(8)+12)\log(\tfrac{15}{\eps}) \approx 131$.

\frenchspacing
\bibliographystyle{unsrt}
\bibliography{bibfileSlepianEigenvalueBounds}

\appendix
\section{Zolotarev numbers}
\label{sec:Zolotarev}
In this section, we review some properties of Zolotarev numbers, which will be useful in our analysis in Appendix~\ref{sec:LowRankDisplacement}. With the exception of Corollary~\ref{cor:UnboundedIntervals}, all the results here have been proven elsewhere. However, we state these results and outline the proofs for sake of completeness. 

For any integer $k \ge 0$, we let $\mathcal{R}_{k,k}$ denote the set of rational functions $\varphi(z) = \tfrac{p(z)}{q(z)}$ such that $p(z)$ and $q(z)$ are polynomials with degree at most $k$. For any two disjoint, closed subsets of the Riemann sphere $E,F \subset \C \cup \{\infty\}$, the Zolotarev number $Z_k(E,F)$ is defined as $$Z_k(E,F) = \inf_{\varphi \in \mathcal{R}_{k,k}}\dfrac{\sup\limits_{z \in E}|\varphi(z)|}{\inf\limits_{z \in F}|\varphi(z)|}.$$ Note that any rational function $\varphi(z) = \tfrac{p(z)}{q(z)}$ can be extended to a continuous function on the Riemann sphere $\C \cup \{\infty\}$ by defining $\varphi(\infty) = \lim\limits_{|z| \to \infty}\varphi(z)$ and $\varphi(z) = \infty$ for any $z$ such that $q(z) = 0$.  

Beckermann and Townsend \cite{BT19} proved the following bound on the Zolotarev numbers for the intervals $E = [-b,-a]$ and $F = [a,b]$.
\begin{theorem}{\cite{BT19}}
\label{thm:ZolotarevBound}
For any reals $b > a > 0$, and any integer $k \ge 0$, $$Z_k([-b,-a],[a,b]) \le 4\exp\left[-\dfrac{\pi^2 k}{\log(\tfrac{4b}{a})} \right].$$
\end{theorem}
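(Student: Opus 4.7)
Plan: The approach exploits the Möbius invariance of Zolotarev numbers to reduce to a canonical form, and then invokes Zolotarev's classical solution for symmetric two-interval sets. First I would apply the conformal automorphism $z \mapsto z/b$ of $\widehat{\mathbb{C}}$. Under such a Möbius transformation the class $\mathcal{R}_{k,k}$ is preserved under composition and the infimum defining $Z_k$ is unchanged, so $Z_k([-b,-a],[a,b]) = Z_k([-1,-\alpha],[\alpha,1])$ with $\alpha = a/b \in (0,1)$. It therefore suffices to prove
\[
Z_k([-1,-\alpha],[\alpha,1]) \;\le\; 4\exp\!\left[-\dfrac{\pi^2 k}{\log(4/\alpha)}\right].
\]

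Next I would invoke Zolotarev's explicit solution to his ``third problem,'' which produces an exact extremal rational function $\varphi^\ast \in \mathcal{R}_{k,k}$ built from the Jacobi elliptic function $\mathrm{sn}$ with modulus $\alpha$. The extremal achieves
\[
\dfrac{\sup_{E}|\varphi^\ast|}{\inf_{F}|\varphi^\ast|} \;=\; \rho^{-2k}, \qquad \rho \;=\; \exp\!\left(\dfrac{\pi K(\alpha')}{2 K(\alpha)}\right),
\]
where $K$ is the complete elliptic integral of the first kind and $\alpha' = \sqrt{1-\alpha^2}$ is the complementary modulus; the quantity $\log \rho$ is (half) the conformal modulus of the condenser $([-1,-\alpha],[\alpha,1])$. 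Using $\varphi^\ast$ as a trial function immediately yields $Z_k([-1,-\alpha],[\alpha,1]) \le \rho^{-2k}$.

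The remaining task is to bound $\rho^{-2k}$ by $4\exp[-\pi^2 k/\log(4/\alpha)]$ uniformly in $\alpha \in (0,1)$. Equivalently, I must establish $\pi K(\alpha')/K(\alpha) \ge \pi^2/\log(4/\alpha)$, up to an additive slack of $(\log 4)/k$ in the exponent that is absorbed by the factor of $4$. This is a classical elliptic-integral inequality that follows from the asymptotic expansion $K(\alpha') = \log(4/\alpha) + O(\alpha^2 \log(1/\alpha))$ as $\alpha \to 0^+$, combined with the monotone bound $K(\alpha) \le \pi/2 + O(\alpha^2)$, giving $K(\alpha')/K(\alpha) \ge (2/\pi)\log(4/\alpha) - O(\alpha^2)$; rearranging yields the desired inequality.

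The main obstacle is making these elliptic-integral estimates \emph{uniform} over all $\alpha \in (0,1)$, not merely asymptotic near $\alpha = 0$. In the opposite regime where $\alpha$ is close to $1$, $\log(4/\alpha)$ is of order $1$, so $\exp[-\pi^2 k/\log(4/\alpha)]$ decays rapidly; at the same time, $\rho$ approaches $1$ and $\rho^{-2k}$ need not be small. This is precisely where the prefactor of $4$ does the work: a boundary verification using $Z_0 = 1 \le 4$ together with the submultiplicative property $Z_{k+k'}(E,F) \le Z_k(E,F)\, Z_{k'}(E,F)$ propagates the estimate up to all $k \ge 0$, while the small-$\alpha$ regime is handled by the elliptic asymptotics directly. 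A careful patching of the two regimes, quantifying the lower-order error terms, produces the stated constant $4$ uniformly.
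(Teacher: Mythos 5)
The theorem you are asked to prove is not proved in the paper at all; it is imported verbatim from Beckermann and Townsend \cite{BT19}, and the paper only gives a one-sentence gloss ("use theory of elliptic functions to construct a rational function $\varphi$ whose ratio $\sup_E|\varphi|/\inf_F|\varphi|$ realizes the bound"). Your sketch follows the same outline as the cited reference: reduce by a M\"{o}bius map to the symmetric pair $([-1,-\alpha],[\alpha,1])$, take Zolotarev's elliptic-function extremal as a trial function, and then estimate elliptic integrals. So at the level of strategy you are on the right track.

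However, the quantitative heart of your argument is wrong. You assert that the extremal for $([-1,-\alpha],[\alpha,1])$ achieves exactly $\rho^{-2k}$ with $\rho=\exp\bigl(\pi K(\alpha')/(2K(\alpha))\bigr)$, $\alpha'=\sqrt{1-\alpha^2}$, and then claim the inequality $\pi K(\alpha')/K(\alpha)\ge \pi^2/\log(4/\alpha)$ "follows by rearranging" the small-$\alpha$ asymptotics. That inequality is simply false for intermediate $\alpha$. Take $\alpha=\tfrac12$: $K(0.5)\approx 1.686$, $K(\sqrt{3}/2)\approx 2.157$, so $\pi K(\alpha')/K(\alpha)\approx 4.02$, while $\pi^2/\log 8\approx 4.75$. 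Since $\rho^{-2k}=e^{-4.02k}$ eventually exceeds $4e^{-4.75k}$, your intermediate bound $Z_k\le\rho^{-2k}$ cannot yield the target estimate. The problem is two-fold: (i) Zolotarev's exact extremal value is \emph{not} $\rho^{-2k}$ for any $\rho$ of that form --- the exact answer is $\bigl(\tfrac{1-\sqrt{\ell}}{1+\sqrt{\ell}}\bigr)^2$ where $\ell$ is determined by a Landen-type descent relation $K(\ell')/K(\ell)=k\,K(\alpha')/K(\alpha)$; and (ii) the elliptic modulus attached to the condenser $([-1,-\alpha],[\alpha,1])$ is tied to $(1-\alpha)/(1+\alpha)$, not to $\alpha$ directly (they are related by a Landen transformation, which is why the factors of $2$ in the exponent and the constant $4$ are so delicate in \cite{BT19}). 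Your "patching via submultiplicativity" does not rescue this, since the counterexample sits in the interior of $(0,1)$, not at the boundary; moreover, \cite{BT19} does not patch --- they establish the elliptic-integral inequality uniformly over the whole range by a careful monotonicity argument. If you want to carry this out yourself, you should start from the exact Zolotarev formula (e.g.\ Akhiezer) and track the Landen descent, or simply cite \cite{BT19} as the paper does.
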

The proof of Theorem~\ref{thm:ZolotarevBound} involves using theory of elliptic functions to construct a rational function $\varphi \in \mathcal{R}_{k,k}$ for which $\dfrac{\sup\limits_{z \in [-b,-a]}|\varphi(z)|}{\inf\limits_{z \in [a,b]}|\varphi(z)|} \le 4\exp\left[-\dfrac{\pi^2 k}{\log(\tfrac{4b}{a})} \right].$

A fact about Zolotarev numbers is that they are invariant under invertible M\"{o}bius transforms \cite{Akhiezer90}. 
\begin{lemma} 
\label{lem:Mobius}
For any two disjoint, closed subsets of the Riemann sphere $E,F \subset \C \cup \{\infty\}$ and any M\"{o}bius transform $\phi(z) = \dfrac{\beta_1 z + \beta_2}{\beta_3 z + \beta_4}$ such that $\beta_1\beta_4 \neq \beta_2\beta_3$, we have $Z_k(\phi(E),\phi(F)) = Z_k(E,F)$ for all integers $k \ge 0$.
\end{lemma}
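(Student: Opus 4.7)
The plan is to exhibit a composition map $\psi \mapsto \psi \circ \phi$ which is a bijection of $\mathcal{R}_{k,k}$ onto itself, and then use it to change variables in the definition of the Zolotarev number. Since $\phi$ is a homeomorphism of the Riemann sphere which sends $E$ and $F$ bijectively to $\phi(E)$ and $\phi(F)$, the values attained by a rational function on $\phi(E)$ and $\phi(F)$ are exactly the values attained by its precomposition with $\phi$ on $E$ and $F$.

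First, I would verify that composition with $\phi$ preserves $\mathcal{R}_{k,k}$. Given $\psi(w) = p(w)/q(w)$ with $p,q$ polynomials of degree at most $k$, writing $\phi(z) = (\beta_1 z + \beta_2)/(\beta_3 z + \beta_4)$, every monomial $w^j$ with $j \le k$ becomes $(\beta_1 z + \beta_2)^j(\beta_3 z + \beta_4)^{-j}$. Multiplying both $p(\phi(z))$ and $q(\phi(z))$ by $(\beta_3 z + \beta_4)^k$ shows that $\psi \circ \phi$ can be written as the ratio of two polynomials in $z$ each of degree at most $k$, i.e.\ $\psi \circ \phi \in \mathcal{R}_{k,k}$. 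The nondegeneracy condition $\beta_1\beta_4 \neq \beta_2\beta_3$ guarantees that $\phi$ has a M\"{o}bius inverse $\phi^{-1}$, and the same reasoning applied to $\phi^{-1}$ shows $\psi \mapsto \psi \circ \phi$ is a bijection on $\mathcal{R}_{k,k}$ with inverse $\varphi \mapsto \varphi \circ \phi^{-1}$.

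For the main step, I would then observe that for any $\psi \in \mathcal{R}_{k,k}$, the substitution $w = \phi(z)$ gives
\[
\frac{\sup_{w \in \phi(E)}|\psi(w)|}{\inf_{w \in \phi(F)}|\psi(w)|} \;=\; \frac{\sup_{z \in E}|(\psi\circ\phi)(z)|}{\inf_{z \in F}|(\psi\circ\phi)(z)|}.
\]
Taking the infimum over $\psi \in \mathcal{R}_{k,k}$ on the left gives $Z_k(\phi(E),\phi(F))$, while on the right, since $\psi\circ\phi$ ranges over all of $\mathcal{R}_{k,k}$ by the bijection established above, the infimum equals $Z_k(E,F)$. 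Hence the two Zolotarev numbers agree.

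I do not expect any step to be a serious obstacle, as this is essentially a bookkeeping argument. The only subtlety is that $\phi$ or $\psi$ may take the value $\infty$ at points of $E$, $F$, or their images; this is handled by the convention (stated in the paragraph preceding the lemma) of extending rational functions continuously to the Riemann sphere, under which $\phi$ is a homeomorphism of $\mathbb{C}\cup\{\infty\}$ and $\phi(E),\phi(F)$ remain disjoint and closed.
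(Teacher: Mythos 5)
Your proposal is correct and follows the same approach the paper uses: the paper's one-sentence proof observes that $\varphi^*$ is extremal for $(\phi(E),\phi(F))$ if and only if $\varphi^*\circ\phi$ is extremal for $(E,F)$, which is precisely the change of variables via the bijection $\psi\mapsto\psi\circ\phi$ of $\mathcal{R}_{k,k}$ onto itself that you establish. Your writeup simply fills in the details the paper leaves implicit (the degree count under precomposition and the handling of $\infty$).
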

This fact is easily proved by noting that $\varphi^* \in \mathcal{R}_{k,k}$ is the extremal rational function for $(\phi(E),\phi(F))$ if and only if $\varphi^* \circ \phi \in \mathcal{R}_{k,k}$ is the extremal rational function for $(E,F)$.

Using this fact, Beckermann and Townsend proved the following bound on the Zolotarev numbers for two non-overlapping intervals.
\begin{cor}{\cite{BT19}}
\label{cor:BoundedIntervals}
For any two intervals $[c_1,c_2]$ and $[d_1,d_2]$ that are nonoverlapping, and any integer $k \ge 0$, $$Z_k([c_1,c_2],[d_1,d_2]) \le 4\exp\left[-\dfrac{\pi^2 k}{\log(16\gamma)} \right] \quad \text{where} \quad \gamma = \dfrac{(d_1-c_1) (d_2-c_2)}{(d_2-c_1)(d_1-c_2)}.$$
\end{cor}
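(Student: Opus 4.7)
The strategy is to reduce to the symmetric case already handled by \cref{thm:ZolotarevBound} via the Möbius invariance in \cref{lem:Mobius}. Without loss of generality, relabel so that $c_1 < c_2 < d_1 < d_2$; writing $\alpha = c_2 - c_1$, $\beta = d_1 - c_2$, $\delta = d_2 - d_1$ (all positive) and expanding shows $\gamma = 1 + \frac{\alpha\delta}{\beta(\alpha + \beta + \delta)} > 1$. I will exhibit $b > a > 0$ and a real Möbius transform $\phi$ with $\phi(c_1) = -b$, $\phi(c_2) = -a$, $\phi(d_1) = a$, $\phi(d_2) = b$ that maps each source interval onto the corresponding target interval.

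A Möbius transform is determined by its action on three points and preserves the cross-ratio. A $\phi$ realizing the assignment above therefore exists iff the source and target cross-ratios agree, i.e.\
$$\gamma \;=\; \frac{(d_1 - c_1)(d_2 - c_2)}{(d_2 - c_1)(d_1 - c_2)} \;=\; \frac{(-b - a)(-a - b)}{(-b - b)(-a - a)} \;=\; \frac{(1 + r)^2}{4 r},$$
where $r := b/a$. This is a quadratic in $r$ whose root exceeding $1$ is $r = 2\gamma - 1 + 2\sqrt{\gamma^2 - \gamma}$, which is real and well-defined because $\gamma > 1$. Declaring $\phi$ to send $(c_1, c_2, d_1) \mapsto (-b, -a, a)$ pins down $\phi$ uniquely; because all six points are real, $\phi$ has real coefficients, and cross-ratio invariance forces $\phi(d_2) = b$ automatically.

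The subtle point is to verify that $\phi$ sends each closed interval onto its intended image interval rather than onto its complement on $\R \cup \{\infty\}$, since Möbius transforms can wrap the real line through the point at infinity. A real Möbius transform has at most one pole on $\R$ and is monotone on any interval not containing that pole. Since $\phi$ takes the finite values $-b, -a$ at the endpoints of $[c_1, c_2]$, continuity forbids a pole inside $[c_1, c_2]$, and the same argument handles $[d_1, d_2]$. Hence $\phi$ is a homeomorphism of each closed interval onto the line segment joining its endpoints' images, so $\phi([c_1, c_2]) = [-b, -a]$ and $\phi([d_1, d_2]) = [a, b]$. Applying \cref{lem:Mobius} followed by \cref{thm:ZolotarevBound} now yields
$$Z_k([c_1, c_2], [d_1, d_2]) \;=\; Z_k([-b, -a], [a, b]) \;\le\; 4\exp\!\left[-\frac{\pi^2 k}{\log(4 r)}\right].$$
To finish, note that $\sqrt{\gamma^2 - \gamma} = \gamma\sqrt{1 - 1/\gamma} < \gamma$, so $r < 4\gamma - 1 < 4\gamma$ and $\log(4r) < \log(16\gamma)$, upgrading the displayed bound to the form stated in the corollary. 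I expect the interval-versus-complement verification to be the main obstacle; everything else reduces to cross-ratio and quadratic-formula bookkeeping.
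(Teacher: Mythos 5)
Your proof follows the same route as the paper's: reduce via the M\"obius invariance in Lemma~\ref{lem:Mobius} to the symmetric case of Theorem~\ref{thm:ZolotarevBound}, with the target ratio $r = 2\gamma - 1 + 2\sqrt{\gamma^2-\gamma}$ (the paper's $\alpha$), and finish with $r \le 4\gamma$. Your cross-ratio and algebra bookkeeping is fine, and the observation that $Z_k$ is symmetric under swapping $E$ and $F$ licenses the relabeling to $c_1 < c_2 < d_1 < d_2$.

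However, the step you yourself flagged as the subtle one is not correctly justified. You claim that because $\phi$ takes the finite values $-b,-a$ at $c_1,c_2$, ``continuity forbids a pole inside $[c_1,c_2]$.'' That inference is false as stated: the map $z \mapsto 1/z$ takes finite values at $\pm 1$ yet has a pole at $0 \in (-1,1)$, and $\phi([-1,1])$ is then the complement of $(-1,1)$ in $\R \cup \{\infty\}$ rather than $[-1,1]$. Nothing in ``finite values at the endpoints'' prevents this; you would be assuming the continuity on the closed interval that you are trying to establish. What actually excludes the pole from $[c_1,d_2]$ is that $\phi$ is an orientation-preserving homeomorphism of the circle $\R\cup\{\infty\}$ (since it sends the increasing quadruple $c_1<c_2<d_1<d_2$ to the increasing quadruple $-b<-a<a<b$ with a real positive cross-ratio), so $\phi^{-1}(\infty)$ must lie on the arc from $d_2$ to $c_1$ that passes through $\infty$, i.e.\ outside $[c_1,d_2]$. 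Equivalently: if the pole $z_0$ were in $(c_1,c_2)$, then $\phi(c_1)$ would lie on the opposite side of $\phi(\infty)$ from $\phi(c_2),\phi(d_1),\phi(d_2)$, contradicting $-b<-a<a<b$. The paper avoids the issue entirely by exhibiting $\phi$ as the composition $\phi_2^{-1}\circ\phi_1$ of two concrete M\"obius maps whose images of $[c_1,c_2]$ and $[d_1,d_2]$ can be verified by direct substitution, so there is no pole-location argument to make. Either fix renders your proof complete, but as written this step is a genuine gap.
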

\begin{proof}
It is trivial to check that $\gamma > 1$ when $[c_1,c_2]$ and $[d_1,d_2]$ do not overlap. Now, set $\alpha = 2\gamma - 1 + 2\sqrt{\gamma^2-\gamma}$ and define the M\"{o}bius transforms $$\phi_1(z) = \dfrac{(d_2-d_1)(z-c_2)}{(d_2-c_2)(z-d_1)} \quad \text{and} \quad \phi_2(z) = \dfrac{(\alpha-1)(z+1)}{(\alpha+1)(z-1)}.$$ One can check that $\phi_1([c_1,c_2]) = [0,\tfrac{\gamma-1}{\gamma}] = [0,\tfrac{(\alpha-1)^2}{(\alpha+1)^2}] = \phi_2([-\alpha,-1])$ and $\phi_1([d_1,d_2]) = [1,\infty] = \phi_2([1,\alpha])$, and that both $\phi_1$ and $\phi_2$ are bijections. Thus, the M\"{o}bius transform $\phi = \phi_2^{-1} \circ \phi_1$ satisfies $\phi([c_1,c_2]) = [-\alpha,-1]$ and $\phi([d_1,d_2]) = [1,\alpha]$. So by applying Theorem~\ref{thm:ZolotarevBound}, Lemma~\ref{lem:Mobius}, and the bound $\alpha = 2\gamma - 1 + 2\sqrt{\gamma^2-\gamma} \le 4\gamma$, we have $$Z_k([c_1,c_2],[d_1,d_2]) = Z_k([-\alpha,-1],[1,\alpha]) \le 4\exp\left[-\dfrac{\pi^2 k}{\log(4\alpha)} \right] \le 4\exp\left[-\dfrac{\pi^2 k}{\log(16\gamma)} \right].$$
\end{proof}

In a nearly identical manner, we can also prove the following bound.
\begin{cor}
\label{cor:UnboundedIntervals}
For any real numbers $c_1 < d_1 < d_2 < c_2$, and any integer $k \ge 0$, $$Z_k([-\infty,c_1]\cup[c_2,\infty],[d_1,d_2]) \le 4\exp\left[-\dfrac{\pi^2 k}{\log(16\gamma)} \right] \quad \text{where} \quad \gamma = \dfrac{(c_2-d_1)(d_2-c_1)}{(c_2-d_2)(d_1-c_1)}.$$
\end{cor}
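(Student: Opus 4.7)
The plan is to mirror the strategy used for Corollary~\ref{cor:BoundedIntervals}. The conceptual point is that on the Riemann sphere, $E := (-\infty,c_1]\cup[c_2,\infty)$ is a single connected arc of the real line, namely the one joining $c_1$ and $c_2$ through $\infty$, and $F := [d_1,d_2]$ is a disjoint arc. Topologically the configuration (two disjoint arcs on a circle on the sphere) is identical to that of Corollary~\ref{cor:BoundedIntervals}, and since Zolotarev numbers are invariant under M\"obius transforms (Lemma~\ref{lem:Mobius}), any M\"obius transform that moves $\infty$ to a finite point will reduce us to the bounded-interval case already handled.

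The cleanest execution is a direct reduction. Pick any $a \in (c_1,d_1)$, e.g. $a = (c_1+d_1)/2$, and set $\phi(z) = 1/(z-a)$. Then $\phi(\infty)=0$, and a short case-check using monotonicity of $\phi$ on each of the two half-lines $(-\infty,a)$ and $(a,\infty)$ shows that $\phi(E) = [\tfrac{1}{c_1-a},\tfrac{1}{c_2-a}]$ (a bounded closed interval containing $0$, formed by gluing the images of $(-\infty,c_1]$ and $[c_2,\infty)$ at the image of $\infty$), while $\phi(F) = [\tfrac{1}{d_2-a},\tfrac{1}{d_1-a}]$ is a bounded closed interval lying strictly to the right of $\phi(E)$. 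By Lemma~\ref{lem:Mobius}, $Z_k(E,F) = Z_k(\phi(E),\phi(F))$, and Corollary~\ref{cor:BoundedIntervals} now applies to the right-hand side, yielding a bound of the form $4\exp[-\pi^2k/\log(16\gamma')]$ where $\gamma'$ is the ratio from Corollary~\ref{cor:BoundedIntervals} computed for the four image endpoints.

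The only calculation to perform is verifying $\gamma' = \gamma$. Using the identity $\phi(x)-\phi(y) = (y-x)/((x-a)(y-a))$, every factor of the form $(\ast - a)$ appears an equal number of times in the numerator and denominator of $\gamma'$ and therefore cancels, leaving precisely
$$\gamma' \;=\; \frac{(c_2-d_1)(d_2-c_1)}{(c_2-d_2)(d_1-c_1)} \;=\; \gamma.$$
Conceptually this is nothing more than M\"obius-invariance of the cross-ratio of the four endpoints $c_1,d_1,d_2,c_2$, so no miracle is required. Substituting into the bound from Corollary~\ref{cor:BoundedIntervals} gives exactly the claimed inequality. I do not anticipate any real obstacle: the argument is essentially bookkeeping, and the main thing to be careful about is confirming that $\phi(E)$ really does sit to the left of $\phi(F)$ (so that the two images form a non-overlapping pair in the sense required by Corollary~\ref{cor:BoundedIntervals}) and that the cancellation in $\gamma'$ is carried out correctly.
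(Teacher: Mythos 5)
Your proof is correct, and it takes a cleaner route than the paper's. The paper proves Corollary~\ref{cor:UnboundedIntervals} by essentially repeating the construction in Corollary~\ref{cor:BoundedIntervals}: it sets $\alpha = 2\gamma-1+2\sqrt{\gamma^2-\gamma}$, builds two explicit M\"{o}bius maps $\phi_1,\phi_2$ whose composition sends $E$ to $[-\alpha,-1]$ and $F$ to $[1,\alpha]$, and then invokes Theorem~\ref{thm:ZolotarevBound} and the bound $\alpha \le 4\gamma$ directly, in parallel to (rather than by citing) the bounded-interval case. You instead compose with the single elementary map $\phi(z)=1/(z-a)$ for $a\in(c_1,d_1)$, which moves $\infty$ to $0$ and turns $E$ and $F$ into two bounded, disjoint intervals, and then invoke Corollary~\ref{cor:BoundedIntervals} as a black box. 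The only thing left to check is that the parameter $\gamma'$ produced by that corollary matches the $\gamma$ in the statement, which is exactly the M\"{o}bius-invariance of the cross-ratio of the four endpoints; your cancellation of the $(\ast-a)$ factors carries this out correctly, giving $\gamma'=\gamma$. Both proofs are short, but yours avoids re-deriving the $\alpha$-construction and makes the logical dependence on the already-proved bounded case explicit, while the paper's has the (minor) advantage of not depending on Corollary~\ref{cor:BoundedIntervals} at all. Either is acceptable; yours is arguably the more economical argument.
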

\begin{proof}
Again, since $c_1 < d_1 < d_2 < c_2$, we have $\gamma > 1$. Now, set $\alpha = 2\gamma - 1 + 2\sqrt{\gamma^2-\gamma}$ and define the M\"{o}bius transforms $$\phi_1(z) = \dfrac{(d_2-d_1)(z-c_1)}{(d_2-c_1)(z-d_1)} \quad \text{and} \quad \phi_2(z) = \dfrac{(\alpha-1)(z+1)}{(\alpha+1)(z-1)}.$$ One can check that $\phi_1([-\infty,c_1]\cup[c_2,\infty]) = [0,\tfrac{\gamma-1}{\gamma}] = [0,\tfrac{(\alpha-1)^2}{(\alpha+1)^2}] = \phi_2([-\alpha,-1])$ and $\phi_1([d_1,d_2]) = [1,\infty] = \phi_2([1,\alpha])$, and that both $\phi_1$ and $\phi_2$ are bijections. Thus, the M\"{o}bius transform $\phi = \phi_2^{-1} \circ \phi_1$ satisfies $\phi([-\infty,c_1]\cup[c_2,\infty]) = [-\alpha,-1]$ and $\phi([d_1,d_2]) = [1,\alpha]$. So by applying Theorem~\ref{thm:ZolotarevBound}, Lemma~\ref{lem:Mobius}, and the bound $\alpha = 2\gamma - 1 + 2\sqrt{\gamma^2-\gamma} \le 4\gamma$, we have $$Z_k([-\infty,c_1]\cup[c_2,\infty],[d_1,d_2]) = Z_k([-\alpha,-1],[1,\alpha]) \le 4\exp\left[-\dfrac{\pi^2 k}{\log(4\alpha)} \right] \le 4\exp\left[-\dfrac{\pi^2 k}{\log(16\gamma)} \right].$$
\end{proof}

\section{Singular values of matrices with low rank displacement}
\label{sec:LowRankDisplacement}
With the exception of Theorem~\ref{thm:UnboundedSpec}, the results in this section have all been proven elsewhere. Furthermore, the proof of Theorem~\ref{thm:UnboundedSpec} is very similar to that of Theorem~\ref{thm:BoundedSpec}. However, we state these results and the proof of Theorem~\ref{thm:UnboundedSpec} for sake of completeness.

Throughout this section, we suppose that $\mX \in \C^{M \times N}$ satisfies the displacement equation $$\mC\mX-\mX\mD = \mU\mV^*,$$ where $\mC \in \C^{M \times M}$ and $\mD \in \C^{N \times N}$ are normal matrices, and $\mU \in \C^{M \times \nu}$ and $\mV \in \C^{N \times \nu}$ (where it is understood that $\nu \ll \min\{M,N\}$ for the results in this section to be useful). Our goal is to show that $\mX$ is approximately low-rank under certain assumptions on $\Spec(\mC)$ and $\Spec(\mD)$. 

Beckermann and Townsend \cite{BT19} showed that the numerical rank of $\mX$ can be bounded in terms of Zolotarev numbers.
\begin{theorem}{\cite{BT19}}
\label{thm:SingularValueDecay}
If $\Spec(\mC) \subset E$ and $\Spec(\mD) \subset F$, then the singular values of $\mX$ satisfy $$\sigma_{\nu k+j}(\mX) \le \sigma_j(\mX)Z_k(E,F)$$ for any integers $j \ge 1$, $k \ge 0$.
\end{theorem}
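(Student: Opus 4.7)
\textbf{Proof plan for Theorem~\ref{thm:SingularValueDecay}.} The plan is to show that $\mX$ is the sum of a matrix of rank at most $\nu k$ and a matrix whose $j$-th singular value is controlled by $\sigma_j(\mX)$ times $Z_k(E,F)$. Fix any $\varphi = p/q \in \mathcal{R}_{k,k}$ whose poles avoid $\Spec(\mC)\cup\Spec(\mD)$ and whose zeros avoid $\Spec(\mD)$; since $\mC,\mD$ are normal, $\varphi(\mC)$ and $\varphi(\mD)$ are well defined by the spectral calculus and $\varphi(\mD)$ is invertible. Starting from the displacement equation, I would first establish the key rank bound
$$\rank\bigl(\varphi(\mC)\mX - \mX\varphi(\mD)\bigr) \le \nu k.$$
Rearranging this identity then yields
$$\mX \;=\; \varphi(\mC)\mX\varphi(\mD)^{-1} \;-\; \bigl(\varphi(\mC)\mX - \mX\varphi(\mD)\bigr)\varphi(\mD)^{-1} \;=:\; \mE + \mR,$$
where $\mR$ has rank at most $\nu k$ and $\mE = \varphi(\mC)\mX\varphi(\mD)^{-1}$.

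The rank bound is the main technical step. My approach would be to reduce to the polynomial case first: telescoping
$$\mC^i\mX - \mX\mD^i \;=\; \sum_{\ell=0}^{i-1}\mC^{\ell}\mU\mV^{\ast}\mD^{i-1-\ell}$$
shows that for any polynomial $p$ of degree at most $k$, the column space of $p(\mC)\mX-\mX p(\mD)$ lies in $\Span\{\mC^{\ell}\mU : 0\le \ell \le k-1\}$, which has dimension at most $\nu k$. For the rational case, I would multiply the identity $\varphi(\mC)\mX - \mX\varphi(\mD) = q(\mC)^{-1}[p(\mC)\mX q(\mD) - q(\mC)\mX p(\mD)]q(\mD)^{-1}$ and work in the joint spectral basis of $\mC$ and $\mD$: the $(i,j)$-entry of the bracket equals $s(c_i,d_j)(\mU\mV^{\ast})_{ij}$ where $s(c,d):=[p(c)q(d)-q(c)p(d)]/(c-d)$ is a polynomial of bidegree at most $(k-1,k-1)$, using the displacement equation $(c_i-d_j)\mX_{ij} = \widetilde{\mU}_i\widetilde{\mV}_j^{\ast}$. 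Expanding $s$ monomially then exhibits the bracket as an element of $\Span\{\mC^{\alpha}q(\mC)^{-1}\mU\}_{\alpha=0}^{k-1}$ applied to a row-space factor, giving the desired rank bound of $\nu k$.

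Once the decomposition $\mX=\mE+\mR$ is in hand, the rest is routine. Since $\rank(\mR)\le \nu k$, the Weyl-type inequality $\sigma_{r+s-1}(A+B)\le \sigma_r(A)+\sigma_s(B)$ with $r=\nu k+1$ and $s=j$ gives
$$\sigma_{\nu k+j}(\mX) \;\le\; \sigma_{\nu k+1}(\mR) + \sigma_j(\mE) \;=\; \sigma_j(\mE).$$
Submultiplicativity of singular values yields $\sigma_j(\mE)\le \|\varphi(\mC)\|\,\sigma_j(\mX)\,\|\varphi(\mD)^{-1}\|$. Normality of $\mC$ and $\mD$ makes these operator norms equal to $\max_{z\in\Spec(\mC)}|\varphi(z)|$ and $(\min_{z\in\Spec(\mD)}|\varphi(z)|)^{-1}$ respectively, which are bounded by $\sup_{z\in E}|\varphi(z)|$ and $(\inf_{z\in F}|\varphi(z)|)^{-1}$. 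Combining,
$$\sigma_{\nu k+j}(\mX) \;\le\; \sigma_j(\mX)\cdot \frac{\sup_{z\in E}|\varphi(z)|}{\inf_{z\in F}|\varphi(z)|}.$$
Taking the infimum over admissible $\varphi\in\mathcal{R}_{k,k}$ yields $\sigma_{\nu k+j}(\mX)\le \sigma_j(\mX)\,Z_k(E,F)$, completing the proof. The hard part is confirming that the rational-function analogue of the polynomial rank bound still holds with the same constant $\nu k$, rather than degrading to $\nu k^2$; this is where the factorization through the bivariate polynomial $s(c,d)$ with its single column-index $\alpha$ is essential.
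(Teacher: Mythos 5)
Your proof is correct and follows essentially the same route as the paper's sketch of the Beckermann--Townsend argument: establish that the rational displacement $\varphi(\mC)\mX-\mX\varphi(\mD)$ has rank at most $\nu k$ (which you verify via the bivariate divided-difference polynomial $s(c,d)$ in the joint spectral basis, the key step the paper only cites), then bound $\sigma_{\nu k+j}(\mX)$ by $\|\varphi(\mC)\|\,\sigma_j(\mX)\,\|\varphi(\mD)^{-1}\|$ and optimize over $\varphi\in\mathcal{R}_{k,k}$. Your use of the Weyl inequality $\sigma_{\nu k+j}(\mE+\mR)\le\sigma_j(\mE)+\sigma_{\nu k+1}(\mR)$ is an exact bookkeeping equivalent of the paper's construction of the rank-$(\nu k+j-1)$ matrix $\mY=\mR+\varphi(\mC)\mX_{j-1}\varphi(\mD)^{-1}$, so no substantive difference.
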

The proof involves showing that for any rational function $\varphi \in \mathcal{R}_{k,k}$, we can construct a rank-$(\nu k+j-1)$ matrix $\mY$ such that $$\mX-\mY = \varphi(\mC)(\mX-\mX_{j-1})\varphi(\mD)^{-1}$$ where $\mX_{j-1}$ is the best rank-$(j-1)$ approximation to $\mX$. Then, by applying the facts that $$\|\varphi(\mC)\| \le \sup\limits_{z \in E}|\varphi(z)|, \quad \|\varphi(\mD)^{-1}\| \le \sup\limits_{z \in F}|\varphi(z)^{-1}| = \left(\inf\limits_{z \in F}|\varphi(z)|\right)^{-1}, \quad \text{and} \quad \|\mX-\mX_{j-1}\| = \sigma_j(\mX)$$ along with the submultiplicativity of the matrix norm, we obtain $$\sigma_{\nu k+j}(\mX) \le \|\mX-\mY\| \le \|\varphi(\mC)\| \cdot \|\mX-\mX_{j-1}\| \cdot \|\varphi(\mD)^{-1}\| \le \sigma_j(\mX) \cdot \dfrac{\sup\limits_{z \in E}|\varphi(z)|}{\inf\limits_{z \in F}|\varphi(z)|}.$$ This bound holds for any $\varphi \in \mathcal{R}_{k,k}$. Taking the infimum over all $\varphi \in \mathcal{R}_{k,k}$ yields $\sigma_{\nu k+j}(\mX) \le \sigma_j(\mX)Z_k(E,F)$.

By combining Theorem~\ref{thm:SingularValueDecay} (with $j = 1$) along with Corollary~\ref{cor:BoundedIntervals}, Beckermann and Townsend established the following result.
\begin{theorem}{\cite{BT19}}
\label{thm:BoundedSpec}
If $\text{Spec}(\mC) \subset [c_1,c_2]$ and $\text{Spec}(\mD) \subset [d_1,d_2]$ where $[c_1,c_2]$ and $[d_1,d_2]$ are nonoverlapping, then for any integer $k \ge 0$, $$\sigma_{\nu k+1}(\mX) \le 4\|\mX\|\exp\left[-\dfrac{\pi^2 k}{\log(16\gamma)}\right] \quad \text{where} \quad \gamma = \dfrac{(d_1-c_1) (d_2-c_2)}{(d_2-c_1)(d_1-c_2)}.$$
\end{theorem}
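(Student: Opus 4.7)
The plan is to combine the two prerequisite results already developed in this appendix: Theorem~\ref{thm:SingularValueDecay}, which reduces the problem of bounding singular values of a matrix with low displacement rank to a Zolotarev approximation problem on the spectra of $\mC$ and $\mD$, and Corollary~\ref{cor:BoundedIntervals}, which provides an explicit exponential bound on the Zolotarev number $Z_k$ for two disjoint compact intervals in $\R$.

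First I would specialize Theorem~\ref{thm:SingularValueDecay} to $j=1$, using $\sigma_1(\mX) = \|\mX\|$. The hypotheses $\Spec(\mC) \subset [c_1,c_2]$ and $\Spec(\mD) \subset [d_1,d_2]$ allow us to take $E = [c_1,c_2]$ and $F = [d_1,d_2]$, so that
\[
\sigma_{\nu k + 1}(\mX) \;\le\; \|\mX\| \cdot Z_k\bigl([c_1,c_2],\,[d_1,d_2]\bigr)
\]
for every integer $k \ge 0$.

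Next I would invoke Corollary~\ref{cor:BoundedIntervals} with the two nonoverlapping intervals $[c_1,c_2]$ and $[d_1,d_2]$ (nonoverlapping by assumption), which yields
\[
Z_k\bigl([c_1,c_2],\,[d_1,d_2]\bigr) \;\le\; 4\exp\!\left[-\frac{\pi^2 k}{\log(16\gamma)}\right],
\]
with $\gamma = \tfrac{(d_1-c_1)(d_2-c_2)}{(d_2-c_1)(d_1-c_2)}$, exactly matching the $\gamma$ in the theorem statement. Substituting this into the previous display gives the desired bound.

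There is essentially no obstacle: all of the analytical work has already been carried out in Theorem~\ref{thm:SingularValueDecay} (where a rational function $\varphi \in \mathcal{R}_{k,k}$ is used to approximate $\mX$ by a rank-$(\nu k)$ matrix of the form $\varphi(\mC)\mX\varphi(\mD)^{-1}$ corrected by a sum of $\nu k$ rank-one terms arising from $\mU\mV^*$) and in Corollary~\ref{cor:BoundedIntervals} (where a M\"obius transform maps the two intervals to $[-\alpha,-1]$ and $[1,\alpha]$ so that Theorem~\ref{thm:ZolotarevBound} applies). The only item to verify is the bookkeeping of the parameter $\gamma$, which is immediate from comparing the two formulas.
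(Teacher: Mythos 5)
Your proposal is correct and matches the paper's own argument exactly: the paper states that Theorem~\ref{thm:BoundedSpec} follows by combining Theorem~\ref{thm:SingularValueDecay} with $j=1$ (so $\sigma_1(\mX)=\|\mX\|$) together with Corollary~\ref{cor:BoundedIntervals}, which is precisely the two-step substitution you carry out.
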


Finally, by combining Theorem~\ref{thm:SingularValueDecay} (with $j = 1$) along with Corollary~\ref{cor:UnboundedIntervals}, we obtain Theorem~\ref{thm:UnboundedSpec} (stated in Section~\ref{sec:ProofThm1}).

\section{Polynomial approximations of the sinc function}
\label{sec:SincFunction}
For a bandwidth parameter $W > 0$, we define the sinc function $$g(t) = \dfrac{\sin(2\pi Wt)}{\pi t} \quad \text{for} \quad t \in \R.$$
First, we prove the following bound on the derivatives of $g(t)$.
\begin{lemma}
\label{lem:SincDerivatives}
For any non-negative integer $k$, $$\left|g^{(k)}(t)\right| \le (2\pi W)^k\min\left\{\dfrac{2W}{k+1} , \dfrac{2}{\pi|t|} \right\} \quad \text{for all} \quad t \in \R.$$ 
\end{lemma}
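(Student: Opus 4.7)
The plan is to use the Fourier representation of the sinc function to handle the two bounds separately, as each requires a different technique.

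First, I would write $g(t)=\int_{-W}^{W}e^{\j 2\pi f t}\,df$, so that differentiating under the integral gives
$$g^{(k)}(t)=\int_{-W}^{W}(\j 2\pi f)^k e^{\j 2\pi f t}\,df.$$
Taking absolute values inside the integral immediately yields the first bound:
$$|g^{(k)}(t)|\le \int_{-W}^{W}(2\pi|f|)^k\,df=\frac{2(2\pi W)^{k+1}}{2\pi(k+1)}=(2\pi W)^k\cdot\frac{2W}{k+1}.$$

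For the second bound, direct integration by parts is messy because the ``integration'' variable in the Fourier representation is $f$, not $t$. Instead, I would use the identity $\pi t\,g(t)=\sin(2\pi W t)$ and differentiate $k$ times with the Leibniz rule. Since the function $\pi t$ has vanishing derivatives of order $\ge 2$, only two terms survive, giving
$$\pi t\,g^{(k)}(t)+k\pi\,g^{(k-1)}(t)=(2\pi W)^k\sin\!\left(2\pi W t+\tfrac{k\pi}{2}\right).$$
Hence $|\pi t\,g^{(k)}(t)|\le(2\pi W)^k+k\pi|g^{(k-1)}(t)|$. Applying the already-established first bound to $g^{(k-1)}$ gives $k\pi|g^{(k-1)}(t)|\le k\pi\cdot(2\pi W)^{k-1}\cdot\tfrac{2W}{k}=(2\pi W)^k$, and therefore $|\pi t\,g^{(k)}(t)|\le 2(2\pi W)^k$, which rearranges to $|g^{(k)}(t)|\le(2\pi W)^k\cdot\tfrac{2}{\pi|t|}$. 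The case $k=0$ is handled separately and is immediate from $|\sin(2\pi W t)|\le 1$.

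Taking the minimum of the two bounds establishes the lemma. The only genuinely nontrivial step is the second bound, and the Leibniz trick on $\pi t\,g(t)=\sin(2\pi W t)$ is the key observation. No substantial obstacle is expected; the argument is largely a bookkeeping exercise once one decides to combine the Fourier representation (for the first bound) with the algebraic identity and induction (for the second bound).
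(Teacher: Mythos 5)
Your proof is correct, and it diverges from the paper's in a genuinely interesting way on the second inequality. Both you and the paper establish the first bound $(2\pi W)^k\cdot\tfrac{2W}{k+1}$ identically, by writing $g^{(k)}(t)=\int_{-W}^{W}(\j 2\pi f)^k e^{\j 2\pi ft}\,df$ and applying the triangle inequality. For the second bound the paper stays in the frequency domain and performs one integration by parts in $f$ (producing the boundary terms $\tfrac{(\j2\pi W)^ke^{\pm\j 2\pi Wt}}{\j2\pi t}$ plus a residual integral that is then estimated), whereas you instead exploit the algebraic identity $\pi t\,g(t)=\sin(2\pi Wt)$, apply the Leibniz rule (only two terms survive because $\tfrac{d^2}{dt^2}[\pi t]=0$), and bootstrap by feeding the already-proved first bound for $g^{(k-1)}$ back in. Your route is more elementary --- no integration by parts, and the bookkeeping is lighter --- and it also makes transparent why the two inequalities in the lemma are naturally linked rather than independent facts. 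It does require $k\ge 1$ (to invoke $g^{(k-1)}$), so you correctly flag $k=0$ as a separate case, just as the paper does; the constants produced by the two arguments are the same. One small note: the bound you obtain from the Leibniz identity, $|\pi t\,g^{(k)}(t)|\le 2(2\pi W)^k$, matches the paper's $\tfrac{2(2\pi W)^k}{\pi|t|}$ exactly, so nothing is lost in sharpness.
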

\begin{proof}
For $k = 0$, we can apply the inequality $|\sin \theta| \le \min\{|\theta|,2\}$ to obtain $|g(t)| \le \min\{2W,\tfrac{2}{\pi |t|}\}$. Hence, we can proceed with the case where $k \ge 1$. Note that we can write the sinc function as $$g(t) = \dfrac{\sin(2\pi Wt)}{\pi t} = \int_{-W}^{W}e^{j2\pi ft}\,df.$$ By differentiating under the integral sign $k$ times, we obtain $$g^{(k)}(t) = \dfrac{d^k}{dt^k}\left[\int_{-W}^{W}e^{j2\pi ft}\,df\right] = \int_{-W}^{W}\dfrac{d^k}{dt^k}\left[e^{j2\pi ft}\right]\,df = \int_{-W}^{W}(j2\pi f)^ke^{j2\pi ft}\,df.$$ Applying the triangle inequality yields the bound $$\left|g^{(k)}(t)\right| = \left|\int_{-W}^{W}(j2\pi f)^ke^{j2\pi ft}\,df\right| \le \int_{-W}^{W}\left|(j2\pi f)^ke^{j2\pi ft}\right|\,df = \int_{-W}^{W}(2\pi|f|)^k\,df = \dfrac{(2\pi W)^{k+1}}{\pi(k+1)}.$$
Alternatively, we can use integration by parts before applying the triangle inequality to obtain \begin{align*}\left|g^{(k)}(t)\right| &= \left|\int_{-W}^{W}(j2\pi f)^ke^{j2\pi ft}\,df\right| 
\\
&= \left|\left[(j2\pi f)^k\dfrac{e^{j2\pi ft}}{j 2\pi t}\right]_{f = -W}^{f = W} - \int_{-W}^{W}j2\pi k(j2\pi f)^{k-1}\dfrac{e^{j2\pi ft}}{j 2\pi t}\,df\right|
\\
&= \left|\dfrac{(j2\pi W)^ke^{j2\pi Wt}}{j2\pi t}-\dfrac{(-j2\pi W)^ke^{-j2\pi Wt}}{j2\pi t} - \int_{-W}^{W}j2\pi k(j2\pi f)^{k-1}\dfrac{e^{j2\pi ft}}{j 2\pi t}\,df\right|
\\
&\le \left|\dfrac{(j2\pi W)^ke^{j2\pi Wt}}{j2\pi t}\right| + \left|\dfrac{(-j2\pi W)^ke^{-j2\pi Wt}}{j2\pi t}\right| + \left|\int_{-W}^{W}j2\pi k(j2\pi f)^{k-1}\dfrac{e^{j2\pi ft}}{j 2\pi t}\,df\right|
\\
&\le \left|\dfrac{(j2\pi W)^ke^{j2\pi Wt}}{j2\pi t}\right| + \left|\dfrac{(-j2\pi W)^ke^{-j2\pi Wt}}{j2\pi t}\right| + \int_{-W}^{W}\left|j2\pi k(j2\pi f)^{k-1}\dfrac{e^{j2\pi ft}}{j 2\pi t}\right|\,df
\\
&= \dfrac{(2\pi W)^k}{2\pi |t|} + \dfrac{(2\pi W)^k}{2\pi |t|} + \int_{-W}^{W}\dfrac{k}{|t|}(2\pi|f|)^{k-1}\,df
\\
&= \dfrac{(2\pi W)^k}{\pi |t|} + \dfrac{(2\pi W)^k}{\pi |t|}
\\
&= \dfrac{2(2\pi W)^k}{\pi |t|}.
\end{align*}
Combining the two bounds yields $$\left|g^{(k)}(t)\right| \le \min\left\{\dfrac{(2\pi W)^{k+1}}{\pi(k+1)} , \dfrac{2(2\pi W)^k}{\pi|t|} \right\} = (2\pi W)^k\min\left\{\dfrac{2W}{k+1} , \dfrac{2}{\pi|t|} \right\}.$$
\end{proof}

We finish this section by noting a well-known theorem on Chebyshev interpolation. 
\begin{theorem}\cite{SuliMayers03}
\label{thm:ChebyshevInterp}
Suppose $g \in C^k[a,b]$ for some positive integer $k$. Define the Chebyshev interpolating polynomial of degree $k-1$ by $$P_k(t) = \sum_{m = 1}^{k}g(t_m)\prod_{\substack{m' = 1,\ldots,k \\ m' \neq m}}\dfrac{t-t_{m'}}{t_m-t_{m'}}$$ where $$t_m = \dfrac{b+a}{2} + \dfrac{b-a}{2}\cos\left(\dfrac{2m-1}{2k}\pi\right) \quad \text{for} \quad m = 1,\ldots,k$$ are the Chebyshev nodes used for interpolation. Then, for any $t \in [a,b]$, we have $$\left|g(t)-P_k(t)\right| \le \dfrac{(b-a)^k}{2^{2k-1}k!}\max_{\xi \in [a,b]}\left|g^{(k)}(\xi)\right|.$$ 
\end{theorem}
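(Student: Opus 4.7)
This is the classical Chebyshev interpolation error bound, so the plan is to combine the standard polynomial interpolation remainder formula with the well-known extremality of the Chebyshev polynomial $T_k$.

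First I would invoke the Lagrange remainder formula for polynomial interpolation: for any $g \in C^k[a,b]$ and any $k$ distinct nodes $t_1,\ldots,t_k \in [a,b]$, if $P_k$ is the interpolating polynomial of degree at most $k-1$ matching $g$ at these nodes, then for every $t \in [a,b]$ there exists $\xi = \xi(t) \in [a,b]$ with
\[
g(t) - P_k(t) = \frac{g^{(k)}(\xi)}{k!}\,\omega(t), \qquad \omega(t) := \prod_{m=1}^{k}(t - t_m).
\]
The standard derivation applies Rolle's theorem repeatedly to the auxiliary function $F(s) = g(s) - P_k(s) - \bigl[g(t) - P_k(t)\bigr]\,\omega(s)/\omega(t)$ (for any fixed $t \notin \{t_1,\ldots,t_k\}$; the equality at the nodes is trivial), which has $k+1$ zeros in $[a,b]$ and hence $F^{(k)}$ has a zero. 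Taking absolute values yields $|g(t) - P_k(t)| \le \frac{1}{k!}\max_{\xi \in [a,b]}|g^{(k)}(\xi)|\cdot|\omega(t)|$.

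Next, I would bound $\sup_{t \in [a,b]}|\omega(t)|$ by identifying $\omega$ with a rescaled Chebyshev polynomial. Under the affine change of variables $t = \tfrac{a+b}{2} + \tfrac{b-a}{2}x$ with $x \in [-1,1]$, the Chebyshev nodes $t_m$ map exactly to the zeros $\cos\tfrac{(2m-1)\pi}{2k}$ of $T_k$. Since $\omega(t)$ is monic in $t$ of degree $k$ and $T_k$ has leading coefficient $2^{k-1}$ in $x$, matching leading coefficients under $\tfrac{d}{dt} = \tfrac{2}{b-a}\tfrac{d}{dx}$ gives the identity
\[
\omega(t) = \frac{(b-a)^{k}}{2^{2k-1}}\, T_k\!\left(\frac{2t-(a+b)}{b-a}\right).
\]
Combined with the elementary bound $|T_k(x)| \le 1$ for $x \in [-1,1]$, this immediately yields $\sup_{t \in [a,b]}|\omega(t)| \le (b-a)^{k}/2^{2k-1}$.

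Combining the two estimates produces exactly the stated bound. There is no real obstacle here: both pieces are standard, and the only bookkeeping step is the leading-coefficient calculation that produces the factor $(b-a)^{k}/2^{2k-1}$ in the Chebyshev-to-monic conversion. (Equivalently one could cite the extremal property that the monic Chebyshev polynomial minimizes the sup norm among all monic polynomials of degree $k$ on $[-1,1]$, with minimum value $2^{-(k-1)}$, and then rescale to $[a,b]$.)
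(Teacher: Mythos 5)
Your proof is correct, and it is the standard textbook argument. The paper does not actually prove Theorem~\ref{thm:ChebyshevInterp} but simply cites it to S\"uli and Mayers \cite{SuliMayers03}; the argument you give (Lagrange remainder via repeated Rolle, then the monic-Chebyshev sup-norm bound $\|\omega\|_\infty = (b-a)^k/2^{2k-1}$ after the affine change of variables) is precisely the one found in that reference and in essentially every numerical analysis text, so there is nothing in the paper to compare it against beyond the citation itself. One small bookkeeping note: your identity $\omega(t) = \tfrac{(b-a)^k}{2^{2k-1}}T_k\!\left(\tfrac{2t-(a+b)}{b-a}\right)$ is an equality, not merely an inequality, so the subsequent $\sup_t|\omega(t)| \le (b-a)^k/2^{2k-1}$ is in fact an equality as well; this does not affect the conclusion.
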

We will use both Lemma~\ref{lem:SincDerivatives} and Theorem~\ref{thm:ChebyshevInterp} in Section~\ref{sec:ProofThm2} to prove Theorem~\ref{thm:BoundWithW}.

\end{document}